\newtheorem{theorem}{Theorem}[section]
\newtheorem{thm}[theorem]{Theorem}
\newtheorem{lemma}[theorem]{Lemma}
\newtheorem{lem}[theorem]{Lemma}
\newtheorem{prop}[theorem]{Proposition}
\newtheorem{coro}[theorem]{Corollary}
\newtheorem{corollary}[theorem]{Corollary}
\theoremstyle{definition}
\newtheorem{defi}[theorem]{Definition}
\newtheorem{exa}[theorem]{Example}
\newtheorem{remark}[theorem]{Remark}
 \theoremstyle{plain}
\newtheorem*{namedthm}{\namedthmname}
\newcounter{namedthm}
\newenvironment{named}[1]
  {\def\namedthmname{#1}%
   \refstepcounter{namedthm}%
   \namedthm\def\@currentlabel{#1}}
  {\endnamedthm}
 \newcommand{\B}{\mathbb B}
 \newcommand{\R}{\mathbb R}
 \newcommand{\C}{\mathbb C}
  \newcommand{\PP}{\mathbb P}
 \newcommand{\N}{\mathbb N}
 \newcommand{\e}{\varepsilon}
 \newcommand{\capa}{{\rm Cap}}
 \newcommand{\f}{\varphi}
 \newcommand{\p}{\psi}
 \newcommand \al {\alpha}
 \newcommand \psh {{\rm PSH}}
 \newcommand \PSH {{\rm PSH}}
  \newcommand \SH {{\rm SH}}
 \newcommand \MA {{\rm MA}}
 \newcommand \Ent{{\rm Ent}}
 \numberwithin{equation}{section}
\subjclass[2010]{32W20, 32U05, 32Q15, 35A23}
\keywords{Monge-Amp\`ere  energy, entropy, Moser-Trudinger inequality}
\begin{document}

\title[Finite entropy vs finite energy]{Finite entropy vs finite energy}
\author{Eleonora Di Nezza, Vincent Guedj, Chinh H. Lu}

\thanks{The authors are partially supported by the ANR project CRACK}

\address{Institut de Math\'ematiques de Jussieu \\ Sorbonne Universit\'e \\ 4 place Jussieu \\  75005 Paris, France}

\email{\href{mailto:eleonora.dinezza@imj-prg.fr}{eleonora.dinezza@imj-prg.fr}}
\urladdr{\href{https://sites.google.com/site/edinezza/home}{https://sites.google.com/site/edinezza/home}}

\address{Institut de Math\'ematiques de Toulouse   \\ Universit\'e de Toulouse \\
118 route de Narbonne \\
31400 Toulouse, France\\}

\email{\href{mailto:vincent.guedj@math.univ-toulouse.fr}{vincent.guedj@math.univ-toulouse.fr}}
\urladdr{\href{https://www.math.univ-toulouse.fr/~guedj}{https://www.math.univ-toulouse.fr/~guedj/}}

\address{Universit\'e Paris-Saclay, CNRS, Laboratoire de Math\'ematiques d'Orsay, 91405, Orsay, France.}

\email{\href{mailto:hoang-chinh.lu@universite-paris-saclay.fr}{hoang-chinh.lu@universite-paris-saclay.fr}}
\urladdr{\href{https://www.imo.universite-paris-saclay.fr/~lu/}{https://www.imo.universite-paris-saclay.fr/~lu/}}
\date{\today}

 \begin{abstract}
 Probability measures with either finite Monge-Amp\`ere energy or finite  entropy  have played a central role in recent developments
in K\"ahler geometry.
In this note we make a systematic study of quasi-plurisubharmonic potentials 
whose Monge-Amp\`ere measures have finite entropy. 
We show that these potentials  belong to the finite energy class ${\mathcal E}^{\frac{n}{n-1}}$,
where $n$ denotes the complex dimension, and
 provide examples showing that this critical exponent is sharp. 
Our proof relies on refined
Moser-Trudinger  inequalities
for quasi-plurisubharmonic functions. 
\end{abstract}

 \maketitle


\section*{Introduction}

Probability measures with either finite energy \cite{BEGZ10,BBGZ13} or finite entropy \cite{BBEGZ19} have played an important role in recent developments
in K\"ahler geometry (see \cite{BBJ15, BDL16, BDL17, ChCh1, ChCh2, DonICM18, BBEGZ19} and the references therein).

Indeed the search for K\"ahler-Einstein metrics on Fano manifolds  boils down to maximizing the Ding functional whose leading term is a Monge-Amp\`ere energy, while a constant scalar curvature K\"ahler metric minimizes the Mabuchi functional, whose leading term is an entropy.
The purpose of this note is to systematically compare these two notions.

\smallskip

Let $(X,\omega)$ be a compact K\"ahler manifold of complex dimension $n \geq 1$, normalized so that
$$
{\rm Vol}_{\omega}(X):=\int_X \omega^n =1.
$$

We consider $\mu=f \omega^n$, $0 \leq f$,  a probability measure with finite entropy
$$
0 \leq {\rm Ent}_{\omega^n} (\mu):=\int_X f \log f\; \omega^n <+\infty.
$$
 Since $\mu$ is absolutely continuous with respect to the volume form $\omega^n$, it is in particular ``non-pluripolar"
 hence it follows from \cite{GZ07,DiwJFA09} that there exists a unique 
 full mass potential $\f \in {\mathcal E}(X,\omega)$ such that
 $\sup_X \f=0$ and
 $$
 (\omega+dd^c \f)^n=\mu.
 $$
 
 Here $d=\partial+\overline{\partial}$ and $d^c=\frac{i}{2\pi } \left( \partial-\overline{\partial} \right)$ are real operators so that
 $dd^c = \frac{i}{\pi} \partial \overline{\partial}$,  and
 ${\mathcal E}(X,\omega)$ denotes the set of $\omega$-plurisubharmonic functions $\f$
 whose non pluripolar Monge-Amp\`ere measure $(\omega+dd^c \f)^n$ is a probability measure.
 We refer the reader to Section \ref{sec:prelim} for a precise definition.
 We 
 consider, for $p>0$,
 $$
 {\mathcal E}^p(X,\omega):=\{ \f \in {\mathcal E}(X,\omega)\; | \;  E_p(\varphi)<+\infty \},
 $$
 where $E_p(\varphi):= \int_X |\varphi|^p (\omega+dd^c \varphi)^n$.
 
  It has been observed in \cite[Theorem 2.17]{BBEGZ19} that 
 $$
\Ent(X,\omega) \subset  {\mathcal E}^1(X,\omega),
 $$
 and the injection $\Ent(X,\omega) \hookrightarrow  {\mathcal E}^1(X,\omega)$ is compact, 
where $\Ent(X,\omega)$ is the set of $\omega$-psh functions whose Monge-Amp\`ere measure has finite entropy.
 However all computable examples suggest that $\f$ actually belongs to a higher energy class ${\mathcal E}^p(X,\omega)$ for some $p>1$ depending on the dimension. 
 We confirm this experimental observation by showing the following:

\begin{named}{Theorem A} 
Let $\mu=(\omega+dd^c \f)^n=f \omega^n$ be a probability measure
with finite entropy
 $
  {\rm Ent}_{\omega^n} (\mu)=\int_X f \log f \omega^n<+\infty.
  $
    Then
     $$ 
     \f \in  {\mathcal E}^{\frac{n}{n-1}}(X,\omega).
  $$
 Moreover the inclusion ${\rm Ent}(X,\omega) \hookrightarrow {\mathcal E}^{p}(X,\omega)$ is compact for any $p<\frac{n}{n-1}$. 

 This exponent is sharp when $n \geq 2$.  If $n=1$ then $\f$ is   continuous, hence it belongs to
${\mathcal E}^p(X,\omega)$ for all $p>0$.
\end{named}

The case of Riemann surfaces  deserves a special treatment:
 finite entropy potentials turn out to be bounded (and even continuous), but this is no longer the case in higher dimension. The proof of Theorem A   relies on a Moser-Trudinger inequality which provides a strong integrability property of finite energy potentials. This is the content of our second main result:

 \begin{named}{Theorem B}
 Fix $p>0$. There exist positive constants $c,C>0$  depending on 
 $ X,\omega, n,p$ such that, for all $\varphi\in \mathcal{E}^p(X,\omega)$ with $\sup_X \varphi=-1$,   
 	\[
 	\int_X \exp \left ( c |E_p(\varphi)|^{-1/n} |\varphi|^{1+ \frac{p}{n}} \right ) \omega^n \leq C. 
 	\]
\end{named} 
%
\medskip 

Theorem B is an interesting variant of Trudinger's inequality on compact K\"ahler manifolds.
 The case $p=1$ settles a conjecture of Aubin (called Hypoth\`ese fondamentale \cite{Aub84}) which is motivated by the search for K\"ahler-Einstein metrics on Fano manifolds.  The conjecture was previously proved by Berman-Berndtsson \cite{BB11} under the assumption that the cohomology class of $\omega$ is the first Chern class of an ample holomorphic line bundle. 

 We also establish  local versions of these results, valid in any bounded hyperconvex domain of $\C^n$.

 \medskip 
 
 \noindent {\bf Organization of the paper.} 
  We recall the definition of finite energy classes in Section \ref{sec:prelim} where we also give explicit examples 
  of finite entropy potentials. We then establish a Moser-Trudinger inequality in Section \ref{sect: MT inequality}, proving Theorem B.  We show in Proposition \ref{prop: Lmu continuous} that finite entropy measures act continuously on $\PSH(X,\omega)$ endowed with the $L^1$-topology. In  the special case of compact Riemann surfaces  the latter is equivalent to the potential being continuous (see Section \ref{sect: dimension 1}).  Theorem A  will be proved in Section \ref{sect: higher dimension}. 
   We finally treat the case of bounded hyperconvex domains by 
  analyzing the size of the capacity of sublevel sets
   in Section \ref{sec:local}, where we also briefly discuss an extension of 
   the celebrated Moser-Trudinger-Adams inequalities.

\medskip

 \noindent {\bf Acknowledgement.}    We thank S\'ebastien Boucksom and Mattias Jonsson for raising this interesting question. 
 We also thank Bo Berndtsson 
 for useful discussions on this subject. We thank Ahmed Zeriahi for carefully reading the first version of the note and giving numerous fruitful comments.

\section{Preliminaries}\label{sec:prelim}

In the whole paper $(X,\omega)$ is a compact K\"ahler manifold of complex dimension $n\in \N^*$.
We assume $\omega$ is normalized so that $\int_X \omega^n=1$.

 \subsection{Envelopes of quasi-psh functions}
Recall that a function is quasi-pluri\-subharmonic (\emph{qpsh} for short) if it is locally given as the sum of  a smooth and a psh function.   Quasi-psh functions
$\f:X \rightarrow \R \cup \{-\infty\}$ satisfying
$$
\omega+dd^c \f \geq 0
$$
in the weak sense of currents are called $\omega$-psh functions. 

In particular quasi-psh  functions are upper semi-continuous and Lebesgue-integrable.
They are actually in $L^p$ for all $p \geq 1$, and the induced topologies are all equivalent.
%

\begin{defi}
We let $\psh(X,\omega)$ denote the set of all $\omega$-psh functions which are not identically $-\infty$.  
\end{defi}

The set $\psh(X,\omega)$ is a closed subset of $L^1(X)$
for the $L^1$-topology. It was proved by Demailly \cite{Dem92} (see also \cite{BK07}) that 
any $\omega$-psh function
can be approximated by a decreasing sequence of smooth $\omega$-psh functions.

\begin{defi}
A set $P \subset X$ is called pluripolar if $P \subset \{\f=-\infty\}$ for some
quasi-psh function $\f$.
\end{defi}

Pluripolar sets 
are the ``small sets" of pluripotential theory.
In the definition above one can further assume that $\f$ is $\omega$-psh. 
There are many ways to characterize pluripolar sets, we refer the
 reader to the book \cite{GZbook}.

\begin{defi}
Given a Lebesgue measurable function $h:X \rightarrow \R$, its $\omega$-psh envelope is defined by
$$
P(h):= \left ( \sup \left\{ u \;  |  \; u \in \PSH(X,\omega) \text{ and } u \leq h \text{ on X } \right\} \right)^*.
$$
\end{defi}

Here $^*$ denotes the upper semi-continuous regularization. The function $P(h)$ is either identically $-\infty$ or it belongs to $\PSH(X,\omega)$.

\begin{thm} \label{thm:BD}
 \cite{BD12}
If $h$ is smooth then
$P(h)$ has bounded Laplacian, $\omega+dd^c h\geq 0$ on the contact set $\{P(h)=h\}$, and  
$$
(\omega+dd^c P(h))^n=1_{\{P(h)=h\}} (\omega+dd^c h)^n.
$$
\end{thm}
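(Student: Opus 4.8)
The plan is to realize $P(h)$ as a limit of smooth solutions of a family of complex Monge-Amp\`ere equations and to transfer uniform a priori estimates to the limit. For each $\beta>0$, Aubin-Yau theory provides a unique smooth $u_\beta\in\PSH(X,\omega)$ solving
\[
(\omega+dd^c u_\beta)^n = e^{\beta(u_\beta-h)}\,\omega^n ,
\]
the exponential right-hand side being automatically a normalized density since $\int_X(\omega+dd^c u_\beta)^n=1$. A first maximum principle observation is that at a point $x_0$ where $u_\beta-h$ is maximal one has $0\le\omega+dd^c u_\beta(x_0)\le\omega+dd^c h(x_0)$, so taking determinants bounds the density there and yields $u_\beta\le h+\beta^{-1}\log C$; comparison arguments then give a uniform sup-bound and the convergence $u_\beta\to P(h)$ as $\beta\to+\infty$.

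The technical heart, and the step I expect to be the main obstacle, is a uniform Laplacian estimate $-n\le\Delta_\omega u_\beta\le C$ with $C$ independent of $\beta$, where $\Delta_\omega$ is the $\omega$-Laplacian (the lower bound being just $\omega+dd^c u_\beta\ge0$). The upper bound is delicate because differentiating the equation produces a term $\beta\,dd^c(u_\beta-h)$ that a priori blows up with $\beta$. I would run the Aubin-Yau second order computation on the quantity $\log\mathrm{tr}_\omega(\omega+dd^c u_\beta)-B(u_\beta-h)$ for a large constant $B$ depending only on the holomorphic bisectional curvature of $\omega$ and on $\sup_X|u_\beta|$: at an interior maximum the dangerous $\beta$-terms arising from the equation are absorbed by the $-B(u_\beta-h)$ correction, and the smoothness of $h$ enters only through $\|h\|_{C^2}$. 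Letting $\beta\to+\infty$, the bound $\Delta_\omega P(h)\le C$ survives, so $P(h)$ has bounded Laplacian; together with $\omega+dd^c P(h)\ge0$ this places $P(h)$ in $C^{1,1}$ and gives a well-defined Monge-Amp\`ere measure with bounded density against $\omega^n$.

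It then remains to identify this measure. Since $P(h)$ is continuous, $\{P(h)<h\}$ is open, and I would show by balayage that $(\omega+dd^c P(h))^n=0$ there: if the measure were positive on a small ball $B\Subset\{P(h)<h\}$, solving the homogeneous Dirichlet problem $(\omega+dd^c w)^n=0$ on $B$ with $w=P(h)$ on $\partial B$ would produce (for $B$ small, keeping $w\le h$) a competitor $\max(P(h),w)\ge P(h)$ still lying below $h$, contradicting the definition of the envelope unless $w=P(h)$, i.e. unless the measure already vanished on $B$. Hence $(\omega+dd^c P(h))^n$ is carried by the contact set $K:=\{P(h)=h\}$.

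Finally, on $K$ I would invoke the elementary real-analysis fact that for a $C^{1,1}$ function $w\ge0$ one has $D^2w=0$ almost everywhere on $\{w=0\}$. Applying it to $w:=h-P(h)\ge0$, whose zero set is exactly $K$, gives $dd^c P(h)=dd^c h$ almost everywhere on $K$. This simultaneously forces $\omega+dd^c h=\omega+dd^c P(h)\ge0$ a.e. on $K$, which is the asserted positivity on the contact set, and shows that the two densities coincide there. Combining with the vanishing off $K$ yields
\[
(\omega+dd^c P(h))^n = 1_{\{P(h)=h\}}\,(\omega+dd^c h)^n ,
\]
as claimed.
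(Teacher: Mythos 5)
You should first note that the paper itself gives no proof of Theorem \ref{thm:BD}: it is imported from Berman--Demailly \cite{BD12}, with the surrounding text only remarking that the positivity of $\omega+dd^c h$ on the contact set is elementary (\cite[Page 46]{BD12}, \cite[Proof of Proposition 1.3]{EGZ11}) and that Berman \cite{Berm19} later gave an alternative, PDE-based proof. What you propose is essentially that alternative route: the zero-temperature limit $u_\beta\to P(h)$ of the solutions of $(\omega+dd^c u_\beta)^n=e^{\beta(u_\beta-h)}\omega^n$. Your Step 1 (existence and the sup-estimate $u_\beta\le h+\beta^{-1}\log C$), the balayage argument on the open set $\{P(h)<h\}$, and the use of the a.e.\ Hessian identity $D^2(h-P(h))=0$ on the contact set are all sound in outline, modulo one overstatement: a uniform bound on $\Delta_\omega P(h)$ together with $\omega+dd^cP(h)\ge 0$ controls the mixed complex derivatives only, hence gives $P(h)\in W^{2,p}$ for all $p<\infty$ (by Calder\'on--Zygmund), not $C^{1,1}$; fortunately $W^{2,p}$ suffices for the a.e.\ vanishing of the Hessian on $\{h-P(h)=0\}$ and for identifying the Monge-Amp\`ere measure with its pointwise density.

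The genuine gap is in the step you call the technical heart. With $\omega_\beta:=\omega+dd^cu_\beta$, $F:=\beta(u_\beta-h)$ and $\Delta'$ the Laplacian of $\omega_\beta$, the Aubin--Yau inequality gives $\Delta'\log\mathrm{tr}_\omega(\omega_\beta)\ge \frac{\Delta_\omega F}{\mathrm{tr}_\omega\omega_\beta}-C_0\,\mathrm{tr}_{\omega_\beta}\omega$, and the $\beta$-term equals $\beta\bigl(1-\frac{\mathrm{tr}_\omega(\omega+dd^ch)}{\mathrm{tr}_\omega\omega_\beta}\bigr)$, which is negative of size comparable to $\beta$ wherever $\mathrm{tr}_\omega\omega_\beta$ is small. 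The correction $-B(u_\beta-h)$ contributes $-Bn+B\,\mathrm{tr}_{\omega_\beta}(\omega+dd^ch)$, a quantity independent of $\beta$: no fixed constant $B$ can absorb a term of order $\beta$, so the mechanism you describe fails as written. The argument that does work (and is Berman's) is a sign dichotomy at the maximum point $p_0$ of the test function: either $\mathrm{tr}_\omega\omega_\beta(p_0)\le \mathrm{tr}_\omega(\omega+dd^ch)(p_0)$, in which case the trace is bounded outright by $C(\|h\|_{C^2})$; or the $\beta$-term is nonnegative and may simply be discarded, yielding a bound on $\mathrm{tr}_{\omega_\beta}\omega(p_0)$, which is then converted into a bound on $\mathrm{tr}_\omega\omega_\beta(p_0)$ via $\mathrm{tr}_\omega\omega_\beta\le \frac{(\mathrm{tr}_{\omega_\beta}\omega)^{n-1}}{(n-1)!}\cdot\frac{\omega_\beta^n}{\omega^n}$ together with the density bound $\frac{\omega_\beta^n}{\omega^n}=e^{\beta(u_\beta-h)}\le C$ from your first step --- which is thus needed here, not only for the sup-estimate. (It is also cleaner to take $-Bu_\beta$ as correction, since $\mathrm{tr}_{\omega_\beta}(dd^ch)$ can be very negative.) A second, smaller gap: your argument yields $\omega+dd^ch\ge 0$ only at \emph{almost every} point of the contact set, while the theorem asserts it at \emph{every} such point; since the contact set is closed but may contain pieces of measure zero, the a.e.\ statement does not upgrade by continuity. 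The pointwise statement is the elementary fact the paper alludes to: at a contact point $x_0$, $P(h)+\rho$ is psh ($\rho$ a local potential of $\omega$) and touches the smooth function $h+\rho$ from below, so the sub-mean value inequality forces $dd^c(h+\rho)(x_0)\ge 0$.
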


The fact that $\omega+dd^c h\geq 0$ on the contact set $\{P(h)=h\}$ follows from basic properties of plurisubharmonic functions. We refer the reader to \cite[Page 46]{BD12} or \cite[Proof of Proposition 1.3]{EGZ11} for a proof of this fact. 
An alternative proof of this result has been provided by Berman in \cite{Berm19}. 

\subsection{Finite energy classes}


Given   $\f \in \PSH(X,\omega)$, we consider  
$$
\f_j:=\max(\f, -j) \in \PSH(X,\omega) \cap L^{\infty}(X).
$$
It follows from the Bedford-Taylor theory \cite{BT76,BT82} that the measures 
$(\omega+dd^c \f_j)^n
$
are well defined probability measures and the sequence
$$
\mu_j:={\bf 1}_{\{ \f>-j\}} (\omega+dd^c \f_j)^n
$$
is increasing \cite[p.445]{GZ07}. Since the $\mu_j$'s all have total mass bounded from above by $1$,
we consider
$$
\mu_{\f}:=\lim_{j \rightarrow +\infty} \mu_j,
$$
which is a positive Borel measure on $X$, with total mass $\leq 1$.

\begin{defi}
We set 
$$
{\mathcal E}(X,\omega):=\left\{ \f \in \PSH(X,\omega) \; | \; \mu_{\f}(X)=1 \right\}.
$$
For $\f \in {\mathcal E}(X,\omega)$, we set $\omega_{\varphi}^n = (\omega+dd^c \varphi)^n:=\mu_{\f}$.  
\end{defi}

Every bounded $\omega$-psh function clearly belongs to ${\mathcal E}(X,\omega)$.
The class ${\mathcal E}(X,\omega)$ also contains many $\omega$-psh functions which are unbounded:
\begin{itemize}
\item when $X$ is a compact Riemann surface, ${\mathcal E}(X,\omega)$ is precisely the set of $\omega$-sh functions
whose Laplacian does not charge polar sets.
\item if $\f \in \PSH(X,\omega)$ is normalized so that $\f \leq -1$, then $-(-\f)^\e $ belongs to
${\mathcal E}(X,\omega)$ whenever $0 \leq \e <1$.
\item the functions in ${\mathcal E}(X,\omega)$
have relatively mild singularities; in particular they have zero Lelong number at every point. 
\end{itemize}

It is proved in \cite{GZ07}
that the complex Monge-Amp\`ere operator 
$\f \mapsto \omega_{\varphi}^n$ is well defined on the class
${\mathcal E}(X,\omega)$, in the sense that if $\f \in {\mathcal E}(X,\omega)$ then for every sequence of bounded  $\omega$-psh functions $\f_j$ decreasing to $\f$, the  measures $(\omega+dd^c \f_j)^n$ converge weakly on $X$ towards $\mu_\f$.

It follows from \cite{GZ07,DiwJFA09} that a probability measure $\mu$ does not charge pluripolar sets if and only if
there exists a unique $\f \in {\mathcal E}(X,\omega)$ such that
$\mu=(\omega+dd^c \f)^n$ with $\sup_X \f=0$.

\smallskip

When $\mu$ is absolutely continuous with respect to the Lebesgue measure, one expects $\f$ to belong to a 
weighted finite energy class: we
let ${\mathcal W}$ denote the set of all functions $\chi:\R^- \rightarrow \R^-$ such that $\chi$ is increasing and $\chi(-\infty)=-\infty$.

\begin{defi}
Fix $\chi \in {\mathcal W}$.
We let
${\mathcal E}_{\chi}(X,\omega)$ be the set of
$\omega$-psh
functions with
finite $\chi$-energy, 
$$
{\mathcal E}_{\chi}(X,\omega):=\left\{ \f \in {\mathcal E}(X,\omega) \; | \; \chi(-|\f|) \in L^1(X, \MA(\f)) \right\}.
$$
When $\chi(t)=-(-t)^p$, $p>0$, we set ${\mathcal E}^p(X,\omega)={\mathcal E}_\chi(X,\omega)$. 
\end{defi}
For $u\in \mathcal{E}^p(X,\omega)$ the $E_p$ energy is defined as: 
\[
E_p(u) = \int_X |u|^p (\omega +dd^cu)^n.
\]
It follows from \cite[Theorem C]{GZ07}  that a probability measure $\mu$ is the Monge-Amp\`ere measure of a potential in $\mathcal{E}^p(X,\omega)$ if and only if $\mathcal{E}^p(X,\omega) \subset L^p(X,\mu)$. 

The class $\mathcal{E}^p(X,\omega)$, $p\geq 1$, can be equipped with a Finsler metric $d_p$ making it a complete geodesic metric space. We refer the reader to \cite{Dar15} for more details.  We stress here that the distance $d_p$ can be uniformly estimated by pluripotential terms (\cite[Theorem 5.5]{Dar15}): 
\begin{equation}\label{comparison}
C^{-1}d_p(u,v) \leq \int_X |u-v|^p (\omega_u^n+ \omega_v)^n \leq C d_p(u,v), \ u,v\in \mathcal{E}^p(X,\omega),
\end{equation}
for a constant $C=C(p,n)>0$. 

 \subsection{Examples} \label{sec:examples}
 In this section we 
 present some explicit computations suggesting that $\omega$-psh functions 
 with finite entropy   Monge-Amp\`ere measure 
 belong to 
 an
 energy class ${\mathcal E}^p(X,\omega)$ for some $p>1$ 
 depending on $n$.
 
 We recall that the measure $(\omega+dd^c\varphi)^n=f\omega^n$ has finite entropy iff $$\int_X f\log f\; \omega^n <+\infty.$$
 
   \subsubsection{Radial singularities} 
   
   We assume here that the $\omega$-psh functions $\f$ under consideration are locally bounded 
   in $X \setminus \{ p \}$. We choose a local chart biholomorphic to the unit ball $\B$ of $\C^n$,
   with $p$ corresponding to the origin. We further assume that $\f$
   has a {\it radial singularity} at $p$, i.e. it is invariant under the group  
  $O(2n,\R)$ near $p$, and so is  $\omega$.
   The  
   singularity type  of   $\f$ 
    only depends on its local behavior near $p$.
   
 We thus consider the local situation of 
  a psh function $v=\chi \circ L$, where $\chi:\R^- \rightarrow\R^-$ is a 
  convex increasing function and $L:z \in  \B \rightarrow \log |z| \in \R^-$.
  Since we only consider Monge-Amp\`ere measures $(dd^c v)^n$ which are absolutely continuous with respect to Lebesgue measure
  (having finite entropy), we can assume w.l.o.g. that $\chi$ is ${\mathcal C}^1$-smooth
  and $\chi'(-\infty)=0$. Then a standard computation shows that
  $$
  (dd^c v)^n=c_n \frac{(\chi' \circ L)^{n-1}\chi'' \circ L}{|z|^{2n}} dV.
  $$
  We infer that  $(dd^c v)^n $ has finite entropy if and only if
 \begin{equation*}
   \int_{-\infty} (-t) \chi'(t)^{n-1} \chi''(t) dt <+\infty,
 \end{equation*}
while  $v$ belongs to the finite energy class ${\mathcal E}^p(X,\omega)$ if and only if
   \begin{equation*}
   \int_{-\infty} (-\chi(t))^p \chi'(t)^{n-1} \chi''(t) dt <+\infty.
 \end{equation*}
Integrating by parts, 
the finite entropy condition is thus equivalent to
 $$
 \lim_{-\infty} (-t \chi'(t)^n)+ \int_{-\infty}  \chi'(t)^{n}  dt <+\infty.
 $$
 The latter condition implies (since $\chi'$ increases) that $0 \leq \chi'(t) \leq C (-t)^{-1/n}$,
 hence $(dd^c v)^n $ has finite entropy iff $\int_{-\infty}  \chi'(t)^{n}  dt <+\infty$.
Note that 
$$
0 \leq \chi'(t) \leq C (-t)^{-1/n}
\Longrightarrow 0 \leq |\chi(t)| \leq C' |t|^{1-1/n},
$$
thus if $(dd^c v)^n $ has finite entropy then
$$
   \int_{-\infty} (-\chi(t))^p \chi'(t)^{n-1} \chi''(t) dt 
\leq   C'  \int_{-\infty} |t|^{p(1-1/n)} \chi'(t)^{n-1} \chi''(t) dt   <+\infty
$$
  if $p(1-1/n) \leq 1$, i.e. precisely when $p \leq n/(n-1)$.

  \subsubsection{Explicit radial examples} 

We let $X=\C\PP^n$ denote the complex projective space, $n \geq 2$, and $\omega=\omega_{FS}$ be the Fubini-Study K\"ahler form.
We let $\C^n \subset X$ denote an affine chart and
fix $0<\al<1$. Consider 
$$
\chi_{\al}(t)=\left\{
\begin{array}{lcl}
- (-t)^{\al}/\al  & \text{ for } & t \leq -1 \\
t & \text{ for } & t \geq 1
\end{array}
\right. 
$$
and extend $\chi_{\alpha}$ smoothly in $[-1,1]$, so that the resulting  weight $\chi_{\al}$  is convex increasing. Hence
$
\p_{\al}(z)=\chi_{\al} \circ \log |z|
$
is plurisubharmonic in $\C^n$ and extends at infinity as a $\omega$-psh function $\f_{\al}$
s.t.
$$
(\omega+dd^c \f_{\al})^n=(dd^c \p_{\al})^n=f_{\al}\,dV,
$$
 where 
the density $f_{\al}$  
is concentrated in  the euclidean ball $\{|z| \leq e \}$. On $\{e^{-1}<|z|\leq e\}$ it is smooth while on $\{|z|<e^{-1}\}$  it can be written as
$$
f_{\al}(z)=\frac{c'}{(-\log |z| )^{n(1-\al)+1} |z|^{2n}}.
$$
 
Using the general computations above one can check that the entropy\\ $\int_{\{|z| \leq e^{-1} \}} f_{\al} \log f_{\al}\,dV$ is finite if and only if
 $\al < \frac{n-1}{n}$. On the other hand, $\f_{\al}$ belongs 
 to ${\mathcal E}^p(X,\omega)$ if and only if
 $$
 \int_{-\infty} |\chi_{\al}|^p \chi_{\al}'' {\chi'}_{\al}^{n-1}  \, dV
 \simeq \int_{-\infty} \frac{|t|^{p\al}}{|t|^{n(1-\al)+1}}< +\infty
 \Longleftrightarrow
 p< n \frac{1-\al}{\al}.
 $$
 
Fix $\e>0$ very small. 
If we choose $\al=\frac{n-1}{n+\e}$ then the probability measure
$(\omega+dd^c \f_{\al})^n$ has finite entropy and
$\f_{\al}$ belongs to the finite energy class ${\mathcal E}^{\frac{n}{n-1}}(X,\omega)$,
but it does not belong to ${\mathcal E}^{\frac{n}{n-1}(1+\e)}(X,\omega)$.


    \subsubsection{Divisorial singularities}\label{div Section}

We assume here $X$ is projective, $\omega=i \Theta_h$ is the (positive) curvature of 
a smooth hermitian metric $h$ of an ample holomorphic line bundle $L$ on $X$
(i.e. $\omega$ is a Hodge form), and $D=(s=0)$ is a divisor defined as the zero set of a holomorphic section $s \in H^0(X,L)$.

We assume w.l.o.g. that the $\omega$-psh function $\p=\log |s|_h$
is normalized by $\sup_X \p \leq -1$. By construction 
$\omega+dd^c \p=[D]$ is the current of integration along $D$. Let $\chi: \R^- \rightarrow \R^-$
denote a smooth convex increasing function such that $\chi' \leq 1/2$. The function $\f:=\chi \circ \p$ then
satisfies
$$
\omega_\f:=\omega+dd^c \f=[1-\chi' \circ \p] \omega +\chi' \circ \p\, \omega_{\p}+\chi'' \circ \p\, d\p \wedge d^c \p \geq 0
$$
It is thus  $\omega$-psh and has {\it divisorial singularities} along $D$.

We assume $\chi'(-\infty)=0$. Thus $\chi' \circ \p \omega_{\p}=\chi' \circ \p [D]=0$ and 
$$
\omega_\f=[1-\chi' \circ \p] \omega +\chi'' \circ \p d\p \wedge d^c \p \geq 0.
$$
We let the reader then check that
$$
(\omega+dd^c \f)^n=f \omega^n
\text{ with }
f \sim 1+\frac{\chi'' \circ \p}{|s|^2}.
$$
 
The finite entropy condition near a smooth point of $D=(z_1=0)$ reads as
$$
\int_0 \frac{(-\log|z_1| ) \chi'' \circ \log |z_1|}{|z_1|^2} dV(z_1) =\int_{-\infty} (-t) \chi''(t) dt <+\infty,
$$
passing to (logarithmic) polar coordinates. An integration by parts yields
\begin{eqnarray*}
\int_{-\infty} (-t) \chi''(t) dt &= &\lim_{t \rightarrow -\infty} (-t) \chi'(t) +\int_{-\infty} \chi'(t) dt \\
&= & \lim_{t \rightarrow -\infty} (-t) \chi'(t)+(-\chi)(-\infty)+O(1).
\end{eqnarray*}

This shows that $\f=\chi \circ \log |s|_h$ is a finite entropy potential if and only if $\chi(-\infty)>-\infty$,
i.e. when $\f$ is bounded. Said differently, there is no finite entropy potential with divisorial singularities.

\section{Moser-Trudinger inequalities}\label{sect: MT inequality}

  \begin{thm}
 	\label{thm: Moser-Trudinger inequality} 
 	Fix $p >0$. Then there exist $c>0, C>0$ depending on $X$, $\omega$, $n$, $p$ such that, for all $\varphi\in \mathcal{E}^p(X,\omega)$ with $\sup_X \varphi=-1$,  we have
 	\[
 	\int_X \exp \left ( c  |E_p(\varphi)|^{-\frac{1}{n}} |\varphi|^{1+\frac{p}{n}}\right )\omega^n \leq C. 
 	\]
 \end{thm}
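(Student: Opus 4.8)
The plan is to reduce the statement, via the layer-cake formula, to a super-polynomial decay estimate for the sublevel sets of $\varphi$. Normalising $u=-\varphi\ge 1$ and setting $q=1+\tfrac pn$, one has
\[
\int_X\exp\!\big(c\,E_p(\varphi)^{-1/n}u^{q}\big)\,\omega^n=\int_0^{\infty}\vol\big(\{e^{c\,E_p(\varphi)^{-1/n}u^{q}}>\lambda\}\big)\,d\lambda ,
\]
so it suffices to establish a bound of the form $\vol(\{\varphi<-s\})\le C_0\exp\!\big(-\varepsilon_0\,E_p(\varphi)^{-1/n}s^{q}\big)$ for $s\ge 1$, with $C_0,\varepsilon_0$ depending only on $X,\omega,n,p$. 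I would obtain this through two independent inputs: a comparison between volume and Monge--Amp\`ere capacity, and a sharp estimate for the capacity of sublevel sets.

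For the first input I would use the (essentially classical, see \cite{GZbook}) inequality $\vol(K)\le C\exp\!\big(-\varepsilon\,\capo(K)^{-1/n}\big)$, valid for every Borel set $K\subset X$. Its proof combines two facts: the uniform Skoda integrability $\int_X e^{-\varepsilon w}\,\omega^n\le C$ over all $w\in\PSH(X,\omega)$ with $\sup_X w=0$; and the lower bound $\sup_X V_{K}\ge \capo(K)^{-1/n}$ for the global extremal function $V_{K}=\big(\sup\{w\in\PSH(X,\omega):w\le 0,\ w\le -1\text{ on }K\}\big)^{*}$, which follows by testing the capacity against the admissible candidate $V_K/\sup_X V_K-1$. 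Applying Skoda to $w=V_K-\sup_X V_K$, which is $\le-\sup_X V_K$ on $K$, then yields the claim.

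The core of the argument is the capacity estimate $\capo(\{\varphi<-s\})\le C\,E_p(\varphi)\,s^{-(n+p)}$ for $s\ge 1$. A weak version, with the exponent $s^{-p}$ in place of $s^{-(n+p)}$, is immediate: the inclusion $\{\varphi<-s\}\subset\{\varphi<v-s+1\}$ together with the comparison principle gives, for every $v\in\PSH(X,\omega)$ with $-1\le v\le 0$,
\[
\int_{\{\varphi<-s\}}(\omega+dd^cv)^n\le\int_{\{\varphi<-(s-1)\}}(\omega+dd^c\varphi)^n\le \frac{E_p(\varphi)}{(s-1)^p},
\]
and this already produces, through the volume--capacity inequality, a Moser--Trudinger inequality with the non-optimal exponent $\tfrac pn$. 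The whole point of Theorem \ref{thm: Moser-Trudinger inequality} is to gain the remaining $n$ powers of $s$. Here I would apply the elementary comparison $t^{n}\capo(\{\varphi<-a-t\})\le \MA(\varphi)(\{\varphi<-a\})$, valid for $0<t\le 1$, to the rescaled potential $\varphi/(2s)$, which is $\omega$-psh for $s\ge 1$; expanding $(\omega+dd^c(\varphi/2s))^n$ into the mixed masses $\omega^{n-k}\wedge(\omega+dd^c\varphi)^k$ reduces the problem to controlling the sublevel integrals $M_k(s)=\int_{\{\varphi<-s\}}\omega^{n-k}\wedge(\omega+dd^c\varphi)^k$. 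I expect the sharp bound $M_k(s)\le C\,E_p(\varphi)\,s^{-(p+n-k)}$ to hold for all $0\le k\le n$, in which case every term of the expansion contributes at the same order $s^{-(n+p)}$. The top case $k=n$ is Chebyshev's inequality; the bottom case $k=0$ is precisely the volume decay, consistent with what is being proved; and the intermediate cases should follow by a downward induction on $k$, each step being a weighted integration by parts in which the factor $|\varphi+s|$ on $\{\varphi<-s\}$ yields the gain of one power of $s$, while the comparability of weighted mixed energies $\int_X|\varphi|^p\,\omega^{n-k}\wedge(\omega+dd^c\varphi)^k\le C\,E_p(\varphi)$ keeps every constant uniform in $\varphi$. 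Making this induction rigorous, and uniform in $\varphi$, is the main obstacle.

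Finally I would assemble the pieces. Combining the two inputs gives $\vol(\{\varphi<-s\})\le C_0\exp(-\varepsilon_0\,E_p(\varphi)^{-1/n}s^{q})$ for $s\ge 1$. In the layer-cake integral the contribution of $\{u\le 1\}$ is harmless, while for the tail one writes $\{e^{c\,E_p(\varphi)^{-1/n}u^{q}}>\lambda\}=\{u>s_\lambda\}$ with $s_\lambda^{q}=c^{-1}E_p(\varphi)^{1/n}\log\lambda$; the volume decay then gives $\vol(\{u>s_\lambda\})\le C_0\,\lambda^{-\varepsilon_0/c}$, the exponent being independent of $\varphi$ precisely because of the normalisation by $E_p(\varphi)^{-1/n}$. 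Choosing any $c<\varepsilon_0$ makes the $\lambda$-integral converge and produces a bound $C$; the normalisation $\sup_X\varphi=-1$ (hence $u\ge 1$) and the uniformity of the Skoda constant ensure that $c,C$ depend only on $X,\omega,n,p$.
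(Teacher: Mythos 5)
Your reduction via the layer-cake formula and the volume--capacity inequality $\vol(K)\le C\exp\bigl(-\varepsilon\,\capo(K)^{-1/n}\bigr)$ is fine, but the step you flag as ``the main obstacle'' is not a technical difficulty: the core capacity estimate $\capo(\{\varphi<-s\})\le C\,E_p(\varphi)\,s^{-(n+p)}$ is \emph{false} on a compact K\"ahler manifold of dimension $n\ge 2$, and so are the intermediate bounds $M_k(s)\le C\,E_p(\varphi)\,s^{-(p+n-k)}$ for $1\le k\le n-1$. Take the divisorial examples of Section \ref{div Section}: let $\psi=\log|s|_h\le -1$, so that $\omega+dd^c\psi=[D]$, and set $\varphi=-(-\psi)^{\alpha}$ with $\frac{1}{n+p}<\alpha<\frac{1}{1+p}$, a nonempty range precisely when $n\ge 2$. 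By Lemma \ref{lem: Ep GZ07}, $\varphi\in\mathcal{E}^p(X,\omega)$. Yet, writing $T=s^{1/\alpha}$, so that $\{\varphi<-s\}=\{\psi<-T\}$, the competitor $u_T=\frac1T\max(\psi,-T)$ satisfies $-1\le u_T\le 0$, is $\omega$-psh for $T\ge1$, and
\[
(\omega+dd^c u_T)^n\;\ge\; n\Bigl(1-\tfrac1T\Bigr)^{n-1}\tfrac1T\;\omega^{n-1}\wedge\bigl(\omega+dd^c\max(\psi,-T)\bigr).
\]
The measure $\omega^{n-1}\wedge(\omega+dd^c\max(\psi,-T))$ has total mass $1$ by Stokes, and it vanishes on the open set $\{\psi>-T\}$: there $\max(\psi,-T)=\psi$, so by locality it equals $\omega^{n-1}\wedge[D]=0$, since $D\subset\{\psi=-\infty\}$. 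Hence all of its mass sits on $\{\psi\le -T\}$, giving $\capo(\{\varphi<-s\})\ge c_n\,s^{-1/\alpha}$, and since $1/\alpha<n+p$ we get $s^{n+p}\capo(\{\varphi<-s\})\to+\infty$: no constant, even one depending on $\varphi$, can make your estimate true. The same example defeats the mixed-mass claims: as in Section \ref{div Section}, $\omega+dd^c\varphi=(1-\chi'\circ\psi)\,\omega+\chi''\circ\psi\,d\psi\wedge d^c\psi$ with $\chi(t)=-(-t)^\alpha$, so for every $1\le k\le n-1$ one computes $M_k(s)\approx \chi'(-T)\approx s^{-(1-\alpha)/\alpha}$; taking $\alpha$ close to $\frac{1}{1+p}$ makes $(1-\alpha)/\alpha$ close to $p$, while your claim at $k=n-1$ demands decay $s^{-(p+1)}$.

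What makes this deceptive is that your strategy is exactly the right one \emph{locally}: it is the paper's own proof of Theorem \ref{thm: complex MTA inequality}, where the capacity decay is Lemma \ref{lem: capacity sublevel Ep}. The reason it works there is that Cegrell's class $\mathcal{E}^p(\Omega)$ is characterized by $\int_0^{\infty}t^{n+p-1}\capa(\varphi<-t)\,dt<+\infty$, which excludes divisorial-type singularities; the compact class $\mathcal{E}^p(X,\omega)$ is instead defined through the \emph{non-pluripolar} Monge--Amp\`ere measure, which discards precisely the mass concentrating along $D$ that the capacity detects. (Only the capacity step is obstructed: in the example above the volume of $\{\varphi<-s\}$ decays like $e^{-2s^{1/\alpha}}$, so the conclusion of the theorem holds --- it just cannot be factored through $\capo$.) The paper's compact proof avoids capacities altogether: with $q=1+\frac pn$ and $a$ chosen so that $2a^nq^nE_p(\varphi)=1$, one forms the envelope $u=P(-a(-\varphi)^{q})$, uses Theorem \ref{thm:BD} to see that $(\omega+dd^c u)^n$ is carried by the contact set, where it is dominated by $a^nq^n|\varphi|^{p}(\omega+dd^c\varphi)^n$; this mass bound forces $\sup_X u\ge -b$ for a uniform $b$, and Skoda's uniform integrability applied to $u$, combined with $u\le-a(-\varphi)^q$, yields the exponential bound. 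Some such device, controlling sublevel sets through the supremum of an envelope rather than through capacity, is what your argument is missing and cannot do without.
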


  Theorem \ref{thm: Moser-Trudinger inequality} generalizes an important result of Berman-Berndtsson \cite[Theorem 1.1]{BB11} who established the above inequality in the case when $\{\omega\}=c_1(L)$ for some holomorphic line bundle $L$ and 
  $p=1$. 
  
 \begin{proof}
 	By approximation we can assume that $\varphi$ is smooth. For notational convenience we set $q=\frac{n+p}{n}>1$, $\psi:=-a(-\varphi)^q$ and $u := P(\psi)$, where $a>0$ is a small constant suitably chosen below. 
 	Note also that $E_p(\varphi) = \int_X(-\varphi)^p \omega_{\varphi}^n \geq 1$ because $\varphi\leq -1$.  A direct computation shows that  
 	 \begin{eqnarray*}
\omega+dd^c \psi & =& \omega+aq(-\varphi)^{q-1} dd^c \varphi -aq(q-1) (-\varphi)^{q-2} d\varphi \wedge d^c \varphi\\
  &\leq & [1-aq(-\varphi)^{q-1}] \omega+ aq(-\varphi)^{q-1} (\omega+dd^c \varphi).
	 \end{eqnarray*}
We set 
\[
F:= \left\{aq|\varphi|^{q-1} \geq 1\right\}=\left\{qa^{\frac{n}{n +p}} |\psi|^{\frac{p}{n+p}}\geq 1\right\},\, \ G:= \left\{|u| \geq a^{-\frac{n}{p}} q^{-\frac{n+p}{p}}\right\}.
\] 
Observe that 
$$
(\omega+dd^c \psi)\leq aq|\psi|^{q-1} (\omega+dd^c \varphi)\quad  {\rm{on}}\; F
$$ 
and $G \cap \{\psi=P(\psi)\} \subseteq F$. Hence on $G\cap \{\psi=P(\psi)\}$ we have 
\[
0\leq  (\omega +dd^c \psi) \leq aq|\varphi|^{q-1} (\omega+dd^c \varphi),
\]
where the first inequality follows from
Theorem \ref{thm:BD}.
By Theorem \ref{thm:BD} again,
\begin{eqnarray}\label{mass1}
{\bf 1}_{F}(\omega+dd^c u)^n &= &1_{\{ \psi=P(\psi)\} \cap F}  (\omega+dd^c \psi)^n \nonumber \\
&\leq & 1_{\{ \psi=P(\psi)\} \cap F} \,a^n q^n |\psi|^{p} (\omega+dd^c \psi)^n.
\end{eqnarray}
We now choose $a\in(0,1)$ so that 
\[
2a^n q^n \int_X |\varphi|^p (\omega+dd^c \varphi)^n=2a^n q^n E_p(\varphi) = 1.
\]
Integrating over $X$ both sides of \eqref{mass1}, we obtain
\[
\int_G (\omega+dd^c u)^n \leq \frac{1}{2}.
\]
It thus follows that $G\neq X$, or equivalently that $G^c\neq \emptyset$. In particular $\sup_X u \geq -a^{-\frac{n}{p}} q^{-\frac{n+p}{p}}=:-b$. Recall that we have chosen $c_0,C_0>0$ so that, for all $\phi \in \PSH(X,\omega)$ with $\sup_X \phi=0$ we have 
\[
\int_X e^{c_0|\phi|} \omega^n \leq C_0.
\]
In particular, for $\phi= u+b$ we have $\sup_X \phi \geq 0$, hence
\[
\int_X e^{c_0 (a (-\varphi)^q -b)} \omega^n \leq  \int_X e^{-c_0 (u +b)} \omega^n = \int_X e^{-c_0 (\phi-\sup_X \phi) - c_0\sup_X \phi}  \leq C_0. 
\]
Set $K=\{a|\varphi|^{q-1} \leq q^{-1} 2^{1-1/q}\}$ and note that $a|\varphi|^q \leq q^{-1}2^{1-1/q}|\varphi|\leq 2|\varphi|$ on $K$.
On $X\setminus K$ we have that $a^{\frac{q}{q-1}} |\varphi|^q > 2q^{-\frac{q}{q-1}}$ or, equivalently,
\[
\frac{a|\varphi|^{q}}{2} <  a|\varphi|^q -  b.
\]
Thus
\begin{eqnarray*}
\int_X e^{c_0\frac{ a |\varphi|^q}{2}} \omega^n &\leq &\int_K e^{c_0 |\varphi|} \omega^n + \int_{X\setminus K}e^{c_0(a|\varphi|^q -b)} \omega^n\\
&=&\int_K e^{-c_0 (\varphi+1)+c_0} \omega^n + \int_{X\setminus K}e^{c_0(a|\varphi|^q -b)} \omega^n 
\leq C_0(e^{c_0}+1). \\
\end{eqnarray*}
The result follows with 
$
c= 2^{-1-1/n}q^{-1}c_0,\, {\rm{and}} \, C= C_0(e^{c_0}+1). 
$
\end{proof}

An interesting consequence of Theorem \ref{thm: Moser-Trudinger inequality} 
is the following result 
which was conjectured by Aubin \cite[page 148]{Aub84} when $p=1$:

\begin{corollary}
	\label{cor: MT inequality}
	Fix $p>0$. There exist uniform constants $A,B>0$ such that for all $k\in \mathbb{N}$ and $\varphi\in \mathcal{E}^p(X, \omega)$ normalized by $\sup_X \varphi=-1$, 
	\[
	\log \int_X e^{- k\varphi} \omega^n  \leq A k^{1+\frac{n}{p}} E_p(\varphi)^{1/p} +B. 
	\]
\end{corollary}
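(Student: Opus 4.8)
The plan is to derive Corollary~\ref{cor: MT inequality} as an immediate consequence of Theorem~\ref{thm: Moser-Trudinger inequality} via a Young-type (Legendre transform) inequality that trades the linear term $k|\f|$ for the superlinear term $|\f|^{q}$, $q:=1+\tfrac{p}{n}$, which is controlled by the Moser--Trudinger estimate. Since $\f\le -1$ we have $-k\f=k|\f|$, so it suffices to bound $\int_X e^{k|\f|}\,\omega^n$. Write $M:=E_p(\f)\ge 1$ and let $c,C>0$ be the (uniform) constants furnished by Theorem~\ref{thm: Moser-Trudinger inequality}, so that $\int_X \exp\bigl(c M^{-1/n}|\f|^{q}\bigr)\,\omega^n\le C$.

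First I would record the elementary pointwise inequality: for every $t\ge 0$,
\[
kt\ \le\ \frac{c}{M^{1/n}}\,t^{q}\ +\ \max_{s\ge 0}\Bigl(ks-\frac{c}{M^{1/n}}s^{q}\Bigr).
\]
The maximum on the right is a Legendre transform and is computed by elementary calculus: it is attained at $s_*=\bigl(kM^{1/n}/(cq)\bigr)^{1/(q-1)}$, and using $q-1=\tfrac{p}{n}$, $\tfrac{1}{q-1}=\tfrac{n}{p}$ and $\tfrac{q}{q-1}=1+\tfrac{n}{p}$ one finds its value to be $A\,k^{1+n/p}M^{1/p}$ with $A:=\tfrac{q-1}{q}(cq)^{-n/p}$. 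Applying this with $t=|\f|$ yields the key bound
\[
k|\f|\ \le\ \frac{c}{M^{1/n}}\,|\f|^{q}\ +\ A\,k^{1+\frac{n}{p}}\,M^{\frac1p}.
\]

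I would then exponentiate and integrate: since the remainder term is a constant independent of $x\in X$, it factors out, and Theorem~\ref{thm: Moser-Trudinger inequality} controls the remaining integral,
\[
\int_X e^{k|\f|}\,\omega^n\ \le\ e^{A\,k^{1+n/p}M^{1/p}}\int_X \exp\!\Bigl(\tfrac{c}{M^{1/n}}|\f|^{q}\Bigr)\omega^n\ \le\ C\,e^{A\,k^{1+n/p}M^{1/p}}.
\]
Taking logarithms gives $\log\int_X e^{-k\f}\,\omega^n\le A\,k^{1+n/p}E_p(\f)^{1/p}+B$ with $B:=\log C$, as desired.

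There is essentially no serious obstacle here: the argument is a direct application of Theorem~\ref{thm: Moser-Trudinger inequality}, and the only point that requires care is the bookkeeping of exponents in the Legendre transform, which is precisely where the powers $k^{1+n/p}$ and $E_p(\f)^{1/p}$ in the statement originate; this is also where the hypothesis $p>0$ (guaranteeing $q>1$) is used. Since $c$ and $C$ depend only on $X,\omega,n,p$, the resulting constants $A,B$ are uniform, as claimed.
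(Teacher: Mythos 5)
Your proof is correct and follows essentially the same route as the paper: both split $k|\f|$ pointwise into the term $c\,E_p(\f)^{-1/n}|\f|^{1+p/n}$ controlled by Theorem \ref{thm: Moser-Trudinger inequality} plus a constant of size $A\,k^{1+n/p}E_p(\f)^{1/p}$, then exponentiate and integrate. The paper phrases this via Young's inequality with conjugate exponents $1+\tfrac{p}{n}$, $1+\tfrac{n}{p}$ and an optimized scaling parameter, which is exactly your Legendre-transform computation (and indeed yields the same constant $A=\tfrac{p\,n^{n/p}}{c^{n/p}(n+p)^{1+n/p}}$ and $B=\log C$).
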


We follow \cite[page 359]{BB14}.  As explained in \cite[page 2]{BB11} Corollary \ref{cor: MT inequality} is actually equivalent to 
Theorem \ref{thm: Moser-Trudinger inequality}.  A similar estimate has been observed in \cite[Remark page 1461]{AC19} 
for the case when $p\geq 1$ (and $\{\omega\}$ is integral) by a direct application of \cite{BB11} and H\"older inequality.

\begin{proof}
	We set $\alpha = 1+ \frac{p}{n}$, $\beta = 1+ \frac{n}{p}$ so that $\frac{1}{\alpha} + \frac{1}{\beta}=1$. For $\eta>0$, $\xi >0$, Young's inequality yields
	\[
	\eta\xi \leq \frac{\eta^{\beta}}{\beta} + \frac{\xi^{\alpha}}{\alpha} = \frac{p \eta^{1+\frac{n}{p} }}{n+p} + \frac{n \xi^{1+\frac{p}{n}}}{n+p}.
	\]
	Dividing by $\eta$ and replacing $\eta$ by $\tau^{1+\frac{p}{n}}$
	we obtain 
	\[
	\xi \leq \frac{p \tau^{1+\frac{n}{p}}}{n+p} + \frac{n \xi^{1+ \frac{p}{n}}}{(n+p)\tau^{1+ \frac{p}{n}}} .
	\]
	We now choose $\xi = -k\varphi(x)$, $x\in X$, and $\tau= a E_p(\varphi)^{\frac{1}{p+n}}$, $a>0$, to obtain 
	\begin{equation}\label{ineq3.2}
 	-k\varphi(x)  \leq  \frac{p a^{1+\frac{n}{p}} E_p(\varphi)^{\frac{1}{p}} }{n+p} +  \frac{n (-k\varphi(x))^{1+\frac{p}{n}}}{(n+p) a^{1+\frac{p}{n}} E_p(\varphi)^{\frac{1}{n}}}.
	\end{equation}
	We   choose $a$ so that 
	$
	\frac{n k^{1+\frac{p}{n}}}{(n+p)a^{1+\frac{p}{n}}} = c,
	$
	where $c$ is the constant in Theorem \ref{thm: Moser-Trudinger inequality}, i.e.
	$
	a= k  \left (\frac{n}{(n+p)c} \right )^{\frac{n}{n+p}}. 
	$
	By inserting this choice of $a$ in \eqref{ineq3.2} we get
	\begin{equation*}	
	-k\varphi(x)  \leq  \frac{p}{(n+p) }\left(\frac{n}{(n+p)c}\right)^{n/p} k^{1+\frac{n}{p}} E_p(\varphi)^{\frac{1}{p}} + c|\varphi(x)|^{1+\frac{p}{n}}E_p(\varphi)^{-\frac{1}{n}}.
	\end{equation*}
	Exponentiating and integrating, we thus obtain the desired inequality with 
	\[
	A=  \frac{p n^{n/p}}{c^{n/p} (n+p)^{1+n/p}}, \;
	\text{ and } \;  B= \log C, 
	\]
	where $C$ is the constant in Theorem \ref{thm: Moser-Trudinger inequality}. 
\end{proof}

We now connect integrability properties of a given qpsh function $v$  
to the finite energy of its envelopes $P(-(-v)^q)$,  $q>1$. 
Since $q>1$, the function $-(-v)^q$ is not qpsh and  it is
more singular than $v$. For an arbitrary  qpsh function $v$,
 it could happen that $P(-(-v)^q)$ is identically $-\infty$.

 \begin{prop}\label{lem: criterion in Ep}
Fix $v\in \PSH(X,\omega)$ with $v\leq -1$ and $p>0$.  
\begin{itemize}
\item  If $v\in \mathcal{E}^p(X,\omega)$ then $P(-(-v)^{1+ \frac{p}{n}}) \in \mathcal{E}(X,\omega)$. 
\item If $P(-(-v)^{s}) \in \PSH(X,\omega)$ for some $s>1+p$ then $v\in \mathcal{E}^p(X,\omega)$. 
\end{itemize}
 \end{prop}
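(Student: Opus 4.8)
The plan is to prove the two implications separately, in both cases transferring the estimate to an envelope via Theorem~\ref{thm:BD} together with the pointwise Hessian inequality already used in the proof of Theorem~\ref{thm: Moser-Trudinger inequality}. For the first implication set $q=1+\frac{p}{n}$, $\psi=-(-v)^{q}$ and $u=P(\psi)$. Approximating $v$ by the canonical truncations $v_k=\max(v,-k)\searrow v$ (each $\le-1$, bounded, hence in $\mathcal{E}(X,\omega)$ with full mass), we get $\psi_k=-(-v_k)^{q}\searrow\psi$ and $u_k:=P(\psi_k)\searrow u$. Since $v\le-1$ forces $q(-v)^{q-1}\ge q>1$ everywhere, the auxiliary set ``$F$'' is all of $X$, and Theorem~\ref{thm:BD} (after the usual regularization) gives on the contact set the domination
\[
\omega_{u_k}^n=\mathbf 1_{\{u_k=\psi_k\}}(\omega+dd^c\psi_k)^n\le q^{n}(-v_k)^{p}\,\omega_{v_k}^n .
\]
In particular $\int_X\omega_{u_k}^n\le q^{n}E_p(v)<\infty$; the substantive point is that the full mass $1$ is not lost in the limit $u_k\searrow u$.

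To control the escaping mass, note that $-u_k=(-v_k)^{q}$ on the contact set, so $\{u_k<-t\}\cap\{u_k=\psi_k\}\subset\{v_k<-t^{1/q}\}$ and hence
\[
\int_{\{u_k<-t\}}\omega_{u_k}^n\le q^{n}\int_{\{v_k<-t^{1/q}\}}(-v_k)^{p}\,\omega_{v_k}^n .
\]
Splitting the right-hand side at the level $\{v=-k\}$, the part over $\{v>-k\}$ is bounded by $q^{n}\int_{\{v<-t^{1/q}\}}(-v)^{p}\omega_v^n$, whose tail vanishes as $t\to\infty$ because $E_p(v)<\infty$, while the part over $\{v\le-k\}$ is at most $q^{n}k^{p}\,\mathrm{Vol}_\omega\{v<-k\}\to0$ since $v\in L^{r}(\omega^n)$ for every $r$ (indeed Theorem~\ref{thm: Moser-Trudinger inequality} applied to $v$ itself gives super-polynomial decay of $\mathrm{Vol}_\omega\{v<-k\}$). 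Thus the escaping mass is uniformly small, $\int_X\omega_u^n=1$, and $u\in\mathcal{E}(X,\omega)$.

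For the second implication write $w=P(-(-v)^s)\in\PSH(X,\omega)$ with $w\le-1$, and set $h(t)=-(-t)^{1/s}$, $\hat w:=h(w)=-(-w)^{1/s}$. The key observation is that, since $s>1$, $h$ is convex increasing with $0\le h'\le\frac1s\le1$ on $(-\infty,-1]$, so that
\[
\omega+dd^c\hat w=(1-h'(w))\,\omega+h'(w)\,\omega_w+h''(w)\,dw\wedge d^cw\ge0,
\]
whence $\hat w\in\PSH(X,\omega)$. Moreover $w\le-(-v)^s$ gives $-v\le(-w)^{1/s}$, i.e. $\hat w\le v$. By the standard monotonicity of finite energy classes under decreasing singularity (\cite{GZ07}: if $\phi\le\phi'$ and $\phi\in\mathcal{E}(X,\omega)$ then $\phi'\in\mathcal{E}(X,\omega)$, with $E_p(\phi')\le C(n,p)E_p(\phi)$ when both are in $\mathcal{E}^p(X,\omega)$), it suffices to prove $\hat w\in\mathcal{E}^p(X,\omega)$; then $v\ge\hat w$ forces $v\in\mathcal{E}(X,\omega)$ and $E_p(v)\le C\,E_p(\hat w)<\infty$, so $v\in\mathcal{E}^p(X,\omega)$.

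Finally I would show $E_p(\hat w)<\infty$, which is exactly where the hypothesis $s>1+p$ enters. Since $(-\hat w)^p=(-w)^{p/s}$, I expand $\omega_{\hat w}^n$ using $(dw\wedge d^cw)^2=0$: it is a combination of the mixed terms $(-w)^{p/s}h'(w)^{n-j}(1-h'(w))^{j}\,\omega^{j}\wedge\omega_w^{n-j}$ together with one gradient term. As $h'(w)^{n-j}\simeq(-w)^{(n-j)(1/s-1)}$, the $j$-th term carries the power $(-w)^{\frac{p+n-j}{s}-(n-j)}$, which is $\le0$ precisely when $s\ge\frac{p+n-j}{n-j}=1+\frac{p}{n-j}$; the binding case $j=n-1$ gives the threshold $1+p$, so $s>1+p$ makes every exponent with $j\le n-1$ nonpositive and bounds each such term by the finite mixed mass $\int_X\omega^{j}\wedge\omega_w^{n-j}\le1$. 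The top term $j=n$ is $\int_X(-w)^{p/s}(1-h'(w))^n\omega^n\le\int_X(-w)^{p/s}\omega^n<\infty$ by $L^r$-integrability of $w$, and the gradient term is controlled similarly after an integration by parts. Carried out on the truncations $\hat w_j=\max(\hat w,-j)$ this yields $\sup_j E_p(\hat w_j)<\infty$, hence $\hat w\in\mathcal{E}^p(X,\omega)$ by the energy characterization of \cite{GZ07}. I expect the only real difficulty to lie not in this algebra but in the two limiting arguments — ruling out loss of mass in $u_k\searrow u$, and justifying the Monge--Amp\`ere computation for $\hat w=h(w)$ and its transfer to the truncations when $w$ is merely $\omega$-psh — both of which require the convergence theory of the non-pluripolar product rather than any new idea.
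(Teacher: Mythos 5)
Your second implication is correct and is essentially the paper's own argument: you compose the envelope $w=P(-(-v)^s)$ with $h(t)=-(-t)^{1/s}$, observe that $\hat w=h(w)\leq v$ is $\omega$-psh, and reduce to showing $\hat w\in\mathcal{E}^p(X,\omega)$, which then yields $v\in\mathcal{E}^p(X,\omega)$ by monotonicity of the energy classes. Your exponent analysis (binding case giving the threshold $s>1+p$, plus the gradient term handled by integration by parts) is exactly the content of Lemma \ref{lem: Ep GZ07} with $r=1/s<\frac{1}{1+p}$, which the paper proves by the same expansion; you have merely inlined it. Your first implication also has the paper's skeleton: Theorem \ref{thm:BD} together with the Hessian inequality $\omega+dd^c\psi\leq q(-v)^{q-1}(\omega+dd^c v)$, and the observation that $v\leq -1$ makes this hold everywhere. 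Where you differ is the bookkeeping for why no mass escapes in the limit: the paper upgrades $E_p(v)<+\infty$ to $\int_X|\chi_1\circ v|\,|v|^p\omega_v^n<+\infty$ for a weight $\chi_1$ with $\chi_1(-\infty)=-\infty$, transfers this to a uniform $\chi_2$-energy bound for the envelope, and invokes the \cite{GZ07} criterion that such bounds survive decreasing limits and force membership in $\mathcal{E}_{\chi_2}(X,\omega)\subset\mathcal{E}(X,\omega)$; you instead attempt a hands-on tail-mass estimate along truncations.

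That tail estimate contains a genuine error. You bound the contribution over $\{v\leq -k\}$ by $q^nk^p\,{\rm Vol}_\omega\{v<-k\}$, i.e.\ you dominate $\int_{\{v\leq -k\}}\omega_{v_k}^n$ by the volume of the sublevel set. This inequality is false: $\omega_{v_k}^n$ is not dominated by $\omega^n$, since truncation concentrates the entire residual mass $1-\int_{\{v>-k\}}\omega_v^n$ on $\{v\leq-k\}$. For the radial potentials of Section \ref{sec:examples}, $v\sim-(-\log|z|)^{\alpha}$ with $0<\alpha<1$, this residual mass decays only like $k^{-n(1-\alpha)/\alpha}$, whereas ${\rm Vol}_\omega\{v<-k\}\sim e^{-2nk^{1/\alpha}}$; so neither the volume nor the super-polynomial decay provided by Theorem \ref{thm: Moser-Trudinger inequality} can give what you need. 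The step is fixable, but by Chebyshev on $\omega_v^n$ rather than by any volume bound: plurifine locality gives ${\bf 1}_{\{v>-k\}}\omega_{v_k}^n={\bf 1}_{\{v>-k\}}\omega_v^n$, and since $\omega_{v_k}^n(X)=\omega_v^n(X)=1$ one gets $\int_{\{v\leq-k\}}\omega_{v_k}^n=\int_{\{v\leq-k\}}\omega_v^n\leq k^{-p}\int_{\{v\leq-k\}}(-v)^p\omega_v^n$, whence $k^p\int_{\{v\leq-k\}}\omega_{v_k}^n\to0$ by dominated convergence because $E_p(v)<+\infty$. Two further remarks. First, Theorem \ref{thm:BD} is stated for smooth $h$, while your $\psi_k=-(-v_k)^q$ is not smooth; the paper sidesteps this by regularizing $v$ itself (Demailly approximation by smooth $\omega$-psh functions), so that the envelope theorem applies verbatim. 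Second, even after fixing the tail bound, converting "uniformly small tails of $\omega_{u_k}^n$" into "$u\in\mathcal{E}(X,\omega)$" requires a de la Vall\'ee Poussin-type argument producing a weight $\chi$ with $\chi(-\infty)=-\infty$ and $\sup_k\int_X|\chi\circ u_k|\,\omega_{u_k}^n<+\infty$ — which is precisely the paper's $\chi_1,\chi_2$ device. I recommend adopting that route outright: it disposes of the escaping-mass, the regularity, and the limit issues in one stroke.
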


 \begin{proof}
Assume that $v \in \mathcal{E}^p(X,\omega)$.  By approximation we can assume that $v$ is smooth. 
Set $q=1+ \frac{p}{n}$, $u=-(-v)^q$, and $\psi = P(u)$. We compute
 \begin{eqnarray*}
  \omega+dd^c u & =& \omega+q(-v)^{q-1} dd^c v -q(q-1) (-v)^{q-2} dv \wedge d^c v\\
  &\leq & [1-q(-v)^{q-1}] \omega+q(-v)^{q-1} (\omega+dd^c v) \leq  q(-v)^{q-1} (\omega+dd^c v) .
 \end{eqnarray*}
It then follows from Theorem \ref{thm:BD} that
$$
(\omega+dd^c \p)^n =1_{\{ u=P(u)\}} (\omega+dd^c u)^n  
= 1_{\{ u=P(u)\}} q^n |v|^{n(q-1)} (\omega+dd^c v)^n.
$$
Since $v\in \mathcal{E}^p(X,\omega)$, it follows from \cite{GZ07} 
that there exists  $C>0$ and an increasing function $\chi_1 : \mathbb{R}^- \rightarrow \mathbb{R}^{-}$ 
with $\chi_1(-\infty)=-\infty$, such that  for all $j$,
 	\[
 	\int_X |\chi_1\circ v| |v|^p (\omega+dd^c v)^n \leq C.
 	\]
 	
Consider $\chi_2:\mathbb{R}^- \rightarrow \mathbb{R}^{-}$ defined such that $\chi_1(t) = \chi_2 (-(-t)^{q}))$. Observe that $\chi_2$ is again an increasing function and $\chi_2(-\infty)=-\infty$. 
Now
\begin{eqnarray*}
 \int_X |\chi_2 \circ \psi | (\omega+dd^c \p)^n  &=&  \int_{\{u=P(u)\}} |\chi_2 \circ u| (\omega+dd^c u)^n  \\
 & \leq & q^n \int_X | \chi_2 \circ u |  (-v)^{n(q-1)} (\omega+dd^c v)^n \\
 &= & q^n \int_X |\chi_1 \circ v |  |v|^p (\omega+dd^c v)^n  \leq  Cq^n.
 \end{eqnarray*}
This shows that $\p\in{\mathcal E}_{\chi_2}(X,\omega)$ and in particular $\p\in{\mathcal E}(X,\omega)$.

For the reverse implication we assume that $\p:=P(-(-v)^s) \in \PSH(X,\omega)$  for some $s>1+p$ and set $w:= -(-\p)^{1/s}$.
 It follows from Lemma \ref{lem: Ep GZ07} below that $w\in \mathcal{E}^p(X,\omega)$,
 hence $v$ also belongs to $\mathcal{E}^p(X,\omega)$ since $v \geq w$.
\end{proof}

\begin{lemma}\label{lem: Ep GZ07}
Fix $p>0$ and $r<\frac{1}{1+p}$. If $u\in \PSH(X,\omega)$ and $u\leq -1$ then $-(-u)^r \in \mathcal{E}^{p}(X,\omega)$. 
\end{lemma}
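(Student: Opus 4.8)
The plan is to realize $w:=-(-u)^r$ as $\chi\circ u$ for the convex increasing weight $\chi(t)=-(-t)^r$ and to bound the $E_p$-energy of its truncations uniformly. First I would record the elementary properties of $\chi$: on $(-\infty,-1]$ one has $\chi'(t)=r(-t)^{r-1}\in(0,r]$ and $\chi''(t)=r(1-r)(-t)^{r-2}>0$, so $\chi$ is convex increasing with $0\le\chi'\le r<1$. Hence $w$ is again $\omega$-psh, satisfies $w\le-1$ and $w\ge u$, and (where $u$ is smooth)
\[
\omega_w=(1-\chi'(u))\,\omega+\chi'(u)\,\omega_u+\chi''(u)\,du\wedge d^cu,
\]
a sum of three positive terms. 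By the construction of the class $\mathcal{E}^p$ in \cite{GZ07} it suffices to bound $\sup_j E_p(w_j)$ for the bounded truncations $w_j:=\max(w,-j^r)=\chi\circ\max(u,-j)$ decreasing to $w$: such a bound yields $w\in\mathcal{E}^p(X,\omega)$ together with $\omega_{w_j}^n\to\omega_w^n$. Using Demailly's approximation \cite{Dem92} I may assume $u$ smooth, and set $u_j:=\max(u,-j)$.

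Next I would expand $\omega_{w_j}^n$. Since $du_j\wedge d^cu_j$ squares to zero, at most one gradient factor survives, so $\omega_{w_j}^n$ is a finite sum of the terms
\[
(\chi'(u_j))^k\,\omega^{n-k}\wedge\omega_{u_j}^k\ \ (0\le k\le n),\qquad \chi''(u_j)(\chi'(u_j))^k\,\omega^{n-1-k}\wedge\omega_{u_j}^k\wedge du_j\wedge d^cu_j\ \ (0\le k\le n-1),
\]
each carrying a bounded factor $(1-\chi'(u_j))^{\bullet}\le1$. Recalling $E_p(w_j)=\int_X(-u_j)^{rp}\,\omega_{w_j}^n$ and $\chi'(t)=r(-t)^{r-1}$, $\chi''(t)=r(1-r)(-t)^{r-2}$, every summand becomes an integral of a power $(-u_j)^m$ against one of these currents: for the pure terms $m=rp+k(r-1)$, and for the gradient terms $m=rp+(r-2)+k(r-1)$.

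The decisive step is to integrate by parts in each gradient term. Writing $(-u_j)^m\,du_j=dH(u_j)$ with $H(t)=-\frac{(-t)^{m+1}}{m+1}$ and using $dd^cu_j=\omega_{u_j}-\omega$, Bedford–Taylor integration by parts \cite{BT82} converts each gradient term into a combination of $\int_X(-u_j)^{m+1}\,\omega^{n-1-k}\wedge\omega_{u_j}^{k+1}$ and $\int_X(-u_j)^{m+1}\,\omega^{n-k}\wedge\omega_{u_j}^{k}$, where $m+1=r(1+p)-1-k(1-r)$. I would then split into two cases. The only term against $\omega^n$ with a positive power is the $k=0$ pure term $\int_X(-u_j)^{rp}\,\omega^n$; it is bounded by $\int_X(-u)^{rp}\,\omega^n<\infty$ uniformly in $j$, since $-u_j\le-u$ and $\omega$-psh functions lie in every $L^q(\omega^n)$. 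Every remaining term carries a power $(-u_j)^{\delta}$ with $\delta\le r(1+p)-1$ against a current of total mass $\le1$; the hypothesis $r<\frac{1}{1+p}$ forces $\delta<0$, so $(-u_j)^{\delta}\le1$ and the term is bounded by that mass. Summing yields $\sup_jE_p(w_j)<\infty$, hence $w\in\mathcal{E}^p(X,\omega)$.

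I expect the main obstacle to be the bookkeeping of the integrations by parts: one must verify that after moving $dd^c$ the accompanying power $(-u_j)^{\delta}$ never exceeds $r(1+p)-1$, which is exactly what pins down the threshold $r<\frac{1}{1+p}$ (note $m+1<0$ throughout, so no logarithmic degeneracy in $H$ arises). A secondary technical point is to justify the differentiation formula for $\omega_{w_j}$ and the integrations by parts for the merely bounded truncations $u_j$, which I would handle by regularization and Bedford–Taylor theory.
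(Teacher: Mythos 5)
Your proof is correct and takes essentially the same route as the paper's: both compute $\omega_w$ through the weight $\chi(t)=-(-t)^r$, expand the $n$-th power of $\omega_{w_j}$ into pure and single-gradient-factor terms, bound the pure terms using $r(1+p)-1<0$, and integrate the gradient terms by parts to land on the same negative exponent, concluding via the monotone-approximation criterion of \cite{GZ07}. The only differences are cosmetic---the paper uses smooth Demailly approximants rather than truncations, and groups all gradient terms before a single integration by parts---and your handling of the $k=0$ pure term $\int_X(-u_j)^{rp}\,\omega^n$ (bounded uniformly in $j$ via the $L^q$-integrability of quasi-psh functions) is, if anything, slightly more careful than the paper's.
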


The exponent $r$ is sharp: the function  $\varphi:=-(-\log|s|)^{r}$ (introduced in Section \ref{div Section}) 
 belongs to $ \mathcal{E}^{1}(X,\omega)$ iff $r<1/2$ \cite[Proposition 2.8]{Dn15}.

\begin{proof}
    Let $(u_j)$ be a decreasing sequence of smooth $\omega$-psh functions such that $u_j\searrow u$
and $u_j\leq -1$. Set $w_j:= -(-u_j)^r$.  A direct computation shows 
 \begin{eqnarray*}
\omega +dd^c w_j &=& \omega+ r |u_j|^{r-1} dd^c u_j+ r(1-r) |u_j|^{r-2} d u_j \wedge d^c u_j\\
 &=& \left(1-r |u_j|^{r-1}\right)\omega+ r |u_j|^{r-1} (\omega+dd^c u_j) \\
 &&+ r(1-r) |u_j|^{r-2} du_j \wedge d^c u_j\\
 & \leq & \left(1-r |u_j|^{r-1}\right)\omega+ |u_j|^{r-1} (\omega+dd^c u_j)+ |u_j|^{r-2} du_j \wedge d^c u_j,
 \end{eqnarray*}
using that $u_j\leq -1$ and $r<1$.
We now integrate on $X$ to obtain
\begin{align*}
\int_X (-w_j)^p & (\omega+dd^c w_j)^n \leq   \int_X |u_j|^{pr} (\omega + 	|u_j|^{r-1} (\omega+dd^c u_j))^n \\
&+ n \int_X  |u_j|^{pr+r-2} (\omega + 	|u_j|^{r-1} (\omega+dd^c u_j))^{n-1} \wedge du_j \wedge d^c u_j. 
\end{align*}
We bound the first term by 
\begin{align*}
	\int_X &  |u_j|^{pr}(\omega+|u_j|^{r-1} (\omega+dd^c u_j))^n \\ 
	& \leq  2^n \sum_{k=0}^n\int_X |u_j|^{pr}  (|u_j|^{r-1}(\omega +dd^c u_j))^{k} \wedge \omega^{n-k} \leq 2^n (n+1),
\end{align*} 
noticing that $r(p+1)-1<0$, hence $|u_j|^{r(p+1)-1}\leq 1$. 
For the second term we write: 
\begin{align*}
	|u_j|^{pr+r-2} & (\omega + 	|u_j|^{r-1}(\omega+ dd^c u_j))^{n-1} \wedge du_j \wedge d^c u_j \\
	& \leq  2^{n-1}|u_j|^{q} du_j \wedge d^c u_j \wedge \sum_{k=0}^{n-1} (\omega+dd^c u_j)^k\wedge \omega^{n-k-1}\\
	& = \frac{2^{n-1}}{(q+1)} d (-(-u_j)^{q+1}) \wedge d^c u_j \wedge \sum_{k=0}^{n-1}(\omega+dd^c u_j)^k\wedge \omega^{n-k-1},
\end{align*}
where $q:=pr+r-2<-1$.  Integrating by parts we obtain
\begin{align*}
	\int_X |u_j|^{pr+r-2} & (\omega + 	|u_j|^{r-1} (\omega+ dd^c u_j))^{n-1} \wedge du_j \wedge d^c u_j \\
	& \leq \frac{2^{n-1}}{(q+1)} \int_X  |u_j|^{q+1} dd^c u_j \wedge \sum_{k=0}^{n-1} (\omega+dd^c u_j)^k\wedge \omega^{n-k-1}\\
	& \leq \frac{2^{n-1}}{(q+1)}  \sum_{k=0}^{n-1} \int_X (\omega+dd^c u_j)^{k+1}\wedge \omega^{n-k-1} = \frac{n 2^{n-1}}{(q+1)}.
\end{align*}
Thus $\int_X (-w_j)^p (\omega+dd^c w_j)^n \leq C(n,p,r)$, proving that $w\in \mathcal{E}^p(X,\omega)$. 
\end{proof}

\section{Finite entropy potentials} 


In this Section we systematically study finite entropy potentials.
For each $B>0$ we set 
\[
\Ent_B := \{u \in \mathcal{E}^1(X,\omega) \; | \;  \sup_X u =-1, \Ent_{\omega^n}(\omega_u^n) \leq B \}.
\]
Recall that ${\rm Ent}_{\omega^n} (\omega_u^n):=\int_X f \log f\; \omega^n$
if  $\omega_u^n=f \omega^n$ is absolutely continuous with respect to $\omega^n$ and $\Ent_{\omega^n}(\omega_u^n)=+\infty$ otherwise.\\ 


\subsection{Continuity of the mean-value} 

Given a probability measure $\mu$ with finite entropy we denote by $L_{\mu}$ the map
\[ u \in \PSH(X,\omega) \mapsto  \int_X u \; d\mu.  \]

We consider the convex function $\chi: s \in \R^+ \mapsto (s+1) \log (s+1) -s \in \R^+$. Its conjugate convex function is
  $$
\chi^*: t \in \R^+ \mapsto \sup_{s>0} \{ st-\chi(s) \}=e^t-t-1 \in \R^+.
  $$
By definition these functions satisfy, for all $s,t>0$,
\begin{equation} \label{eq:conj}
st \leq \chi(s)+\chi^*(t).
\end{equation}
Such simple inequality will be used to prove the following:
\begin{prop}\label{prop: Lmu continuous}
Let $\mu=f\omega^n$ be a probability measure on $X$ with finite entropy $\int_X f\log f \omega^n<+\infty$. Then $L_\mu$ is well-defined and continuous.
\end{prop}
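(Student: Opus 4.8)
The plan is to combine the Young-type inequality \eqref{eq:conj} with the uniform exponential integrability of $\omega$-psh functions. Recall, as already used in the proof of Theorem \ref{thm: Moser-Trudinger inequality}, that there exist constants $c_0,C_0>0$ with $\int_X e^{c_0|\phi|}\,\omega^n\le C_0$ for every $\phi\in\PSH(X,\omega)$ such that $\sup_X\phi=0$. I first observe that $\int_X\chi(f)\,\omega^n<+\infty$: since $\chi\ge 0$ is bounded on $\{f\le 1\}$ and $\chi(f)\le 2(\log 2)\,f+2f\log f$ on $\{f>1\}$, the finiteness follows from $\int_X f\,\omega^n=1$ and $\Ent_{\omega^n}(\mu)<+\infty$. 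In particular $\chi(f)\in L^1(\omega^n)$, so its integral is absolutely continuous: for every $\e>0$ there is $\de>0$ with $\int_E\chi(f)\,\omega^n<\e$ whenever $\omega^n(E)<\de$. For well-definedness, fix $u\in\PSH(X,\omega)$ and put $v:=u-\sup_X u$, so $\sup_X v=0$. Applying \eqref{eq:conj} pointwise with $s=f$, $t=c_0|v|$, integrating, and using $\chi^*(t)\le e^t$, I obtain
\[
c_0\int_X |v|\,d\mu\le \int_X\chi(f)\,\omega^n+\int_X e^{c_0|v|}\,\omega^n\le \int_X\chi(f)\,\omega^n+C_0<+\infty .
\]
As $\mu$ is a probability measure, $L_\mu(u)=\int_X v\,d\mu+\sup_X u$ is finite, proving that $L_\mu$ is well-defined.

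For continuity, let $u_j\to u$ in $L^1(X)$ with $u_j,u\in\PSH(X,\omega)$. Since $\sup_X u_j\to\sup_X u$ for the $L^1$-topology (a standard consequence of Hartogs' lemma together with $\sup_X=\operatorname{ess\,sup}$ for $\omega$-psh functions), after subtracting these convergent constants I may assume $\sup_X u_j=\sup_X u=0$ and reduce to showing $L_\mu(u_j)\to L_\mu(u)$. I shall deduce this from Vitali's convergence theorem, the crucial point being the uniform $\mu$-integrability of the family $\{u_j\}$, namely $\lim_{M\to+\infty}\sup_j\int_{\{|u_j|>M\}}|u_j|\,d\mu=0$.

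To prove this, fix $\lambda\in(0,c_0)$ and apply \eqref{eq:conj} with $s=f$, $t=\lambda|u_j|$ to get
\[
\lambda\int_{\{|u_j|>M\}}|u_j|\,d\mu\le \int_{\{|u_j|>M\}}\chi(f)\,\omega^n+\int_{\{|u_j|>M\}}e^{\lambda|u_j|}\,\omega^n .
\]
The Chebyshev-type bound $\omega^n(\{|u_j|>M\})\le e^{-c_0M}\int_X e^{c_0|u_j|}\,\omega^n\le C_0\,e^{-c_0M}$ shows that these superlevel sets have $\omega^n$-measure tending to $0$ uniformly in $j$; hence the first integral tends to $0$ uniformly by the absolute continuity noted above, while the second is controlled by Hölder's inequality,
\[
\int_{\{|u_j|>M\}}e^{\lambda|u_j|}\,\omega^n\le \Big(\int_X e^{c_0|u_j|}\,\omega^n\Big)^{\lambda/c_0}\omega^n(\{|u_j|>M\})^{1-\lambda/c_0}\le C_0^{\lambda/c_0}\big(C_0 e^{-c_0M}\big)^{1-\lambda/c_0},
\]
which also tends to $0$ uniformly in $j$. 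This establishes the uniform $\mu$-integrability of $\{u_j\}$.

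Finally, since $u_j\to u$ in $L^1(\omega^n)$, every subsequence has a further subsequence converging $\omega^n$-almost everywhere, hence $\mu$-almost everywhere because $\mu\ll\omega^n$. Uniform integrability together with Vitali's theorem then gives $L_\mu(u_{j_k})\to L_\mu(u)$ along that subsequence, and the subsequence principle yields $L_\mu(u_j)\to L_\mu(u)$, completing the proof. I expect the uniform $\mu$-integrability to be the main obstacle: it is precisely where the finiteness of the entropy (through $\chi(f)\in L^1$ and the uniform smallness of $\omega^n(\{|u_j|>M\})$) must be balanced against the uniform exponential integrability of normalized $\omega$-psh functions.
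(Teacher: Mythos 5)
Your proof is correct, and while the well-definedness step is essentially the paper's (both combine the Fenchel--Young inequality \eqref{eq:conj}, with $s=f$ and $t$ a multiple of the potential, with the uniform exponential integrability of normalized $\omega$-psh functions), your continuity argument takes a genuinely different route. The paper normalizes $\sup_X\psi_j=\sup_X\psi=0$, extracts a subsequence converging a.e.\ and such that $e^{-2c\psi_j}\le g$ for some $g\in L^1(\omega^n)$ (this rests on the claim $\|e^{-2c\psi_j}-e^{-2c\psi}\|_{L^1}\to 0$, which is why the paper works with the stronger $4c$-exponential bound), applies \eqref{eq:conj} to $t=c|\psi_j-\psi|$ to produce the $\mu$-dominating function $\chi(f)+g+e^{2c|\psi|}$, and concludes by dominated convergence. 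You instead prove uniform $\mu$-integrability of the normalized family directly, via the Chebyshev bound $\omega^n(\{|u_j|>M\})\le C_0e^{-c_0M}$, the absolute continuity of $\int\chi(f)\,\omega^n$, and H\"older's inequality applied to $e^{\lambda|u_j|}$ with $\lambda<c_0$, and then invoke Vitali's theorem together with the subsequence principle. Your route needs only the plain $c_0$-exponential bound (no $4c$ trick, no dominated subsequence, no $L^1$-convergence of exponentials -- the step the paper passes over with ``by elementary inequalities''), and it proves something slightly stronger and reusable: for any finite-entropy $\mu$, the entire set $\{u\in\PSH(X,\omega):\sup_X u=0\}$ is uniformly $\mu$-integrable, with quantitative tails. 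The paper's argument is marginally shorter once one grants the convergence of the exponentials, but yours is more self-contained and more uniform.
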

\noindent Here $\PSH(X,\omega)$ is endowed with the $L^1$-topology.

\begin{proof}
Thanks to \cite[Theorem 2.50]{GZbook} we can ensure that there exist $c,C>0$ such that $\int_X e^{4c|\psi|} \omega^n \leq C$ for all $\psi\in \PSH(X,\omega)$ normalized with $\sup_X \psi=0$. Using \eqref{eq:conj} with $s=f(x)$ and $t=-c\psi(x)$, $x\in X$, we have
\[
c|\p| f \leq e^{c|\p|} + \chi(f). 
\]
Integrating over $X$ we see that $L_{\mu}$ is well-defined (since $f$ has finite entropy). We now prove that $L_{\mu}$ is continuous. 
 Assume that $\psi_j$ is a sequence in $\PSH(X,\omega)$ such that $\|\psi_j-\psi\|_{L^1(X)} \to 0$ for some $\psi\in \PSH(X,\omega)$.  Since $\sup_X \psi_j \to \sup_X \psi$, we can assume that $\sup_X \psi_j=\sup_X \psi=0$.  Up to extracting a subsequence, we can also assume that $\psi_j$ converges a.e. to $\psi$. By the choice of $c$ and elementary inequalities we can show that 
 $\|e^{-2c\psi_j}-e^{-2c\psi}\|_{L^1(X)} \to 0$.
 Thus, passing to a subsequence we can assume that $e^{-2c\psi_j} \leq g$ for some $g\in L^1(X)$. 
Applying \eqref{eq:conj} with $s=f(x)$ and $t=c|\psi_j-\psi|(x)$, $x\in X$, we obtain
 \[
 c|\psi_j-\psi| f \leq \chi(f) + e^{c|\p_j-\p|}
 \leq \chi(f) + e^{c|\p_j|}e^{c|\p|}
 \leq \chi(f)+ g+ e^{2c|\psi|}  \in L^1(X).
 \]
The dominated convergence theorem thus yields $\int_X |\psi_j-\psi| f \omega^n \to 0$. 
\end{proof}

\subsection{Compact Riemann surfaces} \label{sect: dimension 1}
In this section we treat the case of compact Riemann surfaces. We thus assume that $n=1$ and the set $\PSH(X,\omega)$ will be denoted by $\SH(X,\omega)$, where the latter is endowed with the $L^1$-topology. The results in this section are probably well known to  experts. We include them as a warm-up for the reader (this is Exercise 12.1, page 339 in \cite{GZbook}). 

\begin{lem} \label{lem: char dim 1}
Let $\mu=\omega+dd^c \f$, $\varphi \in \SH(X,\omega)$ be a probability measure. Then
\begin{enumerate}
\item $\f$ is bounded if and only if ${\rm SH}(X,\omega) \subset L^1(\mu)$;
\item $\varphi$ is continuous if and only if $L_{\mu}$ is continuous on $\SH(X,\omega)$. 
\end{enumerate}
\end{lem}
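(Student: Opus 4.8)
\emph{Strategy.} The engine behind both items is the symmetry of mutual energy. Writing $\nu_u := \omega + dd^c u$ for $u \in \SH(X,\omega)$, the identity I would establish first is
\[
\int_X u \, d\mu \;=\; \int_X \varphi \, d\nu_u \;+\; \int_X u\,\omega \;-\; \int_X \varphi\,\omega ,
\]
whose last two terms are finite since $\omega$-sh functions lie in $L^1(\omega)$. The cleanest route is through the Green function $G(\cdot,\cdot)$ of $(X,\omega)$, symmetric and bounded above: writing $U^\sigma(z):=\int_X G(z,w)\,d\sigma(w)$ for the Green potential of a probability measure $\sigma$, normalized so that $\omega + dd^c U^\sigma = \sigma$ and $\int_X U^\sigma\,\omega = 0$, one has $\varphi = U^\mu + \int_X\varphi\,\omega$ and $u = U^{\nu_u} + \int_X u\,\omega$ (as $dd^c$-primitives differing by a constant on the connected compact $X$). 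Then Fubini, legitimate because $G$ is bounded above, gives $\int_X U^{\nu_u}\,d\mu = \iint G\,d\mu\,d\nu_u = \int_X U^\mu\,d\nu_u$, and the displayed identity follows; alternatively I would integrate by parts on the truncations $u_j:=\max(u,-j)$ and pass to the limit by monotone convergence. In either form, $\int_X u\,d\mu$ is finite iff $\int_X \varphi\,d\nu_u$ is.

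For item (1) the forward direction is then immediate: if $|\varphi|\le M$ then $\bigl|\int_X\varphi\,d\nu_u\bigr|\le M$ for every $u$, so $u\in L^1(\mu)$ and $\SH(X,\omega)\subset L^1(\mu)$. For the converse I would argue by contraposition: if $\varphi$ is unbounded below, pick $x_k\in X$ with $\varphi(x_k)\le -4^k$ and set $\nu:=\sum_{k}2^{-k}\delta_{x_k}$, a probability measure. Its Green potential $u:=U^\nu=\sum_k 2^{-k}G(\cdot,x_k)$ is $\omega$-sh and lies in $L^1(\omega)$, hence $u\in\SH(X,\omega)$; but the symmetry identity yields $\int_X u\,d\mu = \sum_k 2^{-k}\varphi(x_k) + (\text{finite}) = -\infty$, so $u\notin L^1(\mu)$.

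For item (2) I would note first that continuity of $L_\mu$ on $\SH(X,\omega)$ presupposes $L_\mu$ is finite there, i.e. $\SH(X,\omega)\subset L^1(\mu)$; by (1) this already forces $\varphi$ bounded in both directions, so $\varphi$ is finite everywhere. Assuming $\varphi$ continuous, take $u_j\to u$ in $L^1(X)$ with $u_j,u\in\SH(X,\omega)$. Then $\nu_{u_j}\to\nu_u$ weakly (since $dd^c$ is continuous on distributions and all masses equal $1$, convergence extends from $C^\infty$ to $C^0$ by density), and as $\varphi$ is bounded and continuous, $\int_X\varphi\,d\nu_{u_j}\to\int_X\varphi\,d\nu_u$; combined with $\int_X u_j\,\omega\to\int_X u\,\omega$ and the symmetry identity, this gives $L_\mu(u_j)\to L_\mu(u)$, so $L_\mu$ is continuous.

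Conversely, assuming $L_\mu$ continuous, for $q\in X$ I would use $G_q:=G(\cdot,q)$ normalized by $\int_X G_q\,\omega=0$ and $\omega+dd^c G_q=\delta_q$, so $G_q\in\SH(X,\omega)$. By symmetry of $G$ one gets $L_\mu(G_q)=\int_X G(z,q)\,d\mu(z)=U^\mu(q)=\varphi(q)-\int_X\varphi\,\omega$, a pointwise equality for every $q$ (two $\omega$-sh functions agreeing a.e.\ agree everywhere). Since $q\mapsto G_q$ is continuous from $X$ into $(\SH(X,\omega),L^1)$ — a classical property extracted from the logarithmic singularity of $G$ — the composition $q\mapsto L_\mu(G_q)$ is continuous, and hence so is $\varphi=L_\mu(G_{\,\cdot})+\int_X\varphi\,\omega$. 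The main technical point to pin down is the energy symmetry identity together with its Fubini (or integration-by-parts/truncation) justification in the presence of $-\infty$ singularities; once this is secured, both equivalences drop out by feeding in the right test objects — Dirac masses for (1) and the continuously varying Green functions $G_q$ for (2) — with the continuity of $q\mapsto G_q$ in $L^1$ a secondary point needing care.
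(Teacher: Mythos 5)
Your proposal is correct, and its skeleton is the same as the paper's: both rest on the integration-by-parts identity $\int_X u\,d\mu=\int_X\varphi\,d\nu_u+\int_X u\,\omega-\int_X\varphi\,\omega$ (in your notation $\nu_u=\omega+dd^cu$; the paper invokes Stokes' theorem directly where you go through Green potentials and Fubini), and both test it against Dirac masses. The genuine difference is in the converse of (1) and in which machinery carries the technical weight. There the paper argues directly: letting $\psi_a$ be the normalized potential of $\delta_a$, it gets a uniform bound $\int_X\psi_a\,d\mu\geq -C_\mu$ from the compactness result \cite[Proposition 2.7]{GZ05}, and Stokes then gives $\varphi(a)\geq -C_\mu+\int_X\varphi\,\omega$ for every $a$. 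You instead argue by contraposition, assembling the single potential $u=\sum_k 2^{-k}G(\cdot,x_k)$ with $\varphi(x_k)\leq -4^k$ and showing $\int_X u\,d\mu=-\infty$; this is self-contained (Tonelli applies to the nonnegative kernel $C-G$, so no compactness input is needed), at the price of setting up the Green-function/Riesz formalism on the surface --- classical, but it must be available. In effect your series construction re-proves, in this one-dimensional setting, the uniform boundedness of $L_\mu$ on compact families that the paper imports from \cite{GZ05}. For (2) the two arguments essentially coincide: your $G_q$ is the paper's $\psi_q$ up to an additive normalization, your identity $L_\mu(G_q)=\varphi(q)-\int_X\varphi\,\omega$ is the paper's Stokes identity, and the direction ``$\varphi$ continuous $\Rightarrow L_\mu$ continuous'' is handled identically in both, via weak convergence of the probability measures $\nu_{u_j}$ tested against the continuous bounded function $\varphi$. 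The one statement you leave as classical --- continuity of $q\mapsto G_q$ from $X$ to $L^1(X)$ --- is precisely what the paper proves at this point, deducing $\psi_{a_j}\to\psi_a$ in $L^1(X)$ from the weak convergence $\delta_{a_j}\to\delta_a$, compactness of normalized potentials, and uniqueness of the potential solving $\omega+dd^c\psi=\delta_a$; to make your write-up complete you should either reproduce that argument or verify the $L^1$-continuity directly from the local expansion $G(z,q)=\log|z-q|+O(1)$.
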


This characterization does not hold when $n\geq 2$:
if $\mu=(\omega+dd^c \f)^n$
has finite entropy then  $\PSH(X,\omega) \subset L^1(\mu)$ and $L_{\mu}$ is continuous on $\PSH(X,\omega)$  
but this does not necessarily imply that $\f$ is bounded 
(see Section \ref{sec:examples}).

\begin{proof} We prove (1). 
  Assume first that $\f$ is bounded and fix $\p \in {\rm SH}(X,\omega)$.
We want to show that $\int_X |\p| \, d\mu <+\infty$. Since $\p$ is bounded from above, we can assume without
loss of generality that $\p \leq 0$. Since $\f$ is bounded from below, we can assume $\f \geq 0$. Now by Stokes theorem,
$$
\int (-\p) \, d\mu=\int (-\p) \, \omega+\int  \f  (-dd^c \p)
\leq \int (\f-\p) \, \omega <+\infty,
$$
since $\f \geq 0$ and $-dd^c \p \leq \omega$. This shows ${\rm SH}(X,\omega) \subset L^1(\mu)$.

\smallskip

Assume now that ${\rm SH}(X,\omega) \subset L^1(\mu)$.
We need to show that $\f$ is bounded  from below.
Let $\delta_a=\omega+dd^c \psi_a$, with $\p_a \in {\rm SH}(X,\omega)$ and $\sup_X \p_a =0$, be the Dirac mass at a point $a \in X$. 
By \cite[Proposition 2.7]{GZ05}, there there exists $C_{\mu}>0$ such that
$$
\forall a \in X, \;  \int_X \p_a d\mu   \geq -C_{\mu}.
$$
Integrating by parts we therefore obtain that for all $a \in X$,
$$
\f(a)=\int_X \f \, \delta_a=\int_X (\f -\p_a) \omega+\int_X \p_a \, d\mu
\geq -C_{\mu}+\int_X \f \omega,
$$
hence $\f$ is bounded.

We next prove (2). Let $(a_j) \in X^{\N}$ be a sequence of points converging to $a \in X$. With the same notations as above,
it follows from Stokes theorem that
$$
\int_X \p_{a_j} \, d\mu -\int_X \p_a \, d\mu=\int_X (\p_{a_j}-\p_a) \, \omega+[\f(a_j)-\f(a)].
$$
We claim that $\psi_{a_j}$ converges to $\p_a$ in $L^1$. Indeed, by construction we have that $\delta_{a_j}=\omega+dd^c \psi_{a_j}$ weakly converges to $\delta_a=\omega+dd^c \psi_{a}$. On the other hand by compactness \cite[Proposition 2.7]{GZ05}, $\p_{a_j}$ converges in $L^1$ to a potential $\tilde{\psi}_a$ solving $\delta_a=\omega+dd^c \tilde{\psi}_{a}$. By uniqueness we get $\tilde{\psi}_a=\p_a$. This proves the claim. Hence, if the mapping $u\in \SH(X,\omega) \mapsto \int_X u \, d \mu \in \R$ is continuous, so is $\f$.

Conversely assume $\f$ is continuous. Let $(\p_j)$ be a sequence of $\omega$-psh functions that converges
in $L^1(X)$ to $\p \in \SH(X,\omega)$. Then 
$$
\int_X \p_j \, d\mu -\int_X \p \, d\mu=\int_X (\p_j-\p) \, \omega+ \left[ \int_X \f (\omega+dd^c \p_j)-\int_X \f (\omega+dd^c \p) \right].
$$
Now $\omega+dd^c \p_j$ converges to $\omega+dd^c \p$ in the sense of distributions hence in the sense
of Radon measures, since these are probability measures. We infer that
$\int_X \f (\omega+dd^c \p_j) \rightarrow \int_X \f (\omega+dd^c \p)$ hence $\p \mapsto \int \p \, d\mu$ is continuous.
\end{proof}

 As a direct consequence of Lemma \ref{lem: char dim 1} and Proposition \ref{prop: Lmu continuous} we obtain:
\begin{coro}
If $\mu=\omega+dd^c \varphi$ has finite entropy then $\varphi$ is continuous. 
\end{coro}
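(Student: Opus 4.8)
The plan is to deduce the statement as an immediate consequence of the two preceding results, once the hypotheses are matched correctly. We work throughout in complex dimension $n=1$, so that $\PSH(X,\omega)=\SH(X,\omega)$ and the Monge-Amp\`ere measure of $\varphi$ is simply $\omega_\varphi^1=\omega+dd^c\varphi=\mu$. Note that $\mu$ is indeed a probability measure: by the normalization $\int_X\omega=1$ and Stokes' theorem, $\int_X(\omega+dd^c\varphi)=\int_X\omega=1$.

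First I would observe that the finite entropy hypothesis forces $\mu$ to be absolutely continuous with respect to $\omega$. Indeed, by the convention recalled in the definition of $\Ent_{\omega^n}$, one sets $\Ent_{\omega^n}(\mu)=+\infty$ whenever $\mu$ is not absolutely continuous; so finiteness of the entropy means $\mu=f\omega$ for some $0\le f\in L^1(X,\omega)$ with $\int_X f\log f\,\omega<+\infty$. This is exactly the standing assumption of Proposition \ref{prop: Lmu continuous}, which therefore applies and yields that the mean-value map $L_\mu:u\mapsto\int_X u\,d\mu$ is well-defined and continuous on $\SH(X,\omega)=\PSH(X,\omega)$ endowed with the $L^1$-topology.

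It then remains to invoke the second assertion of Lemma \ref{lem: char dim 1}, which states that $\varphi$ is continuous if and only if $L_\mu$ is continuous on $\SH(X,\omega)$. Since the continuity of $L_\mu$ has just been established, the equivalence immediately gives that $\varphi$ is continuous, completing the argument. There is no genuine obstacle here: the entire content lies in Proposition \ref{prop: Lmu continuous} and Lemma \ref{lem: char dim 1}, and the only point requiring care is the verification that the finite entropy assumption both guarantees absolute continuity (so that Proposition \ref{prop: Lmu continuous} is applicable) and matches the regularity criterion of Lemma \ref{lem: char dim 1}, which is precisely the bookkeeping carried out above.
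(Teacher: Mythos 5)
Your proof is correct and follows exactly the route the paper intends: the paper states this corollary as a direct consequence of Proposition \ref{prop: Lmu continuous} and Lemma \ref{lem: char dim 1}, and your argument simply spells out the bookkeeping (the convention that finite entropy forces absolute continuity, and the application of part (2) of the lemma). Nothing is missing.
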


It is also worth to mention that Example \ref{ex: inj_nocompact} below shows that the injection $\Ent_B(X,\omega) \hookrightarrow \mathcal{C}^0(X)$, when $n=1$, is not compact.

\subsection{Higher dimensional compact setting}\label{sect: higher dimension}

\begin{theorem}
	\label{thm: Entropy implies Ep and integrability} 
	Fix $B>0$ and set $p=\frac{n}{n-1}$. 
	There exist $c,C>0$ depending on $B,X,\omega,n$ such that, for all $\varphi\in \Ent_B$ we have 
	\[
	\int_X e^{c (-\varphi)^p} \omega^n \leq C\, \ \text{and}\ E_p(\varphi) \leq C. 
	\]
	In particular, $\Ent(X,\omega) \subset \mathcal{E}^{\frac{n}{n-1}}(X,\omega)$. 
 \end{theorem}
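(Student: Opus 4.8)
The plan is to set $\mu=\omega_{\varphi}^n=f\omega^n$ and run a self-improving bootstrap that couples the Moser--Trudinger inequality of Theorem \ref{thm: Moser-Trudinger inequality} with the convex-duality inequality \eqref{eq:conj} controlling the entropy. The whole scheme rests on one arithmetic coincidence: for the critical exponent $p=\frac{n}{n-1}$ one has $1+\frac{p}{n}=p$, so the exponential integrability produced by Theorem \ref{thm: Moser-Trudinger inequality} is governed by the very power $|\varphi|^{p}$ that defines $E_p$. This is exactly what will let the estimate feed back into itself.

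First I would record the entropy side. With $\chi(s)=(s+1)\log(s+1)-s$, the hypotheses $\int_X f\log f\,\omega^n\le B$ and $\int_X f\,\omega^n=1$ give, by a routine estimate, $\int_X\chi(f)\,\omega^n\le B+C_0$ for a universal $C_0$. Applying \eqref{eq:conj} with $s=f(x)$ and $t=t_0(-\varphi)^{p}(x)$ for any $t_0>0$, and using $\chi^*(y)=e^y-y-1\le e^y$, yields the master inequality
\[
t_0\int_X(-\varphi)^{p}\,d\mu \;\le\; B+C_0+\int_X e^{\,t_0(-\varphi)^{p}}\,\omega^n .
\]
Since the left-hand side is $t_0E_p(\varphi)$, this bounds the energy by a volume integral of $e^{t_0(-\varphi)^p}$, which Theorem \ref{thm: Moser-Trudinger inequality} will control once $t_0$ is chosen equal to the Moser--Trudinger exponent.

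The difficulty, and the main obstacle, is circularity: Theorem \ref{thm: Moser-Trudinger inequality} applies only to $\varphi\in\mathcal{E}^{p}(X,\omega)$, which is what we want to prove. I would break this by truncation. Set $\varphi_N:=\max(\varphi,-N)$, a bounded potential with $\sup_X\varphi_N=-1$ and $\varphi_N\le-1$, so that $\varphi_N\in\mathcal{E}^{p}(X,\omega)$ automatically and $M_N:=E_p(\varphi_N)<+\infty$. The crucial bookkeeping step is the identity
\[
E_p(\varphi_N)\;=\;\int_X(-\varphi_N)^{p}\,d\mu ,
\]
which I would justify via the construction of $\mu_{\varphi}$ in Section \ref{sec:prelim}: the measures $\mathbf{1}_{\{\varphi>-N\}}\omega_{\varphi_N}^n$ increase to $\mu$, so $\omega_{\varphi_N}^n=\mu$ on $\{\varphi>-N\}$, while the complementary masses of $\omega_{\varphi_N}^n$ and $\mu$ agree (both are probability measures) and there $-\varphi_N\equiv N$. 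Now applying Theorem \ref{thm: Moser-Trudinger inequality} to $\varphi_N$ and using $1+\frac{p}{n}=p$ gives
\[
\int_X\exp\left(c\,M_N^{-1/n}(-\varphi_N)^{p}\right)\omega^n\le C .
\]
Feeding $t_0=c\,M_N^{-1/n}$ into the master inequality written for $\varphi_N$, and using the displayed identity, produces $c\,M_N^{-1/n}M_N\le B+C_0+C$, i.e. $M_N^{(n-1)/n}\le (B+C_0+C)/c$. Hence $M_N\le C''$ uniformly in $N$, with $C''=\big((B+C_0+C)/c\big)^{n/(n-1)}$. Letting $N\to\infty$, monotone convergence ($(-\varphi_N)^{p}\nearrow(-\varphi)^{p}$) upgrades $\int_X(-\varphi_N)^p\,d\mu=M_N$ to $E_p(\varphi)=\int_X(-\varphi)^p\,d\mu\le C''$, so $\varphi\in\mathcal{E}^{p}(X,\omega)$; after normalizing $\sup_X\varphi=-1$ this also gives $\Ent(X,\omega)\subset\mathcal{E}^{\frac{n}{n-1}}(X,\omega)$.

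Finally, the exponential bound follows by invoking Theorem \ref{thm: Moser-Trudinger inequality} once more, now legitimately on $\varphi$ itself. Since $E_p(\varphi)\le C''$, the exponent coefficient satisfies $c\,E_p(\varphi)^{-1/n}\ge c\,(C'')^{-1/n}=:c'>0$, so monotonicity of $\exp$ yields $\int_X e^{\,c'(-\varphi)^{p}}\omega^n\le\int_X\exp\left(c\,E_p(\varphi)^{-1/n}(-\varphi)^{p}\right)\omega^n\le C$, with $c',C$ depending only on $B,X,\omega,n$. I expect the only genuinely pluripotential-theoretic input beyond Theorem \ref{thm: Moser-Trudinger inequality} to be the truncation identity $E_p(\varphi_N)=\int_X(-\varphi_N)^p\,d\mu$, which is what allows the bootstrap to close without assuming a priori that $\varphi\in\mathcal{E}^{p}(X,\omega)$.
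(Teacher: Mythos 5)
Your proof is correct, and its analytic engine --- coupling Theorem \ref{thm: Moser-Trudinger inequality} with the Young-type inequality \eqref{eq:conj} and the arithmetic coincidence $1+\tfrac{p}{n}=p$ for $p=\tfrac{n}{n-1}$ --- is exactly the paper's. Where you genuinely diverge is in how the circularity (Theorem \ref{thm: Moser-Trudinger inequality} applies only to potentials already known to lie in $\mathcal{E}^p$) gets broken. The paper's Step 1 applies the Moser--Trudinger inequality to an \emph{arbitrary} test function $v\in\mathcal{E}^p(X,\omega)$, deduces $\mathcal{E}^p(X,\omega)\subset L^p(X,\mu)$, and then invokes the characterization theorem \cite[Theorem C]{GZ07} (a probability measure is the Monge--Amp\`ere measure of an $\mathcal{E}^p$ potential if and only if $\mathcal{E}^p\subset L^p(\mu)$) to conclude $\varphi\in\mathcal{E}^p(X,\omega)$; only afterwards, in Step 2, does it apply Moser--Trudinger to $\varphi$ itself to extract the uniform bounds. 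You instead bootstrap on the canonical truncations $\varphi_N=\max(\varphi,-N)$, which lie in $\mathcal{E}^p$ for free, and close the loop via the identity $E_p(\varphi_N)=\int_X(-\varphi_N)^p\,d\mu$; this identity is indeed valid, by plurifine locality built into the construction of $\mu_\varphi$ recalled in Section \ref{sec:prelim} ($\omega_{\varphi_N}^n$ and $\mu$ agree on $\{\varphi>-N\}$, both are probability measures, and $-\varphi_N\equiv N$ on the complement), and monotone convergence then passes the uniform bound to $\varphi$. Your route buys independence from the deep characterization in \cite{GZ07}: the only external inputs are the Moser--Trudinger inequality and the locality of the non-pluripolar Monge--Amp\`ere operator, and the quantitative energy bound $E_p(\varphi)\leq C$ comes out in a single pass rather than being split into a qualitative membership step plus a quantitative step. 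What the paper's route buys is brevity given the \cite{GZ07} machinery (no truncation bookkeeping), plus the byproduct $\mathcal{E}^p(X,\omega)\subset L^p(X,\mu)$, which is of independent interest. Both arguments then obtain the exponential estimate identically, by re-applying Theorem \ref{thm: Moser-Trudinger inequality} to $\varphi$ once $E_p(\varphi)$ is under control.
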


\begin{proof}
	Fix $B>0$ and $\varphi \in \Ent_B$ with $\omega_{\varphi}^n =f\omega^n$.

%
  \smallskip
  \noindent    {\it Step 1: the function $\f$ belongs to 
  $ {\mathcal E}^p(X,\omega)$.}  \\
We claim that $\mathcal{E}^p(X,\omega) \subset L^p(X,\mu)$. Indeed, fix $v \in \mathcal{E}^p(X,\omega)$ with $\sup_X v=-1$ and observe that $1+\frac{p}{n} = p$. It follows from Theorem \ref{thm: Moser-Trudinger inequality} that for some $c>0$ small enough, 
\begin{equation}\label{unif_bound_exp}
\int_X e^{c |v|^p} \omega^n <+\infty.
\end{equation} 

We apply \eqref{eq:conj} with $s=f(x)$ and $t=c|v(x)|$.
This yields
\[
\int_X c|v|^p \omega_\f^n= \int_X c |v|^p f \omega^n \leq \int_X \chi\circ f \omega^n +\int_X (e^{c |v|^p}-c |v|^p-1) \omega^n,
\]
where the first integral is finite since $f$ has finite entropy,
while the second is bounded thanks to \eqref{unif_bound_exp} and the integrability properties of qpsh functions. 
This means that $\int_X |v|^p \omega_\f^n<+\infty$, proving the claim. By \cite[Theorem C]{GZ07} we can thus infer that $\f\in \mathcal{E}^p(X,\omega)$. 
\smallskip

\noindent  {\it Step 2: Integrability of $ e^{ |\f|^p}$ and energy bound.} \\ 
Using the H\"older-Young inequality  again we see that 
 \begin{eqnarray*}
  	 \int_X c \frac{|\f|^{p}}{ E_p(\f)^{1/n}}   f \omega^n 
  	 & \leq & \int_X \left(e^{c \frac{|\f|^{p}}{ E_p(\f)^{1/n}}} -c \frac{|\f|^{p}}{ E_p(\f)^{1/n}}-1 \right)\omega^n 
  	 + \int_X \chi\circ f \omega^n\\
  	 &\leq & C_1.
  \end{eqnarray*}
 Theorem \ref{thm: Moser-Trudinger inequality} insures that $C_1>0$ depends only on $X$, $\omega, n,$ and $\Ent(f)$. 
From the above inequality we get $c E_p(\f)^{1-1/n} \leq C_1$,
 which  yields
 $E_p(\f) \leq C_2$.
 Invoking Theorem \ref{thm: Moser-Trudinger inequality} again we obtain 
  \[
  \int_X e^{\gamma |\f|^p} \omega^n \leq C_3,
  \]
where $\gamma=c\, C_2^{-1/n}>0$ is a uniform constant.
 \end{proof}

The examples from Section \ref{sec:examples} show that the exponent $\frac{n}{n-1}$ is sharp.

\begin{theorem}
	\label{thm: compact injection}
	The set $\Ent_B$ is compact in $(\mathcal{E}^p(X,\omega),d_p)$, for any $p<\frac{n}{n-1}$. 
\end{theorem}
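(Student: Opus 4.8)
The plan is to prove compactness via the sequential criterion: I will show that every sequence $(\varphi_j) \subset \Ent_B$ admits a subsequence converging in $(\mathcal{E}^p(X,\omega), d_p)$ to a limit that again lies in $\Ent_B$. The engine of the argument is Theorem \ref{thm: Entropy implies Ep and integrability}, which provides a uniform bound $E_{n/(n-1)}(\varphi_j) \leq C$ together with uniform exponential integrability, where $C$ depends only on $B$. Since $\sup_X \varphi_j = -1$, the family is relatively compact in the $L^1$-topology, so after extraction $\varphi_j \to \varphi$ in $L^1$ with $\varphi \in \PSH(X,\omega)$ and $\sup_X \varphi = -1$.

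First I would identify the limit and check that it stays in $\Ent_B$. Invoking the compactness of $\Ent(X,\omega) \hookrightarrow \mathcal{E}^1(X,\omega)$ from \cite[Theorem 2.17]{BBEGZ19}, a further subsequence converges in $d_1$ to some element of $\mathcal{E}^1(X,\omega)$; as $d_1$-convergence forces convergence in capacity, hence $L^1$-convergence under the uniform normalization, this limit must coincide with $\varphi$. Thus $\varphi \in \mathcal{E}^1(X,\omega)$ and $\omega_{\varphi_j}^n \to \omega_\varphi^n$ weakly. Lower semicontinuity of the entropy under weak convergence of measures then yields $\Ent_{\omega^n}(\omega_\varphi^n) \leq \liminf_j \Ent_{\omega^n}(\omega_{\varphi_j}^n) \leq B$, so $\varphi \in \Ent_B$. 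Applying Theorem \ref{thm: Entropy implies Ep and integrability} to $\varphi$ itself gives in addition $E_{n/(n-1)}(\varphi) \leq C$.

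The heart of the proof is upgrading this $d_1$-convergence to $d_p$-convergence for $p < \frac{n}{n-1}$, which I would carry out by a uniform integrability estimate exploiting the higher energy bound. By \eqref{comparison} it suffices to prove $\int_X |\varphi_j - \varphi|^p (\omega_{\varphi_j}^n + \omega_\varphi^n) \to 0$. Writing $q = \frac{n}{n-1} > p$, I split each integral over $\{|\varphi_j - \varphi| > M\}$ and its complement. On the far set, Chebyshev produces a factor $M^{p-q}$ times $\int_X |\varphi_j - \varphi|^q \omega_{\varphi_j}^n$, and the latter is uniformly bounded because $E_q(\varphi_j), E_q(\varphi) \leq C$ and the standard energy estimates of \cite{GZ07,Dar15} control the cross term $\int_X |\varphi|^q \omega_{\varphi_j}^n$ by the energies; hence the far contribution is $\leq \varepsilon$ once $M$ is large, uniformly in $j$. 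On the near set, $|\varphi_j - \varphi|^p \leq M^{p-1} |\varphi_j - \varphi|$, and $\int_X |\varphi_j - \varphi| (\omega_{\varphi_j}^n + \omega_\varphi^n) \to 0$ by the case $p=1$ of \eqref{comparison} together with the $d_1$-convergence; so for fixed $M$ the near contribution tends to $0$. Letting $j \to \infty$ and then $M \to \infty$ gives $\int_X |\varphi_j - \varphi|^p (\omega_{\varphi_j}^n + \omega_\varphi^n) \to 0$, whence $d_p(\varphi_j, \varphi) \to 0$ by the lower bound in \eqref{comparison}. A subsequence-of-subsequences argument then promotes this to the full sequence and establishes compactness.

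The step I expect to be the main obstacle is the uniform integrability in the last paragraph, and specifically the uniform control of the cross term $\int_X |\varphi|^q \omega_{\varphi_j}^n$ against the varying Monge-Amp\`ere measures $\omega_{\varphi_j}^n$. This is precisely where the uniform higher energy bound $E_{n/(n-1)} \leq C$ from Theorem \ref{thm: Entropy implies Ep and integrability} is indispensable, and where I would rely on the fundamental energy comparison inequalities of the $\mathcal{E}^p$-theory to keep the estimate uniform in $j$.
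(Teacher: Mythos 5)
Your argument is correct and follows the paper's skeleton --- extract a $d_1$-convergent subsequence via \cite[Theorem 2.17]{BBEGZ19} together with \eqref{comparison}, keep the limit in $\Ent_B$ by lower semicontinuity of the entropy \cite[Proposition 2.10]{BBEGZ19}, then upgrade to $d_p$-convergence through a uniform bound on the $\frac{n}{n-1}$-th moments of $|\varphi_j-\varphi|$ against $g_j\omega^n:=\omega_{\varphi_j}^n+\omega_\varphi^n$ --- but it deviates at the two technical points, and the comparison is instructive. Where you truncate at level $M$ and apply Chebyshev, the paper interpolates by H\"older: writing $p=r+(1-r)\tfrac{n}{n-1}$ with $r\in(0,1)$, it bounds $\int_X|\varphi_j-\varphi|^p g_j\omega^n$ by $\left(\int_X|\varphi_j-\varphi|g_j\omega^n\right)^r\left(\int_X|\varphi_j-\varphi|^{\frac{n}{n-1}}g_j\omega^n\right)^{1-r}$; the two devices are interchangeable (and both implicitly need $p\geq 1$, the only range where $d_p$ is defined). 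The substantive difference is how the $\frac{n}{n-1}$-moment is bounded uniformly in $j$. The paper uses entropy duality: $g_j$ has uniformly bounded entropy, Theorem \ref{thm: Entropy implies Ep and integrability} gives $\int_X e^{c|\varphi_j|^{n/(n-1)}}\omega^n\leq C$, and Young's inequality \eqref{eq:conj} with $s=g_j$, $t=c|\varphi_j|^{n/(n-1)}$ does the rest, so no cross-term energy estimate is ever needed. You instead appeal to ``standard energy estimates'' of \cite{GZ07,Dar15} to control $\int_X|\varphi|^{n/(n-1)}\omega_{\varphi_j}^n$; such fundamental inequalities do exist, so this is not a gap, but it is the one soft spot of the write-up and should be made precise. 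A clean way to do so with only tools quoted in the paper: setting $q=\frac{n}{n-1}$ and using \eqref{comparison} (with the constant function $-1$ as reference, noting $|\varphi_j+1|\leq|\varphi_j|$) together with the triangle inequality for $d_q$,
\[
\int_X|\varphi_j-\varphi|^q g_j\omega^n\leq C\, d_q(\varphi_j,\varphi)\leq C'\left(E_q(\varphi_j)+E_q(\varphi)+\int_X|\varphi_j|^q\omega^n+\int_X|\varphi|^q\omega^n\right),
\]
and all four terms are uniformly bounded by Theorem \ref{thm: Entropy implies Ep and integrability} (the last two via exponential integrability, since $ct\leq e^{ct}$). In short, your route is more modular --- it only uses that $\Ent_B$ is energy-bounded and exponentially integrable --- while the paper's is more self-contained and exploits once more the entropy of the measures themselves. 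One small correction: there is no need (and in general no way) to ``promote to the full sequence''; different subsequences may have different limits, and sequential compactness only requires that every sequence in $\Ent_B$ admit a $d_p$-convergent subsequence with limit in $\Ent_B$, which your argument already provides.
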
  
 
\begin{proof} 
Fix $p<r:=\frac{n}{n-1}$, and $B>0$. 
 Let $\varphi_j$ be a sequence in $\Ent_B$.  
 By \cite[Theorem 2.17]{BBEGZ19}  and \eqref{comparison} 
 there exists a subsequence of $\varphi_j$, still denoted by $\varphi_j$, such that $d_1(\varphi_j,\varphi) \to 0$ as $j\to +\infty$ for some $\varphi \in \mathcal{E}^1(X,\omega)$ with $\sup_X \varphi=-1$. If we write $g_j\omega^n = (\omega+dd^c \varphi_j)^n + (\omega+dd^c \varphi)^n$ then  
 \[
 \int_X |\varphi_j-\varphi| g_j \omega^n \to 0.
 \]
 By lower semicontinuity of the entropy \cite[Proposition 2.10]{BBEGZ19} we also have that ${\rm Ent}_{\omega^n}(\omega_{\varphi}^n) \leq B$, hence by Theorem \ref{thm: Entropy implies Ep and integrability}, $\varphi\in \mathcal{E}^{\frac{n}{n-1}}(X,\omega)$.  
We want to prove that $d_p(\varphi_j,\varphi)\to 0$ which by \eqref{comparison} is equivalent to showing that 
 \[
 \lim_{j\to +\infty} \int_X   |\varphi_j-\varphi|^p g_j \omega^n =0.
 \] 
 
 Fix $r\in (0,1)$ such that $(1-r)\frac{n}{n-1} +r=p$, that is $p=\frac{n-r}{n-1}$. Using the H\"older inequality with exponent $\frac{1}{r}$ and $\frac{1}{1-r}$ we obtain 
 \begin{align}\label{compactness ineq}
     \int_X |\varphi_j-\varphi|^p g_j \omega^n & = \int_X |\varphi_j-\varphi|^{r+(1-r)\frac{n}{n-1}} g_j \omega^n \nonumber \\
     & \leq  \left (\int_X |\varphi_j-\varphi| g_j \omega^n \right )^r  \left (\int_X |\varphi_j-\varphi|^{\frac{n}{n-1}} g_j \omega^n\right )^{1-r}.
 \end{align}
Now, $|\f_j-\f|^{\frac{n}{n-1}}\leq C_1 (|\f_j|^{\frac{n}{n-1}}+|\f|^{\frac{n}{n-1}})$ and $g_j \omega^n$ has finite entropy. 
By H\"older-Young inequality and Theorem \ref{thm: Entropy implies Ep and integrability} we   infer  
\[
\int_X c|\f_j|^{\frac{n}{n-1}}g_j\omega^n\leq \int_X \chi\circ g_j \omega^n +\int_X (e^{c |\f_j|^{\frac{n}{n-1}}}-c |\f_j|^{\frac{n}{n-1}}-1) \omega^n\leq C.
\]
Similarly we have $\int_X |\f|^{\frac{n}{n-1}}g_j\omega^n\leq C$. 
The second factor  in \eqref{compactness ineq} is thus bounded while the first one converges to $0$ as $j\to +\infty$. 
\end{proof}

The injection $\Ent(X,\omega) \hookrightarrow \mathcal{E}^{\frac{n}{n-1}}(X,\omega)$ is however not compact:
 
\begin{exa}\label{ex: inj_nocompact}
Let $\chi: \R \rightarrow \R$ be a smooth increasing convex function such that
$\chi(t) =0 \text{ for } t \leq -\log 2$, $\chi(t) =  t$ for $t \geq \log 2$, and $\chi$ is strictly convex
in $(-\log 2, \log 2)$.  
We fix $\e_j>0$ a sequence decreasing to zero, $C_j$ a sequence increasing to $+\infty$, and we set
$$
\p_j(z):=\e_j \chi \circ L_j(z)-\e_j C_j,
$$
where $L_j(z):=  \log |e^{C_j} z|
=C_j+L(z)$.
The functions $\p_j$ are 
 psh in $\C^n$, since
 $$ 
 dd^c \psi_j= \varepsilon_j \chi''\circ L_j \, dL\wedge d^c L+ \varepsilon_j \chi'\circ L_j \,dd^c L.
 $$ 
Thus
$$ 
(dd^c \p_j)^k= k\varepsilon_j^k\, \chi''\circ L_j(z)(\chi'\circ L_j)^{k-1} dL\wedge d^c L\wedge (dd^c L)^{k-1}
+ \varepsilon_j^k \,(\chi'\circ L)^k (dd^c L)^k.
$$
Since $\chi''\circ L_j=0$ outside  $\mathcal{C}_j:=\{\frac{e^{-C_j}}{2}< |z|< 2e^{-C_j}\}$, we obtain
$$
(dd^c \p_j)^k\wedge \omega^{n-k}=F_{j,k}(z) dV(z), \; k =1,2,...,n,
$$
where $\omega =dd^c \log \sqrt{1+|z|^2}$ denotes the Fubini-Study form on $\mathbb{C}^n$ and 
 $$F_{j,k} =  \varepsilon_j^k  \chi''\circ L_j (\chi'\circ L_j)^{k-1} \frac{g_{k}(z)}{|z|^{2k}}
+ \varepsilon_j^k( \chi'\circ L_j)^{k} \frac{h_k(z)}{|z|^{2k}}
$$
with $g_k,h_k>0$ bounded functions. We then see that for $k=0,\cdots,n-1$ we can ensure that 
$\|F_{j,k}\|_{L^q(\mathbb{C}^n)}\leq A$ for some $q>1$ and $A>0$. 

We note that $F_{j,n}= \varepsilon_j^n  \chi''\circ L_j (\chi'\circ L_j)^{n-1} \frac{g_{n}(z)}{|z|^{2n}}$ 
 is supported on $\mathcal{C}_j$ with
$$
 F_{j,n}(z)\leq  C\e_j^n e^{2nC_j} f_n(e^{C_j}z),
 \; \; \text{ where } \; \; 
f_n(z):=|z|^{-2n} \chi''\circ L (\chi'\circ L)^{n-1}.
$$
We take $C_j=\e_j^{-n}$ so that the entropy of $F_{j,n}$ is uniformly bounded. 
Indeed 
 we see that $\int_{\mathbb{C}^n} F_{j,n}(z)\log F_{j,n}(z)dV(z)$ is comparable to 
\[
 \int_{\mathcal{C}} (2n\varepsilon_j^n C_j f_n(w)+ \varepsilon_j^n f_n(w)( \log f_n(w) + n\log \varepsilon_j)) dV(w),
\]
where $\mathcal{C}:=\{1/2<|w|<2\}$
(use the change of variables $w=e^{C_j}z$).
Now
$$
\int_{\mathcal{C} } \varepsilon_j^n f_n(w)( \log f_n(w) + n\log \varepsilon_j)) dV(w) \rightarrow 0
$$ 
 as $j\rightarrow +\infty$, while 
 $$ 
 \int_{\mathcal{C}} 2n f_n(w)dV(w) \sim \int_{\mathcal{C}} \frac{1}{|w|^{2n}} dV(w)\sim \int_{1/2}^2 \frac{1}{\rho} d\rho
 $$
  is uniformly bounded.
The same type of computations 
yields
\begin{eqnarray*}
\int_{\C^n} |\p_j|^p (dd^c \p_j)^n &\sim & \int_{\mathcal{C}_j}\e_j^p |\chi\circ L_j(z)|^p F_{j,n}(z) dV(z) +  \int_{\mathcal{C}_j}\e_j^p C_j^p F_{j,n}(z) dV(z) \\
&\sim & \e_j^{p+n}\int_{\mathcal{C}}f_n(w) dV(w)+ \e_j^{n+p}C_j^p \int_{\mathcal{C}}f_n(w) dV(w) \\
&\sim &\e_j^{n+p}C_j^p \sim \e_j^{n+p-np}  \sim 1, \quad \text{ iff } \; \; 
p=\frac{n}{n-1}.
\end{eqnarray*}
We finally consider the induced $\omega$-psh functions 
$$
\f_j=\p_j(z)- \varepsilon_j\log \sqrt{1+|z|^2}
$$
on $(\PP^n,\omega)$,
and 
conclude that
\begin{itemize}
\item $\f_j \in \Ent_B$ and $C^{-1} \leq E_{\frac{n}{n-1}}(\f_j) \leq C$;
\item $d_p(\f_j,0) \rightarrow 0$  for all $p<\frac{n}{n-1}$ but $d_{\frac{n}{n-1}}(\varphi_j,0) \not \rightarrow 0$. 
\end{itemize}
\end{exa}

 \section{The local setting}  \label{sec:local}
 We fix $\Omega \subset \C^n$  a bounded hyperconvex domain, i.e. there exists a continuous  psh function $\rho: \Omega \rightarrow [-1,0)$ such that $\{\rho <-c\}\Subset \Omega$ for all $c>0$.

\subsection{Cegrell's classes}


Let ${\mathcal T}(\Omega)$ denote the 
set of bounded non-positive psh
functions $u$ defined on $\Omega$ such that $\lim _{z\to \zeta} u (z) = 0,$ 
for every $\zeta \in \partial \Omega,$ and $\int_\Omega(dd^c u )^n <+\infty$.
Cegrell \cite{Ceg98, Ceg04} has 
studied the
complex Monge-Amp\`ere operator $(dd^c \cdot)^n$ and 
 introduced different
classes of 
plurisubharmonic functions on which the latter is well defined: 
\begin{itemize}
\item  ${\rm DMA}(\Omega)$ is the set of psh functions
 $u$ such that for all $z_0 \in \Omega$, there exists a neighborhood $V_{z_0}$ of
$z_0$ and $u_j \in {\mathcal T}(\Omega)$ a decreasing sequence which
converges to  $u$ in $V_{z_0}$ and satisfies
$\sup_j \int_{\Omega} (dd^c u_j)^n <+\infty$.

\item the class ${\mathcal F}(\Omega)$ is the ``global version'' of ${\rm DMA}(\Omega)$:
a function $u$ belongs to ${\mathcal F}(\Omega)$ iff  there exists $u_j \in {\mathcal T}(\Omega)$
a sequence decreasing towards $u$ {\it in all of } $\Omega$, which satisfies
$\sup_j \int_{\Omega} (dd^c u_j)^n<+\infty$;

\item the class ${\mathcal E}^p(\Omega)$ (respectively $\mathcal{F}^p(\Omega)$)
is the set of psh functions $u$ for which there exists a sequence of
functions $u_j \in {\mathcal T}(\Omega)$ decreasing towards $u$ in all of $\Omega$, and
so that $\sup_j \int_{\Omega} (-u_j)^p (dd^c u_j)^n<+\infty$ (respectively $\sup_j \int_{\Omega} [1+(-u_j)^p] (dd^c u_j)^n<+\infty$).
\end{itemize}

Given $u \in \mathcal{E}^p(\Omega)$ we define the weighted energy of $u$ by 
\[
E_p(u):= \int_{\Omega} (-u)^p (dd^c u)^n<+\infty.
\]  
The operator $(dd^c \cdot )^n$ is well defined on these sets, and continuous under decreasing limits. If $u\in \mathcal{E}^p(\Omega)$ for some $p>0$ then $(dd^c u)^n$ vanishes on all pluripolar sets \cite[Theorem 2.1]{BGZ09}. If $u\in \mathcal{E}^p(\Omega)$ and $\int_{\Omega} (dd^c u)^n <+\infty$ then $u\in \mathcal{F}^p(\Omega)$. Also, note that 
$$
{\mathcal T}(\Omega) \subset {\mathcal F}^p(\Omega) \subset{\mathcal F}(\Omega)\subset {\rm DMA}(\Omega)
\; \; \text{ and } \; \; 
{\mathcal T}(\Omega) \subset {\mathcal E}^p(\Omega)\subset {\rm DMA}(\Omega). 
 $$
It has been established in \cite{BGZ09}  that
\begin{equation*}
{\mathcal E}^p(\Omega)=
\left\{ \f \in \PSH^-(\Omega) \, | \,
\int_0^{+\infty} t^{n+p-1} \text{Cap}(\varphi<-t) dt <+\infty \right\}.
\end{equation*}
Here ${\rm Cap}(\cdot) := \text{Cap}(\cdot,\Omega)$ denotes the Monge-Amp\`ere capacity \cite{BT82}:
\[
{\rm Cap}(E,\Omega) := \sup \left \{ \int_E (dd^c u)^n \; | \; u \in \PSH(\Omega), \ -1\leq u \leq 0 \right \}. 
\]
Given a Borel function $h$ defined in $\Omega$ we let $P(h)$ denote the psh envelope: 
\[
P(h) := \left ( \sup\{ u \in \PSH(\Omega) \; | \; u\leq h \; \text{in}\; \Omega \} \right )^*. 
\]

\begin{lemma}
	\label{lem: local envelope}
	If $h$ is bounded and upper semicontinuous  then 
	$P(h)$ is a bounded plurisubharmonic function whose Monge-Amp\`ere measure
	$(dd^c P(h))^n$ is supported on the contact set $\{z\in \Omega \; | \; P(h)(z) =h(z)\}$. 
\end{lemma}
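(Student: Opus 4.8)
The plan is to establish the three assertions---boundedness, plurisubharmonicity, and the support statement---in turn, the last being the substantial one. Boundedness and plurisubharmonicity are immediate: writing $m\leq h\leq M$, the constant $m$ is a competitor in the definition of $P(h)$, so the defining family is nonempty, and every competitor is $\leq M$; hence the upper envelope of a family of psh functions that is locally uniformly bounded above is, after upper semicontinuous regularization, psh, and $m\leq P(h)\leq M$. The role of upper semicontinuity of $h$ is to pin down the contact set: if $u$ denotes the unregularized supremum then $u\leq h$, and for every $z_0$ one has $P(h)(z_0)=\limsup_{z\to z_0}u(z)\leq\limsup_{z\to z_0}h(z)\leq h(z_0)$. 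Thus $P(h)\leq h$ everywhere, the contact set is exactly $\{P(h)=h\}$, and its complement is $\{P(h)<h\}$.

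The heart of the proof is to show that $(dd^c P(h))^n$ puts no mass on $\{P(h)<h\}$, and my main tool is a local maximality (balayage) argument. Given a ball $B\Subset\Omega$, let $P_B$ be the balayage of $P(h)$ on $B$: it is psh, equals $P(h)$ on $\Omega\setminus B$, satisfies $P_B\geq P(h)$ and $(dd^c P_B)^n=0$ in $B$, and obeys the maximum principle $\sup_{\overline{B}}P_B=\sup_{\partial B}P(h)$. If one can guarantee $P_B\leq h$, then $P_B$ is admissible in the envelope, forcing $P_B\leq P(h)$, hence $P_B=P(h)$ and $(dd^c P(h))^n=0$ on $B$. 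When the obstacle is continuous this is clean: at a point $z_0$ with $P(h)(z_0)<h(z_0)$, upper semicontinuity of $P(h)$ gives $\sup_{\overline{B}}P(h)\to P(h)(z_0)$ and continuity of $h$ gives $\inf_{\overline{B}}h\to h(z_0)$ as the radius shrinks, so for a small ball $B$ about $z_0$ we have $P_B\leq\sup_{\partial B}P(h)\leq\sup_{\overline{B}}P(h)<\inf_{\overline{B}}h\leq h$ on $B$. Hence $(dd^c P(h))^n=0$ on an entire neighborhood of each point of the (then open) non-contact set.

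To reach the merely upper semicontinuous $h$ of the statement, I would approximate it from above by continuous functions $h_j\searrow h$. Then $P(h_j)\searrow P(h)$ (the decreasing limit is psh, bounded, and $\leq\lim h_j=h$, hence $\leq P(h)$, the reverse inequality being clear), and by Bedford--Taylor the measures $(dd^c P(h_j))^n$ converge weakly to $(dd^c P(h))^n$. Each approximant has its Monge--Amp\`ere mass on $\{P(h_j)=h_j\}$ by the previous step, i.e. $\int_\Omega(h_j-P(h_j))(dd^c P(h_j))^n=0$, and the goal is to pass to the limit and obtain $\int_\Omega(h-P(h))(dd^c P(h))^n=0$; since the integrand is nonnegative, this forces $(dd^c P(h))^n$ to live on $\{P(h)=h\}$.

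The main obstacle is exactly this limiting passage. For upper semicontinuous $h$ the set $\{P(h)<h\}$ need not be open and the contact sets $\{P(h_j)=h_j\}$ genuinely move with $j$, so the balayage step cannot be localized around a point of $\{P(h)<h\}$, and the quantity $h_j-P(h_j)$ carries no useful semicontinuity when integrated against the varying measures $(dd^c P(h_j))^n$. I expect to handle it by combining the weak continuity of the Monge--Amp\`ere operator along decreasing sequences with the convergence $P(h_j)\to P(h)$ in the Monge--Amp\`ere capacity of \cite{BT82} and the fact that the bounded potential $P(h)$ has Monge--Amp\`ere measure charging no set of vanishing capacity; this should let me split off and kill the term $\int_\Omega(P(h_j)-P(h))(dd^c P(h_j))^n$ and control the remaining contribution on the relevant sublevel sets, which are precisely the sets amenable to the capacity estimates developed later in this section.
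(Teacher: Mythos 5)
Your treatment of boundedness, of the inequality $P(h)\leq h$ via upper semicontinuity, and of the continuous case by balayage is correct, and up to that point it coincides with the paper's proof (which compresses the continuous case into the words ``standard balayage argument''). The approximation scheme $h_j\searrow h$ with $h_j$ continuous, the identity $P(h_j)\searrow P(h)$, and the reduction to showing $\int_\Omega (h-P(h))(dd^cP(h))^n=0$ are also exactly the paper's. But your last paragraph is a plan, not a proof, and the plan does not close the gap you yourself flag. Decomposing $h_j-P(h_j)=(h_j-h)+(h-P(h))+(P(h)-P(h_j))$, discarding the nonnegative term $h_j-h$, and killing $\int_\Omega(P(h_j)-P(h))(dd^cP(h_j))^n$ (which is indeed possible, since $P(h_j)-P(h)$ is uniformly bounded and tends to $0$ in capacity), you are left needing
\[
\int_\Omega (h-P(h))\,(dd^cP(h))^n\;\leq\;\liminf_{j\to+\infty}\int_\Omega (h-P(h))\,(dd^cP(h_j))^n .
\]
Weak convergence of $(dd^cP(h_j))^n$ cannot deliver this: $h-P(h)$ is the sum of an upper semicontinuous function and the lower semicontinuous function $-P(h)$, so it has no one-sided semicontinuity to exploit, and the fact that $(dd^cP(h))^n$ charges no sets of vanishing capacity is not a substitute. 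The capacity estimates ``later in this section'' (Lemma \ref{lem: capacity sublevel Ep}) concern sublevel sets $\{u\leq -s\}$ of psh functions and are not the relevant tool either.

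The missing ingredient is the convergence theorem for Monge--Amp\`ere measures against quasi-continuous integrands converging in capacity: the paper asserts that $h_j-P(h_j)\to h-P(h)$ in capacity and invokes \cite[Theorem 4.26]{GZbook} to conclude that $(h_j-P(h_j))(dd^cP(h_j))^n\to (h-P(h))(dd^cP(h))^n$ weakly; since each measure on the left is identically zero (nonnegative density integrating to zero), so is the limit, which is the assertion. Be aware that this step is genuinely delicate, and your instinct that it is ``the main obstacle'' is right: for an arbitrary bounded upper semicontinuous $h$ the convergence $h_j\to h$ in capacity is not automatic. For instance, if $h={\bf 1}_E-1$ with $E$ compact, Lebesgue-null and non-pluripolar, the sets $\{h_j-h>\delta\}$ contain open collars $\{0<d(\cdot,E)<\varepsilon_j\}$ whose capacity stays bounded below, because any psh competitor which is $\leq -1$ on such a collar is forced, by the sub-mean value inequality, to be $\leq -1$ on $E$ as well. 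The in-capacity convergence does hold when $h$ is in addition quasi-continuous (apply Dini's theorem outside an open set of small capacity), which covers every application of the lemma in the paper, where $h=-(-u)^q$ for a bounded psh function $u$.
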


\begin{proof}
By assumption there exists $C>0$ such that $h\geq -C$, hence $P(h)\geq -C$.
 In particular $P(h) \in \PSH(\Omega) \cap L^{\infty}$. The fact that  $(dd^c P(h))^n$ is supported on the contact set follows from a standard balayage argument if $h$ is continuous. For the general case we let $h_j$ be a sequence of continuous functions on $\bar{\Omega}$ such that $h_j \searrow h$ in $\Omega$. Then $P(h_j)\searrow P(h)$ and 
	\[
	\int_{\Omega} (h_j-P(h_j)) (dd^c P(h_j))^n =0. 
	\]
	Also, we note that $(h_j-P(h_j))$ converges in capacity to  $(h-P(h))$.
	It follows from \cite[Theorem 4.26]{GZbook} that $ (h_j-P(h_j)) (dd^c P(h_j))^n$ 
	weakly converges to $ (h-P(h)) (dd^c P(h))^n$. Letting $j\to +\infty$ we thus obtain
    \[
    \int_{\Omega} (h-P(h)) (dd^c P(h))^n \leq \liminf_{j\to +\infty} \int_{\Omega} (h_j-P(h_j)) (dd^c P(h_j))^n =0
    \]
    which yields the desired result.
\end{proof}

\smallskip

We shall need the following maximum principle  in the sequel:
\begin{lemma}
	\label{lem: MA contact}
	Assume $u \leq v$  are bounded psh functions on $\Omega$. Then 
	\[
	{\bf 1}_{\{u=v\}} (dd^c u)^n \leq {\bf 1}_{\{u=v\}} (dd^c v)^n. 
	\]
\end{lemma}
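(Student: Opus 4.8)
The plan is to reduce the desired inequality of measures to a comparison of masses on compact subsets of the contact set $E:=\{u=v\}$, and to bridge $(dd^c u)^n$ and $(dd^c v)^n$ by the auxiliary bounded psh functions $\phi_\varepsilon:=\max(u,v-\varepsilon)$, $\varepsilon>0$. Since $u\le v$, these satisfy $v-\varepsilon\le \phi_\varepsilon\le v$, they increase to $v$ as $\varepsilon\downarrow 0$, and crucially $\phi_\varepsilon=u$ on the set $\{u>v-\varepsilon\}$, which contains $E$. The point of $\phi_\varepsilon$ is that it coincides with $u$ near the contact set (so its Monge--Amp\`ere mass there is that of $u$) while globally converging to $v$ (so its Monge--Amp\`ere measure converges to that of $v$).

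The first step is locality. The set $\{u>v-\varepsilon\}$ is open in the plurifine topology, $u$ and $v-\varepsilon$ being plurifine continuous, so by the locality of the complex Monge--Amp\`ere operator \cite{BT82,GZbook} the identity $\phi_\varepsilon=u$ there gives
\[
\mathbf{1}_{\{u>v-\varepsilon\}}(dd^c \phi_\varepsilon)^n=\mathbf{1}_{\{u>v-\varepsilon\}}(dd^c u)^n .
\]
Restricting to $E\subseteq\{u>v-\varepsilon\}$ yields the key identity $\mathbf{1}_{E}(dd^c\phi_\varepsilon)^n=\mathbf{1}_{E}(dd^c u)^n$, valid for every $\varepsilon>0$.

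Next I would let $\varepsilon\downarrow 0$. As $\phi_\varepsilon$ increases to the bounded $\omega$-psh function $v$, Bedford--Taylor's continuity theorem for increasing sequences \cite{BT82,GZbook} gives $(dd^c\phi_\varepsilon)^n\to (dd^c v)^n$ weakly. Fix a compact $K\subseteq E$ and an open $A$ with $K\subseteq A\Subset\Omega$. Using the key identity on $K$, then $K\subseteq \bar A$, we get for every $\varepsilon>0$
\[
(dd^c u)^n(K)=(dd^c\phi_\varepsilon)^n(K)\le (dd^c\phi_\varepsilon)^n(\bar A).
\]
The left-hand side is independent of $\varepsilon$, so taking $\limsup_{\varepsilon\to 0}$ and applying the portmanteau inequality for the compact set $\bar A$ gives $(dd^c u)^n(K)\le (dd^c v)^n(\bar A)$. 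Shrinking $A\downarrow K$ and using outer regularity of the (locally finite) measure $(dd^c v)^n$ yields $(dd^c u)^n(K)\le (dd^c v)^n(K)$. Since this holds for all compact $K\subseteq E$, inner regularity of Radon measures gives exactly $\mathbf{1}_{\{u=v\}}(dd^c u)^n\le \mathbf{1}_{\{u=v\}}(dd^c v)^n$.

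The main obstacle is the locality step: the level set $\{u>v-\varepsilon\}$ is in general not open in the Euclidean topology when $u,v$ are merely upper semicontinuous, so one must invoke the locality of $(dd^c\cdot)^n$ in the plurifine topology rather than the elementary ``agree on an open set'' version. Everything else -- the monotone convergence of Monge--Amp\`ere measures, the portmanteau bound on compacts, and the passage from compact subsets of $E$ to the measure inequality -- is routine.
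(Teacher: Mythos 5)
Your proof is correct, but it takes a genuinely different (more self-contained) route than the paper's. The paper's proof is two lines: by Demailly's maximality inequality \cite[Corollary 3.28]{GZbook},
\[
(dd^c\max(u,v))^n \;\geq\; \mathbf{1}_{\{u\geq v\}}(dd^c u)^n + \mathbf{1}_{\{u<v\}}(dd^c v)^n,
\]
and since $u\leq v$ forces $\max(u,v)=v$ and $\{u\geq v\}=\{u=v\}$, multiplying by $\mathbf{1}_{\{u=v\}}$ gives the lemma immediately. You instead reprove the needed special case of that inequality from first principles: the perturbations $\phi_\varepsilon=\max(u,v-\varepsilon)$, locality of $(dd^c\cdot)^n$ in the plurifine topology, Bedford--Taylor convergence along increasing sequences, and a portmanteau-plus-regularity argument. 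The two arguments share the same root---plurifine locality is precisely the engine behind the cited Corollary 3.28, whose proof uses the very same $\max$-perturbation trick---so the paper's version buys brevity by outsourcing the measure theory, while yours buys self-containedness and makes the mechanism explicit. The genuinely delicate point, which you handle correctly, is the passage from the weak convergence $(dd^c\phi_\varepsilon)^n\to(dd^c v)^n$ to an inequality of \emph{restricted} measures: restrictions do not pass to weak limits, and your detour through compact $K\subseteq\{u=v\}$, the $\limsup$ bound on compacts, and inner/outer regularity is the right fix (though the portmanteau bound applies directly to the compact $K$, so the auxiliary open set $A$ and the shrinking step could be skipped). Two minor quibbles: the plurifine locality you invoke is from Bedford--Taylor's 1987 fine-topology paper rather than \cite{BT82}, though it is indeed contained in \cite{GZbook}; and your claim that $\{u>v-\varepsilon\}$ is plurifine open uses that $u-v$ is well defined and plurifine continuous, which is legitimate here precisely because $u$ and $v$ are bounded.
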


\begin{proof} 
	It follows from \cite[Corollary 3.28]{GZbook} that 
	\[
	(dd^c \max(u,v))^n \geq {\bf 1}_{\{u\geq v\}} (dd^c u)^n +  {\bf 1}_{\{u < v\}} (dd^c v)^n,
	\]
Multiplying by ${\bf 1}_{\{u=v\}}$ yields the desired inequality.
\end{proof}

\begin{lemma}
	\label{lem: capacity sublevel Ep}
	Fix  $u \in \mathcal{T}(\Omega)$, $p> 0$ and set $q=\frac{n+p}{n+1}$. Then for all $s>0$,
	\[
	s^{n+p}\capa(u\leq -s) \leq   q^n  \int_{\Omega} (-u)^p (dd^c u)^n.
	\]
\end{lemma}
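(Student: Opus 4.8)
The plan is to bound $\capa(u\le -s)$ by first producing the correct exponent $n+p$ through an envelope, and then sharpening the constant by a Hölder-type argument. By the very definition of the Monge--Amp\`ere capacity and Demailly's regularization, I may assume $u$ is smooth and reduce to estimating $\int_{\{u\le -s\}}(dd^cv)^n$ uniformly over competitors $v\in\PSH(\Omega)$ with $-1\le v\le 0$.

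First I would set $q':=\tfrac{n+p}{n}$ and consider the envelope $\Phi:=P\big(-s^{1-q'}(-u)^{q'}\big)$, a bounded psh function by Lemma \ref{lem: local envelope}. Since $x\mapsto -(-x)^{q'}$ is concave, a direct computation gives $dd^c\big(-s^{1-q'}(-u)^{q'}\big)\le q's^{1-q'}(-u)^{q'-1}\,dd^cu$, so on the contact set Theorem \ref{thm:BD} yields
\[
(dd^c\Phi)^n\le (q')^n s^{n(1-q')}(-u)^{\,p}(dd^cu)^n,\qquad\text{hence}\qquad \int_\Omega(dd^c\Phi)^n\le (q')^n s^{n(1-q')}E_p(u),
\]
using $n(q'-1)=p$. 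Because $-u\ge s$ forces $\Phi\le -s$, we have $\{u\le -s\}\subseteq\{\Phi\le -s\}$, and a comparison argument (in the spirit of Lemma \ref{lem: MA contact}) gives the crude capacity bound $s^n\capa(\Phi\le -s)\le\int_\Omega(dd^c\Phi)^n$. Combining these, $s^{n+p}\capa(u\le -s)\le (q')^n E_p(u)=\big(\tfrac{n+p}{n}\big)^nE_p(u)$: this already yields the right power of $s$, but with the non-optimal constant $\big(\tfrac{n+p}{n}\big)^n$ rather than $q^n=\big(\tfrac{n+p}{n+1}\big)^n$.

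To reach the sharp constant one must retain the favourable cross-term discarded above: on the contact set one has
\[
(dd^c\Phi)^n= (q')^ns^{n(1-q')}\big[(-u)^p(dd^cu)^n-p\,(-u)^{p-1}\,du\wedge d^cu\wedge(dd^cu)^{n-1}\big],
\]
and the subtracted positive term is exactly what should upgrade the exponent $n$ to $n+1$. Integrating the cross-term by parts (the boundary terms vanishing since $u\in\mathcal T(\Omega)$ tends to $0$ on $\partial\Omega$) turns the estimate into a Hölder inequality whose extremizer is an explicit affine/power profile, producing $q^n$ with $q=\tfrac{n+p}{n+1}$; in the radial model $u=\varphi\circ\log|\cdot|$ the whole argument collapses to Cauchy--Schwarz, $\big(\int_\tau^0\varphi'\big)^2\le(-\tau)\int_\tau^0(\varphi')^2$, already sharp when $n=1$. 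The main obstacle is precisely this last step: one cannot simply integrate the cross-term over all of $\Omega$ (there it cancels against the main term), so the gain must be extracted through the geometry of the contact set, equivalently by carrying out the Hölder pairing with the optimal conjugate exponents; both the naive envelope and a one-shot comparison lose the factor $\big(\tfrac{n}{n+1}\big)^n$, and recovering it is the delicate point.
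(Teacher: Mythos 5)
There is a genuine gap: the only part of your argument that is actually complete proves a strictly weaker inequality. Your envelope $\Phi=P\bigl(-s^{1-q'}(-u)^{q'}\bigr)$ with $q'=\tfrac{n+p}{n}$, combined with the mass bound $\int_\Omega(dd^c\Phi)^n\le (q')^n s^{n(1-q')}E_p(u)$ and the mass-based capacity estimate $s^n\capa(\Phi\le -s)\le\int_\Omega(dd^c\Phi)^n$, yields the constant $\left(\tfrac{n+p}{n}\right)^n$, not the stated $q^n=\left(\tfrac{n+p}{n+1}\right)^n$. Your proposed repair — keeping the cross-term $p(-u)^{p-1}du\wedge d^cu\wedge(dd^cu)^{n-1}$ on the contact set and ``turning the estimate into a H\"older inequality'' — is a heuristic, not a proof: as you yourself observe, this term integrates to exactly the main term over all of $\Omega$, so all the gain must come from the geometry of the contact set, over which you have no control; no argument is given, and you explicitly flag this step as the obstacle. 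The constant is not cosmetic: Lemma \ref{lem: capacity sublevel Ep} feeds into Theorem \ref{thm: complex MTA inequality} through the condition $\gamma<\beta/q$ with $\beta<2n$, producing the threshold $\gamma<\tfrac{2n(n+1)}{n+p}$; with your constant one only gets $\gamma<\tfrac{2n^2}{n+p}$, and the Adams-type corollary ($\gamma<n+1$ when $p=n$) degrades to $\gamma<n$.

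The missing idea is to control the \emph{first energy} of the envelope rather than its total Monge--Amp\`ere mass, because for $v\in\mathcal{E}^1(\Omega)$ the capacity decays one power faster: $\capa(v\le -t)\le t^{-(n+1)}E_1(v)$ by \cite[Lemma 2.2]{ACKPZ09}. This extra power of $t$ is precisely where the $n+1$ in $q$ comes from, and it is invisible to any bound of the form $s^{-n}\int_\Omega(dd^c\Phi)^n$. Concretely, the paper sets $v:=P(-(-u)^q)$ with $q=\tfrac{n+p}{n+1}$ (no $s$-dependence), puts $w:=-(-v)^{1/q}$, which is plurisubharmonic, and notes that $w\le u$ with equality on the contact set $D$. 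Applying the maximum principle of Lemma \ref{lem: MA contact} to the pair $(w,u)$ — two genuine psh functions, which also avoids your appeal to Theorem \ref{thm:BD}, a statement proved in the compact setting for smooth obstacles, whereas Lemma \ref{lem: local envelope} only gives support on the contact set — one gets ${\bf 1}_D(dd^cw)^n\le{\bf 1}_D(dd^cu)^n$; since $dd^cw\ge q^{-1}(-v)^{1/q-1}dd^cv$ and $(dd^cv)^n$ is supported on $D$, this gives $(-v)(dd^cv)^n\le q^n(-u)^p(dd^cu)^n$, i.e.\ $E_1(v)\le q^nE_p(u)$. Then the inclusion $\{u\le -s\}\subseteq\{v\le -s^q\}$ and the $\mathcal{E}^1$-capacity estimate yield
\[
\capa(u\le -s)\le \capa(v\le -s^q)\le s^{-q(n+1)}E_1(v)\le s^{-(n+p)}q^nE_p(u),
\]
which is the lemma with the correct constant.
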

  
  \begin{proof}
  	Consider $v := P(-(-u)^q)$. Since $u$ is bounded and $q>1$, we have $Cu \leq -(-u)^q$ for some constant $C>0$. This yields $v\geq Cu$ and since $Cu \in \mathcal{T}(\Omega)$ we also have that  $v\in \mathcal{T}(\Omega)\subset \mathcal{E}^1(\Omega)$. We then set $w=-(-v)^{1/q}$ and 
  	$
  	D:= \{x\in \Omega \; |\;  v=-(-u)^q\}. 
  	$
  	Since $w\leq u$ with equality on $D$ it follows from Lemma \ref{lem: MA contact} that  
  	\[
  	{\bf 1}_D (dd^c w)^n \leq {\bf 1}_D (dd^c u)^n. 
  	\]
  	A direct computation shows that $dd^c w \geq q^{-1} (-v)^{1/q-1} dd^c v$, hence 
  	\[
  	{\bf 1}_D q^{-n} (-v)^{\frac{n(1-q)}{q}}  (dd^c v)^n \leq {\bf 1}_D(dd^c u)^n. 
  	\]
  	Since $(dd^c v)^n$ is supported on $D$ (Lemma \ref{lem: local envelope}) we infer that 
  	\[
  	(-v)(dd^c v)^n \leq {\bf 1}_D q^n (-v)^{1+ \frac{n(q-1)}{q}}  (dd^c u)^n =  {\bf 1}_D q^n (-u)^p (dd^c u)^n. 
  	 \]
  	 Integrating on $\Omega$ gives $E_1(v) \leq q^n E_p(u)$.
  	 
  	 We now use the simple fact that $\{u\leq -s\}\subseteq \{v\leq -s^q\} $ together with \cite[Lemma 2.2]{ACKPZ09}  applied to the function $v\in \mathcal{E}^1(\Omega)$ to obtain
  	 \[
  	 \capa(u\leq -s) \leq \capa (v\leq -s^q) \leq s^{-q(n+1)} E_1(v) \leq s^{-n-p}  q^nE_p(u),
  	 \]
  	 finishing the proof. 
  \end{proof}


The following is a local analogue of Proposition \ref{lem: criterion in Ep}:

\begin{prop}
	If $p>0$ and $u\in \mathcal{E}^p$ then $P(-(-u)^{1+p/n}) \in \mathcal{F}(\Omega)$.
\end{prop}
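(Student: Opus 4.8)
The plan is to mirror the global computation of Proposition \ref{lem: criterion in Ep}, replacing the appeals to Theorem \ref{thm:BD} by the local envelope statement in Lemma \ref{lem: local envelope} and the maximum principle in Lemma \ref{lem: MA contact}. Write $q = 1 + p/n > 1$ and set $\psi := P(-(-u)^q)$. First I would observe that $\psi$ is a bounded-below psh function: since $u \in \mathcal{E}^p(\Omega) \subset \PSH^-(\Omega)$ one cannot immediately bound $u$ below, so the comparison $Cu \leq -(-u)^q$ used in Lemma \ref{lem: capacity sublevel Ep} is no longer available globally. Instead I would truncate, working with $u_k := \max(u,-k) \in \mathcal{T}(\Omega)$ (or with the defining sequence $u_j \in \mathcal{T}(\Omega)$ decreasing to $u$), for which $-(-u_k)^q \geq -k^{q-1}(-u_k) \geq Cu_k$ gives $\psi_k := P(-(-u_k)^q) \in \mathcal{T}(\Omega) \subset \mathcal{F}(\Omega)$ with the desired contact-set structure.

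Next I would establish the uniform mass bound that forces membership in $\mathcal{F}(\Omega)$. On the contact set $\{\psi_k = -(-u_k)^q\}$, the elementary convexity inequality
\[
dd^c\bigl(-(-u_k)^q\bigr) \leq q(-u_k)^{q-1}\, dd^c u_k
\]
(valid where $-(-u_k)^q$ is smooth, extended by approximation) together with Lemma \ref{lem: local envelope}, which concentrates $(dd^c \psi_k)^n$ on the contact set, yields
\[
(dd^c \psi_k)^n = \mathbf{1}_{\{\psi_k = -(-u_k)^q\}}\bigl(dd^c(-(-u_k)^q)\bigr)^n \leq q^n (-u_k)^{n(q-1)}(dd^c u_k)^n = q^n(-u_k)^p (dd^c u_k)^n.
\]
Integrating over $\Omega$ gives $\int_\Omega (dd^c \psi_k)^n \leq q^n \int_\Omega (-u_k)^p (dd^c u_k)^n \leq q^n E_p(u)$, uniformly in $k$. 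Since $\psi_k \searrow \psi$ as $k \to +\infty$ (the targets $-(-u_k)^q$ decrease to $-(-u)^q$, so their envelopes decrease), and the $\psi_k \in \mathcal{T}(\Omega)$ have uniformly bounded Monge-Amp\`ere mass, the very definition of $\mathcal{F}(\Omega)$ gives $\psi \in \mathcal{F}(\Omega)$.

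The main obstacle I anticipate is the boundary/integrability bookkeeping in the truncation, rather than any deep inequality: I must check that $\psi_k$ genuinely lies in $\mathcal{T}(\Omega)$ (in particular that it tends to $0$ at $\partial\Omega$, which follows from $\psi_k \geq Cu_k$ and $u_k \to 0$ on $\partial\Omega$), that $\psi_k$ decreases to $\psi = P(-(-u)^q)$ with no loss from the regularization $^*$, and that the smooth convexity computation above survives the passage to the non-smooth envelopes — this is precisely where Lemma \ref{lem: local envelope} and Lemma \ref{lem: MA contact} do the work, exactly as Theorem \ref{thm:BD} did in the compact case. Once the uniform mass estimate $q^n E_p(u)$ is in hand, membership in $\mathcal{F}(\Omega)$ is immediate from its definition, so the content is entirely in justifying the approximation scheme.
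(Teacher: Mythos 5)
Your skeleton is the paper's own: approximate by a sequence in $\mathcal{T}(\Omega)$, bound the Monge--Amp\`ere mass of the envelopes through the contact set, and conclude from the definition of $\mathcal{F}(\Omega)$ after passing to the decreasing limit. But two steps, as written, do not hold up. The first is the truncation variant: for $u\in\mathcal{E}^p(\Omega)$ the function $u_k:=\max(u,-k)$ need \emph{not} lie in $\mathcal{T}(\Omega)$. Membership in $\mathcal{E}^p(\Omega)$ does not force $u$, nor its truncations, to tend to $0$ pointwise at $\partial\Omega$: from $u\leq u_j$ one only gets the upper bound $\max(u,-k)\leq \max(u_j,-k)$, so your premise ``$u_k\to 0$ on $\partial\Omega$'' is unjustified. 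Likewise the bound $\int_\Omega(-u_k)^p(dd^cu_k)^n\leq E_p(u)$ has no justification (energy is not monotone under truncation with constant $1$). Both problems vanish if you keep only your parenthetical alternative --- the defining sequence $u_j\in\mathcal{T}(\Omega)$, which is what the paper uses: then $\sup_jE_p(u_j)<+\infty$ holds by the very definition of $\mathcal{E}^p(\Omega)$, and $v_j:=P(-(-u_j)^q)\geq M_j^{q-1}u_j\in\mathcal{T}(\Omega)$ (with $M_j=\sup_\Omega|u_j|$) gives $v_j\in\mathcal{T}(\Omega)$.

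The second, more substantive, gap is the displayed equality
\[
(dd^c\psi_k)^n=\mathbf{1}_{\{\psi_k=-(-u_k)^q\}}\bigl(dd^c(-(-u_k)^q)\bigr)^n .
\]
This is the statement of Theorem \ref{thm:BD} (Berman--Demailly), proved on compact manifolds for \emph{smooth} obstacles; Lemma \ref{lem: local envelope} gives strictly less, namely that $(dd^c\psi_k)^n$ is carried by the contact set. Moreover, for non-smooth $u_k$ the right-hand side is not even a well-defined positive measure, since $-(-u_k)^q$ is not psh. The way Lemma \ref{lem: MA contact} actually ``does the work'' is not by approximating the smooth identity, but by the root trick from the proof of Lemma \ref{lem: capacity sublevel Ep}: set $w_j:=-(-v_j)^{1/q}$, which is bounded and psh (composition of $v_j$ with a convex increasing weight) and satisfies $w_j\leq u_j$ with equality on the contact set $D_j$. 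Lemma \ref{lem: MA contact} then gives $\mathbf{1}_{D_j}(dd^cw_j)^n\leq \mathbf{1}_{D_j}(dd^cu_j)^n$, and the inequality $dd^cw_j\geq q^{-1}(-v_j)^{1/q-1}dd^cv_j$ converts this, using $-v_j=(-u_j)^q$ on $D_j$, into
\[
\mathbf{1}_{D_j}(dd^cv_j)^n\leq q^n(-u_j)^p\,\mathbf{1}_{D_j}(dd^cu_j)^n .
\]
Combined with the support property of Lemma \ref{lem: local envelope} this yields the uniform mass bound $\int_\Omega(dd^cv_j)^n\leq q^n\sup_jE_p(u_j)$, after which your monotonicity argument for the envelopes and the definition of $\mathcal{F}(\Omega)$ conclude exactly as in the paper. (The paper itself compresses this step into one citation of Lemma \ref{lem: local envelope}, but the mechanism it relies on is the one above, not the Berman--Demailly equality.)
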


\begin{proof}
	Let $u_j$ be a sequence in $\mathcal{T}(\Omega)$ such that $u_j\searrow u$ and 
	$$
	\sup_j \int_{\Omega} (-u_j)^p (dd^c u_j)^n <+\infty.
	$$
	 Set $v_j := P(-(-u_j)^q)$, $D:= \{v_j= -(-u_j)^q\}$, where $q=1+p/n$. Since 
	 $$
	 dd^c (-(-u_j)^q)=q|u_j|^{q-1} dd^c u_j-q(q-1)|u_j|^{q-2}du_j\wedge d^c u_j \leq q|u_j|^{q-1} dd^c u_j,
	 $$
it follows from Lemma \ref{lem: local envelope} that
	\[
	(dd^c v_j)^n \leq  {\bf 1}_{D} q^n (-u_j)^{n(q-1)} (dd^c u_j)^n\leq q^n (-u_j)^{p} (dd^c u_j)^n.
	\]
Thus
$$
\sup_j \int_{\Omega}(dd^c v_j)^n \leq  q^n \sup_j E_p(u_j)<+\infty.
$$
Now $v_j \in \mathcal{T}(\Omega)$ and $v_j \searrow v: =P(-(-u)^q)$, hence $v$ belongs to $ \mathcal{F}(\Omega)$. 
\end{proof}

We will also need the following energy estimate:

 \begin{lemma}\label{lem: Ep inequality}
 Fix $p\geq 1$. If $u,v \in \mathcal{F}^p(\Omega)$ and $u\leq v$ then $E_p(v)\leq 2^{n+p}E_p(u)$.  
 \end{lemma}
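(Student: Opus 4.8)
The plan is to prove the inequality $E_p(v) \leq 2^{n+p} E_p(u)$ for $u \leq v$ in $\mathcal{F}^p(\Omega)$ by expanding the Monge-Amp\`ere measure $(dd^c v)^n$ of the larger potential and comparing it termwise against $(dd^c u)^n$. First I would reduce, by the very definition of $\mathcal{F}^p(\Omega)$ and continuity of the energy under decreasing limits, to the case where $u,v \in \mathcal{T}(\Omega)$ are bounded, so that all Bedford–Taylor products are classical and integration by parts is legitimate. The key algebraic idea is to write $dd^c v = dd^c u + dd^c(v-u)$ and expand
\[
(dd^c v)^n = \sum_{k=0}^{n} \binom{n}{k} (dd^c u)^{k} \wedge (dd^c(v-u))^{n-k},
\]
then integrate $(-v)^p$ against this expansion.

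The heart of the argument is an integration-by-parts estimate showing that each mixed term contributes at most a controlled multiple of $E_p(u)$. For this I would use that $v - u \geq 0$ together with the fact that $-v \leq -u$ (since $v \geq u$), so that $(-v)^p \leq (-u)^p$ pointwise; this immediately bounds the weight. The more delicate point is handling the positive current $dd^c(v-u)$: I would integrate by parts to move derivatives off the factor $v-u$ and onto the weight $(-v)^p$ or onto $u$, exploiting that $v-u$ vanishes on $\partial\Omega$ (both potentials lie in $\mathcal{F}^p(\Omega)$, hence have zero boundary values in the appropriate weak sense). The boundary terms vanish precisely because $u,v \in \mathcal{T}(\Omega)$ tend to $0$ at $\partial\Omega$, which is where the hyperconvexity of $\Omega$ and the structure of Cegrell's classes are used.

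The main obstacle I anticipate is controlling the gradient terms $d(v-u) \wedge d^c(v-u)$ that appear after integration by parts when $p > 1$, since the weight $(-v)^p$ is no longer linear and its derivative $p(-v)^{p-1}\,dv$ feeds back into the estimate. The standard device is to absorb these terms using a Cauchy–Schwarz inequality for the positive form $(dd^c u + dd^c v)^{n-1}$, reducing everything back to the two pure energies $E_p(u)$ and $E_p(v)$; the constant $2^{n+p}$ strongly suggests that after collecting all $\binom{n}{k}$ mixed terms and the weight comparison $(-v)^p \leq (-u)^p$, one bounds $E_p(v)$ by a geometric-type sum dominated by $2^{n+p} E_p(u)$. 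Throughout I would invoke the comparison principle and the fundamental energy inequalities of Cegrell's theory already available in the literature, so the proof reduces to the expansion, the termwise weight bound, and the vanishing of boundary contributions, with the $p>1$ gradient control being the only genuinely technical step.
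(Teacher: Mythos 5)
Your core step does not survive scrutiny: you decompose $dd^c v = dd^c u + dd^c(v-u)$, call $dd^c(v-u)$ a \emph{positive} current, and expand $(dd^c v)^n = \sum_{k=0}^{n}\binom{n}{k}(dd^c u)^k\wedge(dd^c(v-u))^{n-k}$ with the intent of bounding each term separately. But $v-u\geq 0$ does not make $v-u$ plurisubharmonic, so $dd^c(v-u)$ is merely a difference of positive currents; the mixed terms in your expansion are signed measures, and no termwise comparison is available. This also breaks your accounting of the constant: the binomial coefficients would sum to $2^n$ only if all terms were counted with positive sign, and the weight comparison $(-v)^p\leq(-u)^p$ contributes a factor $1$, not $2^p$, so the heuristic matching with $2^{n+p}$ is illusory. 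Finally, the Cauchy--Schwarz ``absorption'' you invoke for the gradient terms when $p>1$ would reintroduce $E_p(v)$ on the right-hand side, and absorbing it requires a coefficient strictly less than $1$ in front of it, which you never produce. So the proposal has a genuine gap at its central step.

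The paper's proof is entirely different and avoids all of this machinery. It writes the energy by the layer-cake formula $E_p(v)=p\int_0^{+\infty}t^{p-1}\,(dd^c v)^n\left(\{v<-t\}\right)dt$, substitutes $t\mapsto 2t$ (this is where the factor $2^p$ comes from), uses the elementary inclusions $\{v<-2t\}\subseteq\{2u<v-2t\}\subseteq\{u<-t\}$ (valid since $u\leq v\leq 0$), and then applies Cegrell's comparison principle on the open set $\{2u<v-2t\}$ to get $(dd^c v)^n\left(\{2u<v-2t\}\right)\leq 2^n\,(dd^c u)^n\left(\{2u<v-2t\}\right)$, which is where $2^n$ enters. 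No binomial expansion, no integration by parts, no reduction to $\mathcal{T}(\Omega)$. If you want a correct argument closer in spirit to yours, the right replacement for your termwise estimate is the H\"older-type energy inequality of Cegrell--Persson, $\int_{\Omega}(-u)^p(dd^c v)^n\leq D_{n,p}\,E_p(u)^{p/(n+p)}E_p(v)^{n/(n+p)}$: combined with $(-v)^p\leq(-u)^p$ and the a priori finiteness of $E_p(v)$ (here absorption is legitimate because $v\in\mathcal{F}^p(\Omega)$), it yields $E_p(v)\leq D_{n,p}^{(n+p)/p}E_p(u)$ --- a valid proof, but with a worse and non-explicit constant in place of $2^{n+p}$.
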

 
 \begin{proof}
Observe that $\{v < -2t\}\subseteq \{2u < v-2t\} \subseteq \{u<-t \}$.Therefore
 \begin{align*}
 	E_p(v) &= p \int_0^{+\infty}  t^{p-1}(dd^c v)^n (v < -t) dt
 	 =p 2^{p}\int_0^{+\infty}  t^{p-1}(dd^c v)^n (v < -2t) dt\\
 	&\leq p 2^{p}\int_0^{+\infty}  t^{p-1}(dd^c v)^n (2u < v-2t) dt \\
 	&\leq p 2^{p+n}\int_0^{+\infty}  t^{p-1}(dd^c u)^n (2u < v-2t) dt\\
 	&\leq p 2^{n+p}\int_0^{+\infty}  t^{p-1}(dd^c u)^n (u < -t) dt
 	= 2^{n+p} E_p(u),
 \end{align*}
 where in the second inequality we have used the comparison principle \cite[Lemma 4.4]{Ceg98}. 
 \end{proof}

%
%

\subsection{Moser-Trudinger-Adams inequality}
 
\ 
 
Let $H_0^{1,2}$ be the Sobolev space of $L^2$-functions with gradient in $L^2$, 
completion of
 the space ${\mathcal D}(\Omega)$ of smooth functions with compact support in $\Omega$.
 
  The classical Moser-Trudinger inequality asserts that if $\Omega\subset \mathbb{R}^2$ has bounded area
  and $u$ belongs to the unit ball of $H_0^{1,2}$, then
  $$
  \int_{\Omega} \exp(2 \pi u^2) dV_{eucl} \leq C_1 \rm{Area}(\Omega),
  $$
 for some absolute constant $C_1>0$ (see \cite{Tru67,Mos71}). 

 This famous inequality has been generalized in various ways. Adams \cite{Ad88} notably showed that if $\Omega \subset \R^4$
 has finite volume and $u \in {\mathcal D}(\Omega)$ satisfies $\int_{\Omega} (\Delta u)^2 dV\leq 1$, then
 $$
 \int_{\Omega} \exp( 32 \pi^2 u^2) \,dV_{eucl}\leq C_2 \rm{Vol}(\Omega).
 $$
 
 Identifying $\R^2 \simeq \C$ and $\R^4 \simeq \C^2$, we propose here yet another type of generalization of these inequalities,
 replacing the average bound on $\Delta u$ by a plurisubharmonicity condition. Let us emphasize that the psh condition can be seen as a one-sided bound $dd^c u \geq 0$, that is exactly $\Delta u \geq 0$ in complex dimension $1$.
 
 \begin{thm}\label{thm: complex MTA inequality}
 Let $\Omega \subset \C^n$ be a bounded hyperconvex domain. 
 Fix $p> 0$ and  $0<\gamma<\frac{2n(n+1)}{n+p}$.
 There exists  $C_\gamma>0$ such that  
  
 \begin{equation}\label{exp_local}
 \int_{\Omega} \exp \left( \gamma E_p(u)^{-1/n} |u|^{1+\frac{p}{n}} \right) dV \leq C_\gamma
 \end{equation}
 for any non-constant $u \in {\mathcal E}^p(\Omega)$.
 \end{thm}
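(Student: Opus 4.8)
The plan is to combine the sublevel-set capacity estimate of Lemma \ref{lem: capacity sublevel Ep} with a volume--capacity comparison of Alexander--Taylor type, after a scaling reduction. First I would observe that the exponent $E_p(u)^{-1/n}|u|^{1+p/n}$ is invariant under $u\mapsto \lambda u$, $\lambda>0$: since $E_p(\lambda u)=\lambda^{n+p}E_p(u)$, one has $E_p(\lambda u)^{-1/n}|\lambda u|^{1+p/n}=E_p(u)^{-1/n}|u|^{1+p/n}$. As $u$ is non-constant we have $0<E_p(u)<+\infty$, so replacing $u$ by $E_p(u)^{-1/(n+p)}u$ I may assume $E_p(u)=1$ and reduce to proving $\int_\Omega \exp(\gamma(-u)^{1+p/n})\,dV\le C_\gamma$. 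Writing $\Phi(s):=\vol(\{u<-s\})$ and applying the layer-cake formula to the increasing function $s\mapsto e^{\gamma s^{1+p/n}}$ gives
\[
\int_\Omega e^{\gamma(-u)^{1+p/n}}\,dV=\vol(\Omega)+\gamma\Big(1+\tfrac{p}{n}\Big)\int_0^{+\infty} s^{p/n}\,e^{\gamma s^{1+p/n}}\,\Phi(s)\,ds ,
\]
so everything reduces to a sufficiently strong decay estimate for $\Phi$.

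To control $\Phi$ I would use Lemma \ref{lem: capacity sublevel Ep}, extended from $\mathcal{T}(\Omega)$ to $u\in\mathcal{E}^p(\Omega)$ by approximating $u$ from above by a decreasing sequence $u_j\in\mathcal{T}(\Omega)$: since $\{u<-s\}=\bigcup_j\{u_j<-s\}$ is an increasing union, continuity of the capacity along increasing sequences of Borel sets and the convergence $E_p(u_j)\to E_p(u)$ yield, with $q=\frac{n+p}{n+1}$ and $E_p(u)=1$,
\[
\capa(u<-s)\le q^n s^{-(n+p)},\qquad\text{hence}\qquad \capa(u<-s)^{-1/n}\ge \tfrac{n+1}{n+p}\,s^{1+p/n}.
\]

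The key analytic input is a volume--capacity inequality of Alexander--Taylor type: there is a constant $A>0$ depending only on $\Omega$ and $n$ such that $\vol(E)\le A\exp\!\big(-2n\,\capa(E,\Omega)^{-1/n}\big)$ for every Borel set $E\Subset\Omega$, where the exponential rate $2n$ is universal — it is sharp, as the model computation with $E=\{|z|<r\}$ inside a ball shows, and its independence of $\Omega$ follows from $\Omega\Subset \mathbb{B}_R$ together with the monotonicity of the relative capacity (only the prefactor $A$ depends on $\Omega$). Feeding the previous bound into this inequality gives
\[
\Phi(s)\le A\exp\!\Big(-\tfrac{2n(n+1)}{n+p}\,s^{1+p/n}\Big)=A\,e^{-\beta s^{1+p/n}},\qquad \beta:=\tfrac{2n(n+1)}{n+p}.
\]
Inserting this into the integral above bounds the integrand by a constant multiple of $s^{p/n}e^{(\gamma-\beta)s^{1+p/n}}$, which is integrable on $(0,+\infty)$ precisely because the hypothesis $\gamma<\frac{2n(n+1)}{n+p}=\beta$ forces $\gamma-\beta<0$. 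This produces a finite bound $C_\gamma$ depending only on $\Omega,n,p,\gamma$.

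The main obstacle is the volume--capacity comparison with the \emph{sharp} constant $2n$: it is exactly this constant that pins down the admissible range $\gamma<\frac{2n(n+1)}{n+p}$, so both the correct rate $2n$ and the fact that this rate does not depend on $\Omega$ are essential to recover the stated threshold. A secondary technical point is the passage from $\mathcal{T}(\Omega)$ to $\mathcal{E}^p(\Omega)$ in the capacity estimate, which rests on the monotone convergence of the capacities of the sublevel sets and on $E_p(u_j)\to E_p(u)$ along the chosen approximating sequence.
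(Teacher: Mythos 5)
Your overall strategy is exactly the paper's: reduce by scaling (the paper keeps $E_p(u)$ explicit rather than normalizing it to $1$, but this is cosmetic), write the exponential integral by the layer-cake formula, control $\capa(u<-s)$ via Lemma \ref{lem: capacity sublevel Ep}, and convert capacity decay into volume decay. Your way of extending Lemma \ref{lem: capacity sublevel Ep} from $\mathcal{T}(\Omega)$ to $\mathcal{E}^p(\Omega)$ --- the increasing unions $\{u<-s\}=\bigcup_j\{u_j<-s\}$, continuity of $\capa$ along increasing sequences of Borel sets, and $E_p(u_j)\to E_p(u)$ from \cite[Theorem 3.4]{BGZ09} --- is legitimate; the paper instead runs the whole estimate for each approximant $u_j\in\mathcal{T}(\Omega)$ and concludes by Fatou's lemma, but both routes work.

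The gap is your ``key analytic input''. You assert the volume--capacity inequality at the \emph{endpoint} rate $2n$, namely $\vol(E)\le A\exp\bigl(-2n\,\capa(E,\Omega)^{-1/n}\bigr)$, and justify it by (i) the model computation for balls and (ii) monotonicity of the relative capacity, reducing to $\Omega=\mathbb{B}_R$. Neither is a proof: (i) only shows that no rate \emph{larger} than $2n$ can hold (sharpness), not that the rate $2n$ itself is valid; and (ii) reduces the claim to the ball, which is the entire content of the claim and is left unproved. What is actually available --- and what the paper uses --- is \cite[Proposition 6.1]{ACKPZ09}: for every $\beta<2n$ there is $C_\beta>0$ with $\vol(E)\le C_\beta\exp(-\beta\,\capa(E,\Omega)^{-1/n})$, the constants degenerating as $\beta\to 2n$. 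The endpoint version is a genuinely sharp-constant statement: it can in fact be deduced from the deep result \cite[Theorem 1.5]{BB11} by applying that theorem to the relative extremal function $h_E$ (using $E_1(h_E)\le \capa(E,\Omega)$ and $|h_E|=1$ q.e.\ on $E$), but it certainly does not follow from your two observations, and treating it as a standard ``Alexander--Taylor'' fact is where your argument breaks. Fortunately you do not need the endpoint: since the hypothesis $\gamma<\frac{2n(n+1)}{n+p}$ is an open condition, you may fix $\beta<2n$ with $\beta\,\frac{n+1}{n+p}>\gamma$, use the $\beta$-version to get $\Phi(s)\le C_\beta\exp\bigl(-\beta\,\tfrac{n+1}{n+p}\,s^{1+p/n}\bigr)$, and the final integral still converges. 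With that one-line repair your proof coincides with the paper's.
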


Theorem \ref{thm: complex MTA inequality} generalizes an important result of Berman-Berndtsson  \cite[Theorem 1.5]{BB11} which treats the case  $p=1$ and provides the  sharp constant  $\gamma=2n$ 
(beware of  the different normalization for $d^c$  in \cite{BB11}).  
The proof of \cite[Theorem 1.5]{BB11} uses induction on  dimension and  the ``thermodynamical formalism'' introduced in \cite{Ber13}. This approach does not seem to work for other energies $E_p$, $p\neq 1$.  
Similar estimates have been established by many authors using various 
 techniques (see \cite{Ceg19}, \cite{AC19},  \cite{BB14}, \cite[Theorem 1.1]{WWZ20} and the references therein).

\begin{remark}
Note that   $E_p(u)>0$ since $u\leq 0$ is non-constant.
  Indeed if $E_p(u)=0$ then $(dd^c u)^n =0$ in $\Omega$. 
  By uniqueness of solutions to the complex Monge-Amp\`ere equation \cite[Theorem 4.5]{Ceg98},  we would then get $u\equiv 0$.
\end{remark}

\begin{remark}
A scaling argument insures that the exponent $-1/n$ of the energy $E_p$ in \eqref{exp_local} is optimal.
 Indeed 
 $E_p(su)=s^{p+n} E_p(u)$ hence
 $$ E_p(su)^{-1/n} |su|^{1+\frac{p}{n}}= E_p(u)^{-1/n}|u|^{1+\frac{p}{n}}.$$
\end{remark}
\medskip

\begin{proof} We first assume that $u\in \mathcal{T}(\Omega)$. For convenience we set $A:= E_p(u)$, $q=\frac{n+p}{n+1}$ and $r= 1+ \frac{p}{n}$.   It follows from \cite[Proposition 6.1]{ACKPZ09} 
that for all $\beta<2n$,  there exists $C_{\beta}>0$ such that
  $$
 {\rm Vol}(u<-t) \leq C_{\beta}\exp\left( -\frac{\beta}{{\rm Cap}(u<-t)^{1/n}} \right)
 \leq C_{\beta}\exp\left( -\frac{\beta t^r}{ qA^{1/n}} \right),
  $$
  where the last inequality follows  from Lemma \ref{lem: capacity sublevel Ep}. 
  We infer
  \begin{eqnarray*}
  \int_{\Omega} \exp(\gamma' |u|^r) dV & =& 
  \int_0^{+\infty} r \gamma' t^{r-1} e^{\gamma' t^r} {\rm Vol} (u<-t) dt \\
  & \leq &  C_{\beta}' \int_0^{+\infty}   t^{r-1}  \exp\left( \left[\gamma'- \frac{\beta}{qA^{1/n}}\right] t^r \right) dt <+\infty
  \end{eqnarray*}
  as long as $\gamma':=\gamma  A^{-1/n}<\beta q^{-1}A^{-1/n}$, i.e. $\gamma< \frac{2n(n+1)}{n+p}$.

 	This proves the statement for $u\in \mathcal{T}(\Omega)$. 
 	We next use an approximation argument  to treat the general case of $u\in \mathcal{E}^p(\Omega)$. By definition there exists a sequence $u_j \in \mathcal{T}(\Omega)$ such that $u_j \searrow u$ and 
 	\[
 	\sup_j  E_p(u_j) := \sup_j \int_{\Omega} (-u_j)^p (dd^c u_j)^n <+\infty. 
 	\]
 	 By \cite[Theorem 3.4]{BGZ09}, $E_p(u_j) \to E_p(u)$. Using the first step and Fatou's lemma we conclude the proof. 
 \end{proof}

 In the case $p=n$ we obtain the following higher dimensional complex version of Adam's inequality:

\begin{coro}
 Let $\Omega \subset \C^n$ be a bounded hyperconvex domain. Fix
 $0<\gamma<n+1$.
 There is  $C_{\gamma}>0$ such that  
  for all $u \in {\mathcal E}^n(\Omega)$ with $E_n(u) \leq 1$, 
 $$
 \int_{\Omega} e^{\gamma  u^{2}}\, dV \leq C_{\gamma}.
 $$
\end{coro}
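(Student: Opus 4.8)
The plan is to derive this as the special case $p=n$ of Theorem \ref{thm: complex MTA inequality}, after absorbing the normalization $E_n(u)\leq 1$. First I would set $p=n$ in the theorem and simplify the two exponents: the power $1+\frac{p}{n}$ collapses to $1+\frac{n}{n}=2$, so that $|u|^{1+p/n}=u^{2}$, and the admissible range $0<\gamma<\frac{2n(n+1)}{n+p}$ becomes exactly $0<\gamma<\frac{2n(n+1)}{2n}=n+1$. Thus Theorem \ref{thm: complex MTA inequality} already yields, for every non-constant $u\in\mathcal{E}^n(\Omega)$ and every $0<\gamma<n+1$, a constant $C_\gamma>0$ with $\int_\Omega \exp\bigl(\gamma E_n(u)^{-1/n}u^2\bigr)\,dV\leq C_\gamma$.

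Next I would remove the factor $E_n(u)^{-1/n}$ using the hypothesis $E_n(u)\leq 1$. Since $u\leq 0$ is non-constant, the Remark following Theorem \ref{thm: complex MTA inequality} (via Cegrell's uniqueness \cite[Theorem 4.5]{Ceg98}) guarantees $E_n(u)>0$; combined with $E_n(u)\leq 1$ this gives $E_n(u)^{-1/n}\geq 1$. As $u^2\geq 0$, we obtain the pointwise bound $\gamma u^2\leq \gamma E_n(u)^{-1/n}u^2$, hence $e^{\gamma u^2}\leq \exp\bigl(\gamma E_n(u)^{-1/n}u^2\bigr)$ everywhere on $\Omega$. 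Integrating and invoking the displayed estimate from the previous paragraph yields $\int_\Omega e^{\gamma u^2}\,dV\leq C_\gamma$, which is the claim for non-constant $u$.

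Finally I would dispose of the degenerate case. If $u\in\mathcal{E}^n(\Omega)$ with $E_n(u)\leq 1$ fails to be non-constant, then $E_n(u)=0$, so $(dd^c u)^n=0$ and Cegrell's uniqueness forces $u\equiv 0$; then $\int_\Omega e^{\gamma u^2}\,dV=\mathrm{Vol}(\Omega)$, and enlarging $C_\gamma$ to be at least $\mathrm{Vol}(\Omega)$ covers this case as well. I expect no real obstacle here: the entire content is inherited from Theorem \ref{thm: complex MTA inequality}, and the only points requiring a line of care are the arithmetic simplification $1+p/n=2$ together with the range $\gamma<n+1$, and the monotonicity step $E_n(u)^{-1/n}\geq 1$ that converts the scale-invariant inequality into the normalized one.
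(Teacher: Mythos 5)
Your proposal is correct and is exactly the argument the paper intends: the corollary is stated as the immediate specialization $p=n$ of Theorem \ref{thm: complex MTA inequality}, where $1+\frac{p}{n}=2$ and $\frac{2n(n+1)}{n+p}=n+1$, and the normalization $E_n(u)\leq 1$ gives $E_n(u)^{-1/n}\geq 1$ so the theorem's bound dominates $\int_\Omega e^{\gamma u^2}\,dV$. Your treatment of the degenerate constant case via the paper's own remark (Cegrell's uniqueness forcing $u\equiv 0$ when $E_n(u)=0$) is also consistent with the paper.
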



Arguing as in the proof of Corollary \ref{cor: MT inequality}, we obtain the following version of the Moser-Trudinger inequality: 

 \begin{corollary}
 	Fix $p>0$, $\varepsilon >0$ and $A= \frac{p}{n+p}  \left (\frac{1}{2(n+1)} \right)^{\frac{n}{p}}$.   There exists a uniform constant $B>0$ depending on $\varepsilon$ such that for all $u \in \mathcal{E}^p(\Omega)$, 
 	\[
 	\log \int_{\Omega} e^{- u} dV \leq (A+\varepsilon) E_p(u)^{\frac{1}{p}} + B.
 	\]
 \end{corollary}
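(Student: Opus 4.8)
The plan is to imitate the derivation of Corollary \ref{cor: MT inequality}, replacing the global Moser--Trudinger bound by its local counterpart Theorem \ref{thm: complex MTA inequality}. I first dispose of the degenerate case: if $u$ is constant then the statement is trivial, so I assume $u \in \mathcal{E}^p(\Omega)$ is non-constant, whence $E_p(u)>0$ by the Remark following Theorem \ref{thm: complex MTA inequality}; this positivity is what allows me to divide by powers of $E_p(u)$ throughout.

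I would then run the Young-inequality step exactly as before. With $\alpha = 1+\frac{p}{n}$ and $\beta = 1+\frac{n}{p}$ (so $\frac1\alpha+\frac1\beta=1$), applying Young's inequality to $\xi = -u(x)$ and $\eta = \tau^{1+\frac{p}{n}}$, and choosing $\tau = a\, E_p(u)^{\frac{1}{n+p}}$ for a free parameter $a>0$, yields the pointwise bound
\[
-u(x) \leq \frac{p\, a^{1+\frac{n}{p}}}{n+p}\, E_p(u)^{\frac{1}{p}} + \frac{n}{(n+p)\, a^{1+\frac{p}{n}}}\cdot \frac{(-u(x))^{1+\frac{p}{n}}}{E_p(u)^{1/n}}.
\]
Next I fix a constant $\gamma$ with $0<\gamma<\frac{2n(n+1)}{n+p}$ and choose $a$ so that the coefficient of the second term equals $\gamma$, i.e. $a^{1+\frac{p}{n}} = \frac{n}{(n+p)\gamma}$. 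A short computation then gives $a^{1+\frac{n}{p}} = \bigl(\frac{n}{(n+p)\gamma}\bigr)^{n/p}$, so the coefficient in front of $E_p(u)^{1/p}$ becomes $\frac{p}{n+p}\bigl(\frac{n}{(n+p)\gamma}\bigr)^{n/p}$.

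Finally I would exponentiate this pointwise inequality, integrate over $\Omega$, pull the constant-in-$x$ first factor outside, and bound the remaining integral $\int_{\Omega}\exp\bigl(\gamma\, E_p(u)^{-1/n}|u|^{1+\frac{p}{n}}\bigr)\,dV \leq C_\gamma$ by Theorem \ref{thm: complex MTA inequality}. Taking logarithms gives
\[
\log \int_\Omega e^{-u}\,dV \leq \frac{p}{n+p}\left(\frac{n}{(n+p)\gamma}\right)^{n/p} E_p(u)^{\frac{1}{p}} + \log C_\gamma.
\]
To match the advertised constant I let $\gamma \nearrow \frac{2n(n+1)}{n+p}$: then $\frac{n}{(n+p)\gamma}$ decreases to $\frac{1}{2(n+1)}$, so the coefficient decreases to $A = \frac{p}{n+p}\bigl(\frac{1}{2(n+1)}\bigr)^{n/p}$ from above. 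Hence for any $\varepsilon>0$ I choose $\gamma$ close enough to $\frac{2n(n+1)}{n+p}$ that the coefficient is at most $A+\varepsilon$, and set $B := \log C_\gamma$, which depends on $\gamma$ and therefore on $\varepsilon$. I expect no genuine obstacle here, since this is a direct transcription of Corollary \ref{cor: MT inequality}; the only subtleties to monitor are the strict inequality $\gamma < \frac{2n(n+1)}{n+p}$ forced by Theorem \ref{thm: complex MTA inequality} (which is precisely why the sharp constant $A$ itself is not reached and one pays the extra $\varepsilon$) and the positivity $E_p(u)>0$ needed to divide, guaranteed for non-constant $u$.
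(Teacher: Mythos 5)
Your proof is correct and follows essentially the same route as the paper's own: the pointwise Young-inequality bound \eqref{ineq3.2} from Corollary \ref{cor: MT inequality} with $k=1$, the choice of $a$ so that the coefficient of the nonlinear term equals a parameter $\gamma<\frac{2n(n+1)}{n+p}$ admissible in Theorem \ref{thm: complex MTA inequality}, exponentiation and integration, and finally letting $\gamma$ tend to $\frac{2n(n+1)}{n+p}$ so that the leading coefficient drops below $A+\varepsilon$. Your explicit handling of the constant case and of the positivity $E_p(u)>0$ is a minor refinement the paper leaves implicit, not a different argument.
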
 

 When $p=1$ the constant $A$ is sharp as shown in \cite[Theorem 1.5]{BB11}. 
A similar result (with a less precise constant) has been established in \cite[Theorem 4.1]{AC19} 
by a different method. It was also observed in \cite{AC19} that the constant $A$ can not be smaller than $\frac{p}{n+p}  \left (\frac{1}{2(n+p)} \right)^{\frac{n}{p}}$
(beware of the different normalizations of $dd^c$ in \cite{BB11,AC19} and in the present article !).
 
 \begin{proof}
 	The proof is the same as that of Corollary \ref{cor: MT inequality}. In particular \eqref{ineq3.2} with $k=1$ gives 
 	\begin{equation}\label{ineq3.2local}
 	-u(x)  \leq  \frac{p a^{1+\frac{n}{p}} E_p(u)^{\frac{1}{p}} }{n+p} +  \frac{n (-u(x))^{1+\frac{p}{n}}}{(n+p) a^{1+\frac{p}{n}} E_p(u)^{\frac{1}{n}}}.
	\end{equation}
 	Fix  $0<c<c_0:=\frac{2n(n+1)}{n+p}$. Then 
 	\[
 	a:=  \left (\frac{n}{(n+p)c} \right)^{\frac{n}{n+p}} > \left ( \frac{1}{2(n+1)} \right )^{\frac{n}{n+p}},
 	\]
 	and 
 	\[
 	A':= \frac{p a^{1+ \frac{n}{p}}}{n+p} > \frac{p}{n+p}  \left (\frac{1}{2(n+1)} \right)^{\frac{n}{p}}=A, 
 	\]
 	with equalities when $c=c_0$. The inequality \eqref{ineq3.2local} 
 	can then be rewritten as
 	$$ 	-u(x)  \leq A' E_p(u)^{\frac{1}{p}} +  cE_p(u)^{-\frac{1}{n}} (-u(x))^{1+\frac{p}{n}}.$$
Now Theorem \ref{thm: complex MTA inequality} ensures
that
 	\[
 	\log \int_{\Omega} e^{- u} dV \leq A' E_p(u)^{\frac{1}{p}} + B. 
 	\]
 For any $\varepsilon>0$ we can choose $c$ so close to $\frac{2n(n+1)}{n+p}$ that $A' \leq A+ \varepsilon$.
 \end{proof}

\subsection{Finite entropy potentials}

 Let $\mu= fdV \geq 0$ be a probability measure on $\Omega$ with finite entropy, i.e. 
  $\Ent(f) :=\int_{\Omega} f \log f dV<+\infty$. 
  Cegrell \cite[lemma 5.14]{Ceg04}  has shown that there exists a unique psh function $\varphi \in \mathcal{F}(\Omega)$ such that $(dd^c \varphi)^n =\mu$.  
We show here that  $\varphi$ belongs to an appropriate finite energy class:

 \begin{thm}\label{thm: entropy local}
The function  $\varphi$  belongs to ${\mathcal E}^{p}(\Omega)$ for all $0<p\leq \frac{n}{n-1}$. 
Moreover, there exist $c, C>0$  depending on $n$, $p$, $\Omega$ and $\Ent(f)$ such that 
$$
E_p(\varphi) \leq C
\; \; \text{ and } \; \;
 \int_{\Omega} e^{c |\f|^p} \omega^n \leq C.
$$
  \end{thm}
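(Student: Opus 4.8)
The plan is to reduce the whole statement to the critical exponent $p_0:=\frac{n}{n-1}$ and then run the local counterpart of the argument behind Theorem \ref{thm: Entropy implies Ep and integrability}, with the Moser--Trudinger--Adams inequality of Theorem \ref{thm: complex MTA inequality} playing the role of Theorem \ref{thm: Moser-Trudinger inequality}. The decisive coincidence is numerical: one has $1+\frac{p_0}{n}=p_0$ \emph{exactly} when $p_0=\frac{n}{n-1}$, so that the exponent $1+\frac pn$ appearing in Theorem \ref{thm: complex MTA inequality} matches the weight $(-\varphi)^{p_0}$ of the energy $E_{p_0}$. Once the critical case is settled, the range $0<p<p_0$ costs nothing: I would invoke $\mathcal{E}^{p_0}(\Omega)\subset\mathcal{E}^{p}(\Omega)$ (splitting $\int_0^{+\infty} t^{n+p-1}\capa(\varphi<-t)\,dt$ at $t=1$ in the capacity characterization of $\mathcal{E}^p(\Omega)$, the part near $0$ being controlled by $\capa(\Omega)$), bound $E_p(\varphi)\leq E_{p_0}(\varphi)^{p/p_0}$ by Jensen's inequality for the probability measure $\mu$, and use $|\varphi|^p\leq 1+|\varphi|^{p_0}$ to transfer the exponential integrability.

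The core difficulty is that the energy estimate is self-referential: after dividing by $E_{p_0}(\varphi)^{1/n}$ it only bounds $E_{p_0}(\varphi)^{1-1/n}$, so it cannot be applied to $\varphi$ before knowing $E_{p_0}(\varphi)<+\infty$. I would resolve this circularity by truncating the density. Set $g_j:=\min(f,j)$, so that $g_j\nearrow f$ and $\int_\Omega g_j\,dV\leq 1$; since the entropy function $\chi(s)=(s+1)\log(s+1)-s$ is increasing, $\int_\Omega \chi(g_j)\,dV\leq \int_\Omega \chi(f)\,dV=:B_0<+\infty$ (finite because $\mu$ has finite entropy and $\Omega$ is bounded). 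By Cegrell's solvability theory \cite{Ceg98,Ceg04} the bounded densities $g_j$ admit bounded solutions $u_j\in\mathcal{T}(\Omega)$ with $(dd^c u_j)^n=g_j\,dV$; in particular $E_{p_0}(u_j)<+\infty$ and $u_j$ is non-constant for $j$ large.

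For the uniform bound I would apply Young's inequality \eqref{eq:conj} with $s=g_j(x)$ and $t=c\,E_{p_0}(u_j)^{-1/n}(-u_j(x))^{p_0}$ and integrate, using $1+\frac{p_0}{n}=p_0$:
\[
\int_\Omega c\,\frac{(-u_j)^{p_0}}{E_{p_0}(u_j)^{1/n}}\,g_j\,dV \leq \int_\Omega \chi^*\!\left(c\,\frac{(-u_j)^{p_0}}{E_{p_0}(u_j)^{1/n}}\right) dV + \int_\Omega \chi(g_j)\,dV .
\]
The left-hand side equals $c\,E_{p_0}(u_j)^{1-\frac1n}$. On the right, since $\chi^*(t)\leq e^t$, the first integral is at most $\int_\Omega \exp\!\big(c\,E_{p_0}(u_j)^{-1/n}(-u_j)^{p_0}\big)\,dV\leq C_\gamma$ by Theorem \ref{thm: complex MTA inequality}, provided $c$ is chosen in $(0,\tfrac{2n(n+1)}{n+p_0})$, and the second is at most $B_0$. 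Hence $E_{p_0}(u_j)\leq\big((C_\gamma+B_0)/c\big)^{n/(n-1)}=:C_2$ uniformly in $j$. To pass to the limit, note $g_j\leq g_{j+1}\leq f$ forces, via the comparison principle \cite[Lemma 4.4]{Ceg98}, $u_j\searrow$ and $u_j\geq\varphi$; the limit $u_\infty\in\mathcal{F}(\Omega)$ satisfies $(dd^c u_\infty)^n=f\,dV$ by continuity under decreasing limits, so $u_\infty=\varphi$ by uniqueness \cite[Theorem 4.5]{Ceg98}. Thus $\varphi\in\mathcal{E}^{p_0}(\Omega)$, and $E_{p_0}(u_j)\to E_{p_0}(\varphi)$ by \cite[Theorem 3.4]{BGZ09}, yielding $E_{p_0}(\varphi)\leq C_2$.

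Finally, now that $\varphi\in\mathcal{E}^{p_0}(\Omega)$ with finite energy, I would apply Theorem \ref{thm: complex MTA inequality} directly to $\varphi$: since $E_{p_0}(\varphi)^{-1/n}\geq C_2^{-1/n}$, the choice $c:=\gamma\,C_2^{-1/n}$ gives
\[
\int_\Omega e^{c|\varphi|^{p_0}}\,dV \leq \int_\Omega \exp\!\big(\gamma\,E_{p_0}(\varphi)^{-1/n}|\varphi|^{p_0}\big)\,dV \leq C_\gamma,
\]
which, combined with the reduction of the first paragraph, proves the energy and integrability bounds for every $0<p\leq\frac{n}{n-1}$. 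I expect the main obstacle to be exactly this self-referential energy estimate, and the truncation $g_j=\min(f,j)$ together with the limiting procedure (comparison, uniqueness, and continuity of $E_{p_0}$ along decreasing sequences) is the device that makes it rigorous.
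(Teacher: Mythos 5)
Your proposal is correct and follows essentially the same route as the paper's own proof: reduce to the critical exponent $p_0=\frac{n}{n-1}$ (where $1+\frac{p_0}{n}=p_0$), truncate the density as $\min(f,j)$, solve for bounded potentials via Cegrell, combine Theorem \ref{thm: complex MTA inequality} with the Young inequality \eqref{eq:conj} to get the uniform self-bounding estimate $E_{p_0}(u_j)^{1-1/n}\leq C$, pass to the limit using the comparison principle, uniqueness, and \cite[Theorem 3.4]{BGZ09}, and then reapply Theorem \ref{thm: complex MTA inequality} for the exponential bound. Your treatment of the subcritical range $0<p<p_0$ is slightly more detailed than the paper's one-line appeal to H\"older, but the substance is identical.
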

 
Let us stress that the RHS integral estimate has been obtained with completely different methods 
by Wang-Wang-Zhou \cite[Theorem 3.2]{WWZ20}.

  \begin{proof}
By H\"older inequality it suffices to prove the result for $p=\frac{n}{n-1}$. 
We approximate $f$ by $f_j := \min(f,j)$ and observe that $f_j dV\leq dd^c(bj^{1/n}|z|^2)^n$, 
where $b>0$ is a normalization constant such that $dV=b^n\,(dd^c|z|^2)^n$. 

Note that for each $j\in \N$, $f_j$ still has finite entropy and $\int_{\Omega} f_j dV<+\infty$. 
By \cite[Proposition 6.1]{Ceg98}
there exists a unique $\varphi_j \in \mathcal{F}^1(\Omega)\cap L^\infty(\Omega)$  such that  $(dd^c \varphi_j)^n = f_jdV$.
The comparison principle \cite[Theorem 4.5]{Ceg98}
insures that $j \mapsto \varphi_j$ is decreasing and
the same argument as in the proof of Theorem \ref{thm: complex MTA inequality}  shows that $\varphi_j$ decreases to $\varphi$. 
Now Theorem \ref{thm: complex MTA inequality} yields
\[
\int_{\Omega} \exp (\gamma  E_p(\varphi_j)^{-1/n} |\varphi_j|^p) dV  \leq C_1,
\]
where $C_1$ depends on $\gamma$. 
Applying H\"older-Young inequality \eqref{eq:conj}  we obtain
\begin{eqnarray*}
&&\int_{\Omega} \gamma E_p(\varphi_j)^{-1/n} |\varphi_j|^p f_j dV  \\
&&\leq \int_{\Omega} \left(e^{\gamma E_p(\varphi_j)^{-1/n} |\varphi|^p}- \gamma E_p(\varphi_j)^{-1/n} |\varphi_j|^p-1 \right)dV\\
&&\quad + \int_{\Omega} (f_j+1)\log (f_j+1) dV-\int_{\Omega} f_j dV
\leq C_2,	
\end{eqnarray*}
where $C_2$ depends on $C_1$ and $\Ent(f)$. 
Thus $\gamma E_p(\varphi_j)^{1-1/n} \leq C_2$, hence $E_p(\varphi_j) \leq C_3$ where $C_3$ depends on $C_1$ and $\Ent(f)$.  Using \cite[Theorem 3.4]{BGZ09} it thus follows that $E_p(\varphi)\leq C_3$. 
Combining the above  upper bound with Theorem \ref{thm: complex MTA inequality} yields
$$
 \int_\Omega \exp{(\gamma C_2^{-1/n} |\varphi|^p )}\, dV \leq C,
 $$
as desired.
\end{proof}  


We finally establish a local analogue of Theorem \ref{thm: compact injection}:

\begin{theorem}
Let $0\leq f_j$ be a sequence of densities with uniformly bounded entropy. 
Let $u_j \in \mathcal{F}^{\frac{n}{n-1}}(\Omega)$ be the unique solutions to $(dd^c u_j)^n = f_j dV$
and fix $0<p<\frac{n}{n-1}$.
There exist $u\in \mathcal{F}^p(\Omega)$ and a subsequence, still denoted by $u_j$, such that  $\|u_j-u\|_{L^1(\Omega)} \to 0$ and
\begin{equation}\label{eq: dp convergence local}
  \lim_{j\to +\infty}\int_{\Omega} |u_j-u|^p ((dd^c u_j)^n +(dd^c u)^n) = 0. 
\end{equation}
 In particular $(dd^c u_j)^n$ weakly converges to $(dd^c u)^n$. 
\end{theorem}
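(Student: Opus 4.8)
The plan is to reproduce, in the local setting, the strategy of Theorem \ref{thm: compact injection}, replacing the compact normalisations and the metric $d_p$ by their Cegrell-class counterparts. Write $\mu_j := (dd^c u_j)^n = f_j\,dV$ and $B := \sup_j \Ent(f_j) < +\infty$. First I would record the uniform estimates furnished by Theorem \ref{thm: entropy local}: since the entropies are uniformly bounded, there are $c,C_0>0$ independent of $j$ with
\[
E_{\frac{n}{n-1}}(u_j) \leq C_0 \quad\text{and}\quad \int_\Omega \exp\!\big(c\,|u_j|^{\frac{n}{n-1}}\big)\,dV \leq C_0 .
\]
A Jensen inequality for $x\mapsto x\log x$ against $dV/\mathrm{Vol}(\Omega)$ also bounds the masses $\int_\Omega f_j\,dV$ uniformly. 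The exponential bound forces $\sup_j\int_\Omega|u_j|\,dV<+\infty$, so the $u_j$ are a relatively compact family of negative psh functions; after passing to a subsequence I may assume $u_j\to u$ in $L^1_{\mathrm{loc}}(\Omega)$ and almost everywhere, the uniform mass and energy bounds giving equi-integrability up to $\partial\Omega$ so that the convergence holds in $L^1(\Omega)$. By Fatou the limit inherits $\int_\Omega\exp(c|u|^{\frac{n}{n-1}})\,dV\leq C_0$.

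Next I would upgrade $L^1$-convergence to convergence in capacity. Using $\sup_j E_{\frac{n}{n-1}}(u_j)\leq C_0$, the characterisation of $\mathcal{E}^p(\Omega)$ through $\int_0^{+\infty}t^{n+p-1}\capa(u<-t)\,dt$, and Lemma \ref{lem: capacity sublevel Ep} to control sublevel sets uniformly, a standard argument (monotone regularisation via the envelopes $(\sup_{k\geq j}u_k)^*$ together with the comparison principle) yields $u_j\to u$ in capacity and $u\in\mathcal{F}(\Omega)$. By Bedford–Taylor continuity of $(dd^c\cdot)^n$ along capacity-convergent sequences of uniformly bounded mass, $\mu_j$ converges weakly to $\mu:=(dd^c u)^n$. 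Lower semicontinuity of the entropy then gives $\Ent(\mu)\leq\liminf_j\Ent(f_j)\leq B$, so $\mu$ has finite entropy; by Cegrell's uniqueness in $\mathcal{F}(\Omega)$ and Theorem \ref{thm: entropy local} applied to $u$ we conclude $E_{\frac{n}{n-1}}(u)\leq C_0'$, and in particular $u\in\mathcal{F}^p(\Omega)$ for every $p<\frac{n}{n-1}$.

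It remains to prove the quantitative convergence \eqref{eq: dp convergence local}, which I would obtain by interpolating between the first and the critical power. For the critical exponent the bound
\[
\int_\Omega |u_j-u|^{\frac{n}{n-1}}\,d(\mu_j+\mu) \leq C
\]
holds uniformly: decompose $|u_j-u|^{\frac{n}{n-1}}\leq C_1\big(|u_j|^{\frac{n}{n-1}}+|u|^{\frac{n}{n-1}}\big)$ and estimate each term by \eqref{eq:conj}, using the finite entropy of $\mu_j$ and of $\mu$ together with the uniform exponential integrability of $u_j$ and $u$, exactly as in the proof of Theorem \ref{thm: entropy local}. Granting this, Hölder's inequality reduces \eqref{eq: dp convergence local} to the $d_1$-type statement $\int_\Omega|u_j-u|\,d(\mu_j+\mu)\to 0$: for $p\leq 1$ one gets \eqref{eq: dp convergence local} directly by Jensen, while for $1<p<\frac{n}{n-1}$ one interpolates with exponents $r$ and $1-r$ chosen so that $r+(1-r)\frac{n}{n-1}=p$, the critical factor staying bounded and the linear factor tending to $0$.

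The hard part will be precisely this linear convergence $\int_\Omega|u_j-u|\,d(\mu_j+\mu)\to 0$. The fixed-measure part $\int_\Omega|u_j-u|\,d\mu\to 0$ is handled as in Proposition \ref{prop: Lmu continuous}: apply \eqref{eq:conj} with $s$ the density of $\mu$ and $t=c|u_j-u|$ to produce an $L^1(dV)$ dominating function and invoke dominated convergence. The delicate term is the moving-measure one, $\int_\Omega|u_j-u|\,d\mu_j\to 0$. Here I expect to use that the uniform entropy bound makes the densities $f_j$ uniformly integrable (de la Vallée Poussin), so that for each threshold $R$ the truncation $\int_\Omega(|u_j-u|\wedge R)\,d\mu_j\to 0$ by Egorov and uniform absolute continuity of the $\mu_j$, while the tail $\int_{\{|u_j-u|>R\}}|u_j-u|\,d\mu_j$ is controlled, uniformly in $j$ and small for $R$ large, by combining \eqref{eq:conj} with the exponential integrability (which renders $\{|u_j-u|>R\}$ of exponentially small volume) and the entropy bound on $f_j$. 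Making this tail estimate uniform in $j$ is the only genuinely new difficulty compared with the compact case, where it was supplied by the compactness of entropy sublevels in $\mathcal{E}^1(X,\omega)$ from \cite[Theorem 2.17]{BBEGZ19} and the comparison \eqref{comparison}; alternatively it can be deduced from a local compactness-in-energy statement playing the role of that result. Once the linear convergence is in hand, \eqref{eq: dp convergence local} follows by interpolation, and the weak convergence $\mu_j\to\mu$ is exactly what was already established above.
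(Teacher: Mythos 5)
Your architecture (uniform bounds from Theorem \ref{thm: entropy local}, a uniform critical-exponent bound, interpolation down to a linear term handled by Egorov) is close in spirit to the paper's, but two of your steps have genuine gaps, and they are exactly the load-bearing ones. The first is the step you yourself flag: the uniform tail estimate for $\int_{\{|u_j-u|>R\}}|u_j-u|\,d\mu_j$ is left as an expectation, and the route you sketch does not work as stated. Applying \eqref{eq:conj} on $E_{j,R}:=\{|u_j-u|>R\}$ produces the term $\int_{E_{j,R}}\chi(f_j)\,dV$, and making this small uniformly in $j$ on sets of exponentially small volume would require uniform integrability of $f_j\log f_j$, not of $f_j$: the entropy bound gives de la Vall\'ee Poussin control of $f_j$ only, so this term is merely bounded, not small. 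The gap is repairable in one line by Chebyshev using the critical bound from your own third step: with $r=\frac{n}{n-1}$, one has $\int_{\{|u_j-u|>R\}}|u_j-u|\,d\mu_j\le R^{1-r}\int_\Omega|u_j-u|^r\,d\mu_j\le CR^{1-r}$. (The paper avoids truncation in $|u_j-u|$ altogether: it applies Egorov, bounds $\int_G|u_j-u|^pf_j\,dV$ by H\"older against the critical bound, and makes $\int_Gf_j\,dV$ small by splitting at $\{f_j\le t\}$ and using the entropy bound on $\{f_j>t\}$.)

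The second gap is more serious: your scheme needs $(dd^cu)^n$ to be absolutely continuous with finite entropy from the very start (both for the H\"older--Young critical bound against $\mu$ and for the dominated-convergence argument on the fixed-measure term), and you obtain this from an early weak convergence $\mu_j\to(dd^cu)^n$, itself derived from a claimed convergence in capacity. That claim is not a ``standard argument'': the envelopes $v_k=(\sup_{j\ge k}u_j)^*$ only control $\capa(\{u_j>u+\delta\})$, while the opposite bound $\capa(\{u_j<u-\delta\})\to 0$ requires a comparison-principle capacity estimate of the form $t^n\capa(\{u_j<u-2t\})\lesssim\int_{\{u_j<u-t\}}f_j\,dV$ combined with uniform integrability of $f_j$, none of which you supply; moreover Bedford--Taylor convergence along capacity-convergent sequences of \emph{unbounded} potentials needs additional hypotheses. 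The paper dissolves this circularity by reordering the proof: the critical bound $\int_\Omega|u_j-u|^r\,((dd^cu_j)^n+(dd^cu)^n)\le C$ is obtained from Persson's inequality \cite{Per99}, $\int_\Omega|\varphi|^r(dd^c\psi)^n\le D_{n,r}E_r(\varphi)^{r/(n+r)}E_r(\psi)^{n/(n+r)}$, which uses only the uniform energy bounds and nothing about the entropy of $(dd^cu)^n$; the convergence against the moving measures $\mu_j$ is proved first, the weak convergence $(dd^cu_j)^n\to(dd^cu)^n$ then follows from \cite[Lemma 5.3]{Ceg98}, lower semicontinuity of the entropy then shows $(dd^cu)^n$ has finite entropy, and only at that point is the argument repeated for the fixed measure. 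If you replace your H\"older--Young critical bound by Persson's inequality and adopt this ordering, both gaps in your proposal disappear.
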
 

 Example \ref{ex: inj_nocompact} shows that \eqref{eq: dp convergence local} does not hold for $p=\frac{n}{n-1}$.
 
 \begin{proof}
 Fix $0<p< r:=\frac{n}{n-1}$.
 It follows from Theorem \ref{thm: entropy local} that $E_{r}(u_j)$ and  $\int_{\Omega}e^{c|u_j|^r}$ are uniformly bounded.
 Thus up to extracting and relabelling, $u_j$ converges in $L^{1}_{{\rm loc}}$ and a.e. to $u\in \PSH(\Omega)$.  If we set $v_k:= (\sup_{j\geq k} u_j)^*$ then $v_k \searrow u$ and by Lemma \ref{lem: Ep inequality} $E_r(v_k) \leq 2^{n+r}E_r(u_k)$ is uniformly bounded.  It then follows from \cite[Theorem 3.4]{BGZ09} that $u\in \mathcal{F}^r(\Omega)$. The H\"older inequality then ensures that $u\in \mathcal{F}^p(\Omega)$ for any $p<r$. It follows from Theorem \ref{thm: complex MTA inequality} that, for some constant $\gamma>0$, $\int_{\Omega} (e^{\gamma |u_j|^r} +e^{\gamma |u|^r} )dV$ is uniformly bounded. It then follows that $\int_{\Omega}|u_j-u| dV \to 0$.
 

 It follows from \cite[Theorem 3.4]{Per99} that for all $\varphi,\psi \in \mathcal{T}(\Omega)$, 
\begin{equation*}
\int_{\Omega} |\varphi|^r (dd^c \psi)^n \leq D_{n,r} E_r(\varphi)^{r/(n+r)} E_r(\psi)^{n/(n+r)}.
\end{equation*}
By an approximation procedure one can show that the above inequality holds for $\varphi,\psi \in \mathcal{F}^r(\Omega)$ as well. 
Using this for $u_j$ and $u$ we conclude that $\int_{\Omega} |u_j-u|^r ((dd^c u_j)^n+ (dd^c u)^n)$ is uniformly bounded.

We next prove that $\int_{\Omega} |u_j-u|^p(dd^c u_j)^n \to 0$. Fixing $\varepsilon>0$, by Egorov's theorem there exists a Borel subset $G\subset \Omega$ with $ {\rm Vol}(G)<\varepsilon$ such that $u_j$ converges uniformly to $u$ in $\Omega\setminus G$. We then have 
\[
\int_{\Omega \setminus G} |u_j-u|^p f_j dV  \to 0
\]    
since $\int_{\Omega} (dd^c u_j)^n dV$ is uniformly bounded. By H\"older's inequality we have 
\[
\int_{G} |u_j-u|^pf_j  dV  \leq  \left (\int_{G} |u_j-u|^r f_j dV \right)^{p/r}  \left( \int_{G}f_j dV \right)^{q} 
\]
with $q:= \frac{r}{r-p}$. The first factor is uniformly bounded thanks to the above. Fix now $t>1$. We estimate the second factor as follows: 
\[
\int_{G \cap \{f_j \leq t\}} f_j dV \leq t  {\rm Vol}(G) \leq t \varepsilon,
\]
\[
\int_{G\cap \{f_j > t\}} f_j dV \leq \frac{1}{\log t} \int_{\Omega} f_j \log f_j dV \leq \frac{C}{\log t}.
\]
Letting $\varepsilon\to 0$ and then $t\to +\infty$ we see that $\int_{\Omega} |u_j-u|^p (dd^c u_j)^n \to 0$. 
From this and \cite[Lemma 5.3]{Ceg98} we deduce that $(dd^c u_j)^n$ weakly converges to $(dd^c u)^n$. Note that in \cite[Lemma 5.3]{Ceg98}  it was assumed that $u_j\in \mathcal{T}(\Omega)$ is continuous for all $j$ but the proof does apply to our more general setting.  This together with  lower semicontinuity of the entropy reveal that $(dd^c u)^n$ also has finite entropy. We can thus repeat the same arguments as above to conclude that  $\int_{\Omega} |u_j-u|^p (dd^c u)^n \to 0$, finishing the proof.
\end{proof}


\end{document}